\documentclass[12pt,a4paper]{book}
\usepackage{amsmath,amssymb,amsthm}
\usepackage{geometry}
\usepackage{setspace}
\usepackage{graphicx}
\usepackage{longtable}
\usepackage{float}
\usepackage{array, booktabs, float} 
\usepackage{lmodern}
\usepackage[T1]{fontenc}
\usepackage[utf8]{inputenc}
\usepackage[ngerman]{babel}
\usepackage{mathrsfs}
\usepackage{hyperref}
\usepackage{babel}
\usepackage{textcomp}
\usepackage{enumitem}

\usepackage{fancyhdr}
\pagestyle{fancy}
\fancyhf{}
\fancyhead[LE,RO]{\nouppercase{\leftmark}}
\fancyfoot[C]{\thepage}

\renewcommand{\sectionmark}[1]{}

\usepackage{tocloft}
\setlength{\cftsecnumwidth}{2.5em}
\setlength{\cftsubsecnumwidth}{3.5em}

\usepackage{etoolbox}
\usepackage{listings}
\usepackage{xcolor}

\lstset{
	inputencoding=utf8,
	extendedchars=true,
	literate={ö}{{\"o}}1 {ä}{{\"a}}1 {ü}{{\"u}}1 {ß}{{\ss}}1,
	basicstyle=\ttfamily\footnotesize,
	keywordstyle=\color{blue},
	commentstyle=\color{gray},
	stringstyle=\color{orange},
	breaklines=true,
	frame=single,
	numbers=left,
	numberstyle=\tiny,
	captionpos=b
}
\makeatletter
\patchcmd{\@dottedtocline}{\hbox to \@pnumwidth{}}{\hfil}{}{}
\makeatother

\geometry{left=3.5cm,right=3cm,top=3cm,bottom=3cm}
\onehalfspacing
\setlength{\parindent}{0pt}

\title{\textbf{Ein reguliertes Flächenintegral als Entscheidungskriterium\\für die Riemannsche Vermutung}}
\author{Dennis-Magnus Welz}
\date{\today}
\newtheorem{Satz}{Satz}[section]
\newtheorem{Definition}[Satz]{Definition}
\newtheorem{Vermutung}[Satz]{Vermutung}
\newtheorem{Theorem}[Satz]{Theorem}
\newtheorem{Korollar}[Satz]{Korollar}
\newtheorem{Lemma}[Satz]{Lemma}

\theoremstyle{remark}
\newtheorem*{Bemerkung}{Bemerkung}

\begin{document}
	\maketitle 
	\vspace{2cm}	
		\begin{center}
		\large
		\textit{Mathematische Beweisarbeit}\\[0.3cm]
		\textit{zur vollständigen Charakterisierung der Nullstellen\\der Riemannschen Zetafunktion}\\[1.5cm]
		
		Eingereicht zur Begutachtung an der\\
		\textbf{Technischen Hochschule Ulm}\\[3cm]
	\end{center}
	\pagenumbering{roman} 
	\setcounter{page}{1}  
	
	\begin{flushright}
		\textbf{Dennis-Magnus Welz}\\
		\texttt{dennismagnuswelz@gmail.com}
	\end{flushright}

	\newpage

	\section*{Zusammenfassung}
	\addcontentsline{toc}{chapter}{Zusammenfassung}
	Diese Arbeit entwickelt und beweist ein neuartiges, vollständig analytisches Kriterium für die Riemannsche Vermutung (RH), basierend auf einem \emph{regulierten, normierten und sensitiv gewichteten Flächenintegral} im kritischen Streifen. 
	Der Integrand 
	\[
	J_C(s) := \frac{1}{|\zeta(s)|^{\lambda} \, |\Re(s) - \tfrac12|^p},
	\quad \lambda \ge 2,\; 0 < p < 1,
	\]
	verstärkt gezielt jede Abweichung einer Nullstelle von der kritischen Linie und wird über einen regulierten Bereich integriert, aus dem kleine Umgebungen aller Nullstellen und der Polstelle entfernt sind. 
	Es wird streng gezeigt:
	\[
	\lim_{R \to \infty} W(R) < \infty \quad \Longleftrightarrow \quad \text{RH gilt}.
	\]
	Die Analyse umfasst zwei Hauptschritte: 
	\begin{enumerate}
	\item eine vollständige Fallunterscheidung sämtlicher geometrisch möglicher Nullstellenkonfigurationen, die belegt, dass jede Abweichung von RH zur Divergenz führt, 
	\item ein analytischer Nachweis der tatsächlichen Konvergenz von $W(R)$ unter ausschließlicher Verwendung exakter Darstellungen von $\zeta(s)$ (alternierende Dirichletreihe, regulierte Mellin-Darstellung).
	\end{enumerate}
	Die Methode ist \emph{methodisch unabhängig} von spektralen, statistischen oder numerischen Ansätzen und benötigt keine Kenntnis der exakten Nullstellen.
	Darüber hinaus ist das Kriterium auf andere Zeta- und L‑Funktionen übertragbar und eröffnet Perspektiven für eine geometrisch motivierte Symmetrietheorie analytischer Nullstellenverteilungen.

\newpage

	\section*{Abstract}
	\addcontentsline{toc}{chapter}{Abstract}
		This work develops and proves a novel, fully analytical criterion for the Riemann Hypothesis (RH), based on a \emph{regulated, normalized, and sensitivity-weighted area integral} in the critical strip. 
		The integrand
		\[
		J_C(s) := \frac{1}{|\zeta(s)|^{\lambda} \, |\Re(s) - \tfrac12|^p},
		\quad \lambda \ge 2,\; 0 < p < 1,
		\]
		is specifically designed to amplify any deviation of a zero from the critical line and is integrated over a regulated domain from which small neighborhoods of all zeros and of the pole are removed. 
		It is rigorously shown that
		\[
		\lim_{R \to \infty} W(R) < \infty \quad \Longleftrightarrow \quad \text{RH holds}.
		\]
		The analysis consists of two main steps:
		\begin{enumerate}
			\item a complete case distinction of all geometrically possible zero configurations, proving that any violation of RH leads to divergence,
			\item an analytical proof of the actual convergence of \( W(R) \) using only exact representations of \( \zeta(s) \) (alternating Dirichlet series, regulated Mellin representation).
		\end{enumerate}
		The method is \emph{methodologically independent} of spectral, statistical, or numerical approaches and does not require knowledge of the exact zeros.
		Moreover, the criterion can be transferred to other zeta and L-functions and opens perspectives for a geometrically motivated symmetry theory of analytic zero distributions.

	\newpage
	
\tableofcontents

\clearpage
\pagenumbering{arabic} 
\setcounter{page}{1}   
	
	\chapter{Einleitung und Motivation}

Die Riemannsche Vermutung (RH) zählt zu den bekanntesten und zugleich ältesten
offenen Problemen der Mathematik. Sie besagt, dass alle nichttrivialen Nullstellen
der Riemannschen Zetafunktion
\[
\zeta(s) = \sum_{n=1}^{\infty} \frac{1}{n^s}, \quad \Re(s) > 1,
\]
in ihrer analytischen Fortsetzung exakt auf der kritischen Linie
$\Re(s) = \tfrac12$ liegen.

Diese Arbeit entwickelt und beweist ein neuartiges, \emph{vollständig analytisches}
Kriterium für RH, das auf einem \emph{regulierten, normierten und sensitiv
	gewichteten Flächenintegral} im kritischen Streifen basiert.
Der zugrunde liegende Integrand
\[
J_C(s) := \frac{1}{|\zeta(s)|^{\lambda} \, |\Re(s) - \tfrac12|^p},
\quad \lambda \ge 2,\; 0 < p < 1,
\]
verstärkt gezielt jede Abweichung einer Nullstelle von der kritischen Linie.
Das Integral wird über einen regulierten Bereich gebildet, in dem kleine Umgebungen
sämtlicher Nullstellen und der Polstelle $s = 1$ entfernt sind.
Diese Regulierung stellt sicher, dass der Integrand wohldefiniert und lokal integrierbar
ist, ohne dass die exakten Nullstellenpositionen bekannt sein müssen.

Die zentrale Aussage dieser Arbeit lautet:
\[
\lim_{R \to \infty} W(R) < \infty
\quad\Longleftrightarrow\quad \text{RH gilt}.
\]
Der Beweis erfolgt in zwei komplementären Hauptschritten:
\begin{enumerate}
	\item \textbf{Notwendigkeit:} Eine vollständige Fallunterscheidung aller
	geometrisch möglichen Nullstellenkonfigurationen zeigt, dass jede Abweichung
	von der kritischen Linie – sei sie noch so gering – zu einer Divergenz des Integrals
	im Grenzfall $R \to \infty$ führt.
	\item \textbf{Tatsächliche Konvergenz:} Es wird \emph{ohne jede Vorannahme über die
		Lage der Nullstellen} streng analytisch nachgewiesen, dass $W(R)$ im Limes
	$R \to \infty$ endlich bleibt. Dies geschieht ausschließlich unter Verwendung
	exakter Darstellungen der Zetafunktion (alternierende Dirichletreihe, regulierte
	Mellin-Darstellung) und unter vollständiger Kontrolle aller auftretenden
	Singularitäten.
\end{enumerate}

Das Integralmodell ist \emph{methodisch unabhängig} von spektralen, statistischen
oder numerischen Ansätzen und benötigt keine Vorkenntnis der Nullstellen.
Seine Konstruktion ist robust, global definiert und auf andere Zeta- und
$L$‑Funktionen übertragbar.
Damit wird die Riemannsche Vermutung zu einer reinen Frage der
Integralkonvergenz: Jede Abweichung von der kritischen Linie manifestiert sich
unvermeidlich als Divergenz, während vollständige Konvergenz genau der Gültigkeit
von RH entspricht.

Der Aufbau dieser Arbeit ist wie folgt:
Kapitel~2 fasst die mathematischen Grundlagen und analytischen Darstellungen der
Zetafunktion zusammen.
Kapitel~3 definiert das Integralmodell und den regulierten Bereich.
Kapitel~4 enthält die vollständige Fallunterscheidung sämtlicher Nullstellenkonfigurationen.
Kapitel~5 liefert den strengen analytischen Konvergenzbeweis ohne Vorannahme von RH.
Kapitel~6 verknüpft beide Resultate zu einem formalen Beweis der Riemannschen Vermutung.
Kapitel~7 skizziert weiterführende Anwendungen, Verallgemeinerungen und Ausblicke.

	\chapter{Mathematische Grundlagen}
	\label{chap:grundlagen}

\section{Definition und analytische Darstellungen der Riemannschen Zetafunktion}

Die Riemannsche Zetafunktion $\zeta(s)$ ist die zentrale Untersuchungsgröße dieser Arbeit.
In diesem Abschnitt werden sämtliche Darstellungen der Funktion, die für den späteren Beweis relevant sind, systematisch eingeführt.
Zu jeder Form werden Gültigkeitsbereich, analytische Eigenschaften und Literaturhinweise angegeben.

\subsection{Dirichletreihe und Konvergenz}

\begin{Definition}[Klassische Dirichletreihe {\footnotesize~\cite[Kap.~1.1, S.~1--2]{Titchmarsh1986}; %
		\cite[Theorem~11.1, S.~260]{Apostol1976}}]
	Für $s \in \mathbb{C}$ mit $\Re(s) > 1$ ist die Riemannsche Zetafunktion definiert durch die absolut konvergente Reihe
	\[
	\zeta(s) := \sum_{n=1}^{\infty} \frac{1}{n^s}.
	\]
\end{Definition}

\begin{Lemma}
	Die Dirichletreihe für $\zeta(s)$ konvergiert absolut und gleichmäßig auf kompakten Teilmengen der Halbebene $\Re(s) > 1$ und definiert dort eine holomorphe Funktion.{\footnotesize~\cite[Theorem~11.1, S.~260--261]{Apostol1976}; %
		\cite[Lemma~1.1, S.~2--3]{Titchmarsh1986}}
\end{Lemma}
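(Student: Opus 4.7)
Der Plan gliedert sich in drei klassische Schritte: zuerst absolute punktweise Konvergenz durch Rückführung auf die reelle $p$-Reihe, dann gleichmäßige Konvergenz auf kompakten Teilmengen mittels Weierstraßschem Majorantenkriterium, und abschließend die Holomorphie mithilfe des Weierstraßschen Konvergenzsatzes. Eine eigentliche Hürde ist nicht zu erwarten, da sämtliche Schritte Standardrepertoire der analytischen Zahlentheorie sind.

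Im ersten Schritt würde ich die Identität $|n^{-s}| = |e^{-s \log n}| = e^{-\Re(s)\log n} = n^{-\Re(s)}$ verwenden. Damit ist die absolute Konvergenz der Dirichletreihe an einem festen Punkt $s$ mit $\Re(s) > 1$ äquivalent zur Konvergenz der Majorante $\sum_{n=1}^\infty n^{-\Re(s)}$, die als $p$-Reihe mit $p = \Re(s) > 1$ nach dem Integralkriterium konvergiert.

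Im zweiten Schritt sei $K \subset \{s \in \mathbb{C} : \Re(s) > 1\}$ kompakt. Da $s \mapsto \Re(s)$ stetig ist, nimmt diese Abbildung auf $K$ ihr Minimum an; ich bezeichne dieses mit $\sigma_0 > 1$. Für alle $s \in K$ und alle $n \in \mathbb{N}$ gilt dann die gleichmäßige Majorantenabschätzung
\[
\bigl|n^{-s}\bigr| = n^{-\Re(s)} \le n^{-\sigma_0}.
\]
Da $\sum_{n=1}^\infty n^{-\sigma_0}$ nach dem ersten Schritt konvergiert, liefert das Weierstraßsche Majorantenkriterium die gleichmäßige (und absolute) Konvergenz der Dirichletreihe auf $K$.

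Für die Holomorphie würde ich abschließend beobachten, dass jede Partialsumme $s \mapsto \sum_{n=1}^N n^{-s}$ als endliche Linearkombination ganzer Funktionen selbst ganz ist. Nach dem Weierstraßschen Konvergenzsatz überträgt sich die Holomorphie auf den Grenzwert, sofern die Konvergenz lokal gleichmäßig vorliegt — und genau dies wurde im vorherigen Schritt etabliert, da jede offene, relativ kompakte Umgebung eines Punktes aus $\{\Re(s) > 1\}$ in einer kompakten Teilmenge dieser Halbebene enthalten ist. Als einzigen potenziellen Stolperstein sehe ich die saubere Trennung zwischen punktweiser, absoluter und lokal gleichmäßiger Konvergenz; diese Unterscheidung wird jedoch durch die explizite Wahl der Majorante $n^{-\sigma_0}$ transparent.
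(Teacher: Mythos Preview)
Your proof is correct and follows essentially the same approach as the paper: the identity $|n^{-s}| = n^{-\Re(s)}$, convergence of the real $p$-series via the integral criterion, and the Weierstra{\ss}sche Majorantenkriterium for uniform convergence on compacta. Your version is in fact more complete than the paper's, since you make the choice of the uniform majorant $n^{-\sigma_0}$ explicit and supply the holomorphy argument via the Weierstra{\ss}scher Konvergenzsatz, which the paper's proof leaves implicit.
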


\begin{proof}
	Für $\Re(s) > 1$ gilt
	\[
	\left|\frac{1}{n^s}\right| = \frac{1}{n^{\Re(s)}}
	\]
	und die Reihe $\sum_{n=1}^\infty n^{-\sigma}$ konvergiert für $\sigma > 1$ (Vergleich mit dem Integralkriterium).
	Die gleichmäßige Konvergenz auf kompakten Teilmengen folgt aus dem Weierstraßschen Majorantenkriterium.
\end{proof}

\subsection{Euler-Produktdarstellung} 

\begin{Satz}[Euler-Produkt {\footnotesize~\cite[Theorem~11.2, S.~261--262]{Apostol1976}; %
		\cite[Satz~1.2, S.~2--4]{Titchmarsh1986}; %
		\cite{Euler1737}}]
	Für $\Re(s) > 1$ gilt
	\[
	\zeta(s) = \prod_{p \ \mathrm{prim}} \left( 1 - \frac{1}{p^s} \right)^{-1},
	\]
	wobei das Produkt über alle Primzahlen $p$ genommen wird.
\end{Satz}

\begin{proof}
	Die Darstellung folgt aus der geometrischen Reihenentwicklung
	\[
	\frac{1}{1 - p^{-s}} = 1 + p^{-s} + p^{-2s} + \dots
	\]
	und der eindeutigen Primfaktorzerlegung jeder natürlichen Zahl.
	Multipliziert man über alle Primzahlen, so entsteht die Dirichletreihe von $\zeta(s)$.
\end{proof}

\begin{Korollar}
	Die Funktion $\zeta(s)$ ist in der Halbebene $\Re(s) > 1$ holomorph und besitzt dort keine Nullstellen.
\end{Korollar}

\subsection{Analytische Fortsetzung und Laurent-Entwicklung}

\begin{Satz}[Analytische Fortsetzung, Riemann 1859 {\footnotesize~\cite[Kap.~2.1--2.4, S.~13--19]{Titchmarsh1986}; %
		\cite[Kap.~1--2, S.~1--16]{Edwards2001}; %
		\cite[Theorem~12.3, S.~271--272]{Apostol1976}; %
		\cite{Riemann1859}}]
	Die Zetafunktion $\zeta(s)$ lässt sich eindeutig zu einer meromorphen Funktion auf $\mathbb{C}$ fortsetzen mit genau einer einfachen Polstelle bei $s = 1$.
	In einer Umgebung von $s = 1$ gilt die Laurent-Entwicklung
	\[
	\zeta(s) = \frac{1}{s-1} + \gamma + \sum_{n=1}^\infty \frac{(-1)^n \gamma_n}{n!} (s-1)^n,
	\]
	wobei $\gamma$ die Euler–Mascheroni-Konstante und $\gamma_n$ die Stieltjes-Konstanten sind.
\end{Satz}

\begin{proof}[Beweisskizze]
	Riemann benutzte den Mellin-Transform der Jacobi-Theta-Funktion und deren Transformationseigenschaften, um $\zeta(s)$ analytisch auf $\mathbb{C} \setminus \{1\}$ fortzusetzen.
	Die Laurent-Entwicklung folgt aus der Residuenrechnung um $s = 1$.
\end{proof}

\subsection{Funktionalgleichung}

\begin{Satz}[Funktionalgleichung{\footnotesize~\cite[Kap.~2.8--2.10, S.~24--28]{Titchmarsh1986}; %
		\cite[Kap.~3, S.~24--33]{Edwards2001}; %
		\cite[Proposition~5.1, S.~93--94]{IwaniecKowalski2004}}]
	Für alle $s \in \mathbb{C} \setminus \{0,1\}$ gilt
	\[
	\zeta(s) = \chi(s) \, \zeta(1-s),
	\]
	mit
	\[
	\chi(s) = 2^s \pi^{\,s-1} \sin\left( \frac{\pi s}{2} \right) \Gamma(1-s).
	\]
\end{Satz}

\begin{proof}[Beweisskizze]
	Die Funktionalgleichung wird durch Anwendung des Poisson’schen Summationssatzes auf die Theta-Reihe
	\[
	\theta(x) = \sum_{n=-\infty}^{\infty} e^{-\pi n^2 x}
	\]
	und deren Mellin-Transform hergeleitet.
\end{proof}

\begin{Korollar}[Symmetrie der Nullstellen]
	Ist $\rho$ eine Nullstelle von $\zeta(s)$, so sind auch $1-\rho$, $\overline{\rho}$ und $1-\overline{\rho}$ Nullstellen.
\end{Korollar}

\subsection{Alternierende Dirichletreihe}

\begin{Satz}[Alternierende Dirichletreihe
 {\footnotesize~\cite[S.~269--270]{Apostol1976}; %
		\cite[Kap.~2.6, S.~21--23]{Titchmarsh1986}}]
		\label{Satz:Dirichlet}
	Für $\Re(s) > 0$, $s \neq 1$, gilt
	\[
	\zeta(s) = \frac{1}{1 - 2^{1-s}} \sum_{n=1}^{\infty} \frac{(-1)^{n+1}}{n^s}.
	\]
\end{Satz}

\begin{proof}
	Für $\Re(s) > 1$ folgt die Gleichung durch Umordnung der klassischen Dirichletreihe:
	\[
	\sum_{n=1}^{\infty} \frac{1}{n^s} - 2 \sum_{n=1}^{\infty} \frac{1}{(2n)^s} = \sum_{n=1}^\infty \frac{(-1)^{n+1}}{n^s}.
	\]
	Die rechte Seite definiert eine Funktion, die sich holomorph auf $\Re(s) > 0$ fortsetzen lässt.
	Der Vorfaktor $(1-2^{1-s})^{-1}$ besitzt in $\Re(s) > 0$ keine Nullstellen außer bei $s=1$, wo $\zeta$ eine einfache Polstelle hat.
\end{proof}

\subsection{Regulierte Mellin-Darstellung}

\begin{Satz}[Regulierte Mellin-Darstellung {\footnotesize~\cite[Kap.~2.2, S.~14--16]{Titchmarsh1986}; %
		\cite[Kap.~2, S.~14--16]{Edwards2001}}]
		\label{Satz:Mellin}
	Für $\Re(s) > 0$ gilt
	\[
	\zeta(s) = \frac{1}{\Gamma(s)}
	\left[ \int_0^1 \left( \frac{1}{e^x - 1} - \frac{1}{x} \right) x^{s-1} \, dx
	+ \int_1^\infty \frac{x^{s-1}}{e^x - 1} \, dx \right].
	\]
\end{Satz}

\begin{proof}[Beweisskizze]
	Ausgehend von der klassischen Mellin-Darstellung
	\[
	\zeta(s) \Gamma(s) = \int_0^\infty \frac{x^{s-1}}{e^x - 1} \, dx
	\]
	für $\Re(s) > 1$ subtrahiert man im Integral über $(0,1)$ den singulären Term $x^{-1}$ und addiert ihn analytisch getrennt wieder hinzu.
	Dies entfernt die Divergenz bei $x=0$ und erlaubt die Fortsetzung auf $\Re(s) > 0$.
\end{proof}

\begin{Bemerkung}
	Diese Form wird in Kapitel~5 genutzt, um die Integrabilität des Integranden $|\zeta(s)|^{-\lambda}$ im kritischen Streifen zu kontrollieren.
\end{Bemerkung}

	\section{Nullstellenstruktur der Riemannschen Zetafunktion}
	
	In diesem Abschnitt wird die Struktur der Nullstellen von $\zeta(s)$ vollständig beschrieben.
	Wir unterscheiden zwischen trivialen und nichttrivialen Nullstellen, formulieren die Riemannsche Vermutung,
	beweisen Symmetrieeigenschaften, Diskretheit, das Vorhandensein einer Polstelle bei $s=1$, geben eine nullstellenfreie Region an
	und beschreiben das asymptotische Wachstumsverhalten der Nullstellenanzahl.
	
	\subsection{Triviale Nullstellen}
	
	\begin{Satz}[Triviale Nullstellen{\footnotesize~\cite[Kap.~2.11, S.~30--31]{Titchmarsh1986}; %
			\cite[Kap.~3, S.~29--31]{Edwards2001}}]
		Die Zetafunktion besitzt einfache Nullstellen bei den negativen geraden Zahlen
		\[
		s = -2n, \quad n \in \mathbb{N}.
		\]
	\end{Satz}
	
	\begin{proof}
		Aus der Funktionalgleichung
		\[
		\zeta(s) = \chi(s)\,\zeta(1-s), \quad \chi(s) = 2^s \pi^{\,s-1} \sin\left(\frac{\pi s}{2}\right) \Gamma(1-s),
		\]
		folgt: Für $s = -2n$ gilt $\sin(\frac{\pi s}{2}) = \sin(-n\pi) = 0$.
		Da $\Gamma(1-s)$ und $\zeta(1-s)$ an diesen Stellen endlich sind, folgt $\zeta(-2n) = 0$.
		Die Einfachheit der Nullstellen ergibt sich aus der Einfachheit der Nullstellen des Sinus.
	\end{proof}
	
	\subsection{Nichttriviale Nullstellen und Riemannsche Vermutung}
	
	\begin{Definition}[Nichttriviale Nullstellen]
		\label{Definition:NichttrivialNS}
		Die nichttrivialen Nullstellen von $\zeta(s)$ sind die Nullstellen im kritischen Streifen
		\[
		S := \{ s \in \mathbb{C} \mid 0 < \Re(s) < 1 \}.
		\]
	\end{Definition}
	
	\begin{Vermutung}[Riemannsche Vermutung {\footnotesize~\cite{Riemann1859}; %
			\cite[Kap.~1, S.~2--3]{Edwards2001}; %
			\cite[Kap.~1.1, S.~1--2]{Titchmarsh1986}}]
		Alle nichttrivialen Nullstellen von $\zeta(s)$ liegen auf der kritischen Linie
		\[
		\Re(s) = \frac12.
		\]
	\end{Vermutung}
	
	\begin{Bemerkung}
		Es ist bekannt, dass unendlich viele nichttriviale Nullstellen auf der kritischen Linie liegen (Hardy 1914),
		jedoch ist weder ein vollständiger Beweis noch ein Gegenbeispiel zur Vermutung bekannt.
	\end{Bemerkung}
	
	\subsection{Symmetrie der Nullstellen}
	
	\begin{Satz}[Symmetrie{\footnotesize~\cite[Kap.~2.8, S.~24--26]{Titchmarsh1986}; %
			\cite[Kap.~3, S.~24--26]{Edwards2001}}]
		Ist $\rho$ eine nichttriviale Nullstelle von $\zeta(s)$, so sind auch
		\[
		1-\rho, \quad \overline{\rho}, \quad 1-\overline{\rho}
		\]
		Nullstellen von $\zeta(s)$.
	\end{Satz}
	
	\begin{proof}
		Folgt unmittelbar aus der Funktionalgleichung $\zeta(s) = \chi(s) \zeta(1-s)$,
		da $\chi(s)$ keine Nullstellen besitzt, sowie der Identität $\zeta(\overline{s}) = \overline{\zeta(s)}$ für reelles $t$.
	\end{proof}
	
	\begin{Korollar}
		\label{Korollar:Viererpaarbildung}
		Die nichttrivialen Nullstellen treten in \emph{Viererkonfigurationen}
		\[
		\{\rho, \overline{\rho}, 1-\rho, 1-\overline{\rho}\}
		\]
		auf. Liegt $\rho$ auf der kritischen Linie, so reduziert sich diese Menge auf das Paar $\{\rho, \overline{\rho}\}$.
	\end{Korollar}
	
	\subsection{Diskretheit der Nullstellen}
	
	\begin{Satz}[Diskrete Nullstellenstruktur{\footnotesize~\cite[Kap.~3.2, S.~58--59]{Titchmarsh1986}; %
			\cite[Kap.~3, S.~24--26]{Edwards2001}}]
		Die Menge der nichttrivialen Nullstellen von $\zeta(s)$ ist diskret in $\mathbb{C}$.
	\end{Satz}
	
	\begin{proof}
		Da $\zeta(s)$ meromorph ist mit genau einer Polstelle bei $s=1$, folgt aus der allgemeinen Theorie meromorpher Funktionen,
		dass ihre Nullstellen isoliert sind. Auf jeder kompakten Teilmenge existieren nur endlich viele Nullstellen.
	\end{proof}
	
	\subsection{Polstelle bei \texorpdfstring{$s=1$}{s=1}}
	
	\begin{Lemma}
		$\zeta(s)$ besitzt bei $s=1$ eine einfache Polstelle mit Residuum $1$. [{\footnotesize~\cite[Kap.~2.2, S.~14--16]{Titchmarsh1986}; %
			\cite[Kap.~2, S.~14--16]{Edwards2001}}]
	\end{Lemma}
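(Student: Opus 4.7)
Der Plan ist, die bereits etablierte alternierende Dirichletreihe aus Satz~\ref{Satz:Dirichlet} zu verwenden, welche das lokale Verhalten von $\zeta(s)$ bei $s=1$ unmittelbar sichtbar macht: Die Singularität konzentriert sich vollständig im Vorfaktor $(1-2^{1-s})^{-1}$, während die Dirichlet-Etareihe $\eta(s) := \sum_{n\ge 1}(-1)^{n+1} n^{-s}$ dort regulär bleibt. Alternativ könnte auch die regulierte Mellin-Darstellung aus Satz~\ref{Satz:Mellin} in Verbindung mit dem Verhalten von $1/\Gamma(s)$ verwendet werden; die Etareihen-Variante führt jedoch direkter zum Ergebnis und benötigt keine Zusatzrechnung mit Gammafunktionen.

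Im ersten Schritt würde ich durch Taylor-Entwicklung von $2^{1-s}=e^{(1-s)\ln 2}$ um $s=1$ feststellen, dass $1-2^{1-s}$ eine einfache Nullstelle mit führendem Koeffizienten $\ln 2$ besitzt. Damit liefert der Vorfaktor genau einen einfachen Pol, ohne weitere Singularitäten in einer Umgebung beizutragen. Im zweiten Schritt belege ich, dass $\eta(s)$ auf $\Re(s)>0$ holomorph ist und bei $s=1$ den aus der alternierenden harmonischen Reihe bekannten Wert $\eta(1)=\ln 2$ annimmt. Zusammensetzen ergibt
\[
\mathrm{Res}_{s=1}\,\zeta(s) \;=\; \lim_{s\to 1} (s-1)\,\zeta(s) \;=\; \frac{\eta(1)}{\ln 2} \;=\; 1,
\]
und gleichzeitig ist die Pollordnung genau eins, da Zähler und Nenner einfache Nullstellen bzw.\ endliche, nichtverschwindende Werte beisteuern.

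Die eigentliche Hürde erwarte ich bei der Holomorphie von $\eta(s)$ an der Stelle $s=1$, denn dort konvergiert die Reihe nicht mehr absolut. Diese Lücke würde ich mit einer Abelschen Partialsummation schließen: Aus der gleichmäßigen Beschränktheit $\bigl|\sum_{n\le N}(-1)^{n+1}\bigr|\le 1$ folgt via Abel-Summation die gleichmäßige Konvergenz der Etareihe auf kompakten Teilmengen von $\Re(s)>0$ und damit (nach dem Satz von Weierstraß) ihre Holomorphie auf dieser Halbebene. Alle übrigen Schritte sind rein rechnerisch und benötigen keine weiterführenden analytischen Hilfsmittel.
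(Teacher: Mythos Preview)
Your proof is correct but takes a genuinely different route from the paper. The paper's own proof consists of a single line: it simply cites the Laurent expansion
\[
\zeta(s) = \frac{1}{s-1} + \gamma + \mathcal{O}(s-1)
\]
already recorded in Satz~2.1.5 (the analytic continuation theorem), from which both the simplicity of the pole and the residue value $1$ are read off directly. You instead use the $\eta$-function representation from Satz~\ref{Satz:Dirichlet}, isolating the singularity in the elementary prefactor $(1-2^{1-s})^{-1}$ and computing the residue explicitly as $\eta(1)/\ln 2 = \ln 2/\ln 2 = 1$. The paper's route is shorter but rests on the Laurent expansion, which in Abschnitt~2.1.3 is justified only by a Beweisskizze invoking Riemann's theta-function Mellin transform; your route is more self-contained and constructive, needing only the Abel-summation argument you sketch (to secure holomorphy of $\eta$ on $\Re(s)>0$) together with the classical value of the alternating harmonic series. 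Both arguments are valid; yours has the modest advantage of not leaning on the heavier theta-function machinery, while the paper's has the advantage of making the full Laurent expansion (including the Euler--Mascheroni constant) available in one stroke.
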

	
	\begin{proof}
		Dies folgt direkt aus der Laurent-Entwicklung
		\[
		\zeta(s) = \frac{1}{s-1} + \gamma + \mathcal{O}(s-1),
		\]
		die in Abschnitt~2.1.3 hergeleitet wurde.
	\end{proof}
	
	\subsection{Nullstellenfreie Region}
	
	\begin{Satz}[Nullstellenfreie Region {\footnotesize~\cite[Kap.~3.8, S.~86--88]{Titchmarsh1986}; %
			\cite[Kap.~4, S.~83--85]{IwaniecKowalski2004}}]
		Es existiert eine absolute Konstante $c > 0$, sodass
		\[
		\zeta(s) \neq 0 \quad \text{für} \quad \Re(s) > 1 - \frac{c}{\log(|\Im(s)| + 2)}.
		\]
	\end{Satz}
	
	\begin{proof}[Beweisskizze]
		Der Beweis verwendet explizite Abschätzungen der von $\zeta(s)$ induzierten Dirichletreihen,
		kombiniert mit Hadamards Faktorisierungssatz und der Euler-Produktstruktur.
		Die vollständige Darstellung findet sich bei Titchmarsh~\cite[S.~66]{Titchmarsh1986}.
	\end{proof}
	
	\subsection{Wachstumsverhalten der Nullstellenanzahl}
	
	\begin{Satz}[Riemann–von Mangoldt {\footnotesize~\cite[Kap.~9.3, S.~214--220]{Titchmarsh1986}; %
			\cite[Kap.~7, S.~135--140]{Edwards2001}}]
		Sei
		\[
		N(T) := \#\{ \rho = \beta + i\gamma \mid \zeta(\rho) = 0,\ 0 < \gamma \leq T \}.
		\]
		Dann gilt
		\[
		N(T) = \frac{T}{2\pi} \log\left( \frac{T}{2\pi} \right) - \frac{T}{2\pi} + \mathcal{O}(\log T), \quad T \to \infty.
		\]
	\end{Satz}
	
	\begin{Bemerkung}
		Die Hauptterme dieses asymptotischen Gesetzes werden in Kapitel~7 zur Herleitung einer Nullstellennäherungsformel verwendet.
	\end{Bemerkung}

	\section{Zusammenfassung der Nullstellenstruktur}
	
	Die in den Abschnitten~2.1 und~2.2 dargestellten Eigenschaften der Riemannschen Zetafunktion ergeben zusammen ein präzises Bild der Nullstellenverteilung:
	
	\begin{itemize}
		\item \textbf{Triviale Nullstellen:} 
		Einfache Nullstellen bei den negativen geraden Zahlen
		\[
		s = -2n, \quad n \in \mathbb{N}.
		\]
		Sie folgen unmittelbar aus dem Sinusfaktor in der Funktionalgleichung.
		
		\item \textbf{Nichttriviale Nullstellen:} 
		Alle weiteren Nullstellen liegen im kritischen Streifen
		\[
		0 < \Re(s) < 1.
		\]
		Die Riemannsche Vermutung postuliert, dass sie die Form $\rho = \frac12 + i\gamma$ besitzen.
		
		\item \textbf{Symmetrie:} 
		Jede Nullstelle $\rho$ erzeugt die Spiegelpunkte $1-\rho$, $\overline{\rho}$ und $1-\overline{\rho}$.
		Liegt $\rho$ auf der kritischen Linie, so treten Nullstellen paarweise als konjugierte Werte auf.
		
		\item \textbf{Diskretheit:} 
		Die Nullstellen bilden eine diskrete Teilmenge von $\mathbb{C}$, es existiert keine Häufung im Endlichen.
		
		\item \textbf{Polstelle:} 
		$\zeta(s)$ besitzt eine einzige Singularität – eine einfache Polstelle bei $s=1$ mit Residuum $1$.
		
		\item \textbf{Nullstellenfreie Region:} 
		Rechts von $\Re(s) = 1 - \frac{c}{\log(|\Im(s)| + 2)}$ existieren keine Nullstellen (Hadamard–de la Vallée-Poussin).
		
		\item \textbf{Wachstumsverhalten:} 
		Die Anzahl $N(T)$ der nichttrivialen Nullstellen mit $0 < \Im(s) \leq T$ erfüllt
		\[
		N(T) = \frac{T}{2\pi} \log\left( \frac{T}{2\pi} \right) - \frac{T}{2\pi} + \mathcal{O}(\log T),
		\]
		was die mittlere Dichte der Nullstellen beschreibt.
	\end{itemize}
	
	Diese Struktur wird in den folgenden Kapiteln genutzt, um das regulierte Integralmodell so zu definieren, 
	dass jede Abweichung von der Riemannschen Vermutung analytisch sichtbar wird.

\newpage

\chapter{Konstruktion des Integranden für das regulierte Integralmodells}

Die Nullstellen der Riemannschen Zetafunktion gehören zu den spannendsten und tiefgründigsten Fragen in der analytischen Zahlentheorie.  
Um diese besser zu verstehen, bietet sich ein Ansatz an, der über Flächenintegrale arbeitet, deren Konvergenzverhalten eng mit der Verteilung der Nullstellen zusammenhängt.

In diesem Kapitel wollen wir genau so ein Integral Stück für Stück aufbauen.  
Dabei ist es uns wichtig, eine gute Balance zu finden: Einerseits soll das Integral sehr empfindlich auf kleine Abweichungen von der kritischen Linie reagieren, andererseits muss es so reguliert sein, dass es global gut auswertbar bleibt und mathematisch sauber definiert ist.

Eine besondere Herausforderung stellen dabei die lokalen Divergenzen in der Nähe der Nullstellen dar, vor allem bei mehrfachen Nullstellen.  
Außerdem müssen wir sorgfältig mit den Randbereichen und möglichen Polstellen umgehen.  
Um diese Schwierigkeiten zu meistern, schneiden wir kleine Kreise um jede Nullstelle aus — die Radien wählen wir so, dass wir auch unbekannte Vielfachheiten sicher abdecken.

Erst mit dieser Kombination aus geeigneter Ausscheidung und sensibler Gewichtung entsteht ein wohldefinierter Integrand, der uns als solides Fundament für die weitere Analyse der Riemannschen Vermutung dient.

Im Folgenden starten wir mit einem naiven Ansatz für den Integranden und zeigen, warum und wie wir ihn verbessern müssen.

\section{Der naive Ausgangspunkt: Der Integrand \texorpdfstring{$\frac{1}{|\zeta(s)|^{2}}$}{1/|zeta(s)|^2}}

Ein naheliegender erster Ansatz für ein globales Integralkriterium zur Untersuchung der Nullstellen der Zetafunktion basiert auf dem Integranden
\[
I_0(R) := \frac{1}{2R} \iint_{D_R} \frac{1}{|\zeta(s)|^{2}} \, dA(s),
\]
wobei
\[
D_R := \left\{ s \in \mathbb{C} \mid -R \leq \operatorname{Im}(s) \leq R, \quad \frac{1}{2} - \delta \leq \operatorname{Re}(s) \leq \frac{1}{2} + \delta \right\}
\]
ein vertikaler Streifen um die kritische Linie darstellt und \(dA(s)\) das Lebesgue-Flächenmaß in der komplexen Ebene bezeichnet.

Dieser Ansatz erscheint zunächst intuitiv, da bei Nullstellen \(\rho_n\) der Zetafunktion, für die \(\zeta(\rho_n) = 0\) gilt, der Integrand divergiert und somit stark auf die Position der Nullstellen reagiert.

\section{Schwächen des naiven Ansatzes}

\begin{itemize}
	\item \textbf{Lokale Divergenzen an Nullstellen:}  
	Das Problem entsteht an den Nullstellen \(\rho_n\) der Zetafunktion, bei denen
	\[
	\frac{1}{|\zeta(s)|^{2}} \to \infty
	\]
	gilt, wodurch das Integral lokal nicht integrierbar wird. Insbesondere bei Mehrfachnullstellen verschärft sich dieses Problem erheblich.
	
	\item \textbf{Fehlende Gewichtung nach Entfernung von der kritischen Linie:}  
	Zudem besitzt der Integrand keine explizite Gewichtung, die Abweichungen von der kritischen Linie stärker oder schwächer gewichtet.  
	Dies ist ungünstig, da die Riemannsche Vermutung die exakte Lage der Nullstellen auf der kritischen Linie postuliert.
\end{itemize}

Diese Schwächen machen deutlich, dass der naive Integrand keine geeignete Grundlage für ein global reguliertes Integral darstellt.

\bigskip

Im nächsten Abschnitt werden wir erste Verbesserungen durch eine symmetrische Regularisierung einführen, um lokale Divergenzen abzuschwächen.

\section{Erste Verbesserung: Symmetrische Regularisierung}

Um die lokalen Divergenzen an den Nullstellen abzuschwächen, modifizieren wir den Integranden durch einen symmetrischen Vorfaktor, der bei \(\operatorname{Re}(s) = \frac{1}{2}\) verschwindet:

\[
F_1(s) := \frac{\left(\operatorname{Re}(s) - \frac{1}{2}\right)^2}{|\zeta(s)|^2}.
\]

Dieser Vorfaktor schwächt die Singularitäten an der kritischen Linie ab und macht das Integral zumindest lokal besser handhabbar.

Insbesondere

\[
F_1(s) = \frac{\left(\operatorname{Re}(s) - \frac{1}{2}\right)^2}{|\zeta(s)|^2}
\]

eliminiert die einfache Polstelle an der kritischen Linie quadratisch, was zu einer Abschwächung der Divergenz führt.

Dennoch bestehen weiterhin wesentliche Schwächen dieses Ansatzes:

\begin{itemize}
	\item \textbf{Polynomiale Abschwächung:}  
	Die quadratische Dämpfung am kritischen Rand ist nur eine polynomiale Abschwächung. Nullstellen, die nicht exakt auf der kritischen Linie liegen, werden daher nur schwach gewichtet, was die Sensitivität verringert.
	
	\item \textbf{Instabilität für große Imaginärteile:}  
	Der Integrand \(F_1(s)\) besitzt weiterhin eine starke Abhängigkeit von \(\frac{1}{|\zeta(s)|^2}\), welche insbesondere für große Imaginärteile \(t = \operatorname{Im}(s)\) sehr instabil sein kann, da \(|\zeta(s)|\) stark schwankt.
	
	\item \textbf{Unzureichende Berücksichtigung der Vielfachheit von Nullstellen:}  
	Der Vorfaktor berücksichtigt nicht die Vielfachheit von Nullstellen, wodurch bei Mehrfachnullstellen stärkere Divergenzen entstehen, die weiterhin problematisch sind.
\end{itemize}

Diese Einschränkungen zeigen, dass die symmetrische Regularisierung ein wichtiger, aber nicht ausreichender Schritt ist, um ein global reguliertes Integral mit stabilen und sensitiven Eigenschaften zu erreichen.

Im nächsten Abschnitt werden wir daher eine streng sensitiv gewichtete Regularisierung vorstellen, die diese Schwächen gezielt adressiert.

\section{Endgültige Definition: Streng sensitiv gewichteter Integrand}

Um die Schwächen der bisherigen Ansätze zu beheben, definieren wir den Integranden
\label{Definition:DerIntegrand}
\[
J_C(s) := \frac{|\zeta(s)|^{-\lambda}}{\left|\operatorname{Re}(s) - \frac{1}{2}\right|^{p}},
\]

wobei die Konstanten \(\lambda \geq 2\) und \(0 < p < 1\) fest gewählt werden.

Diese Form garantiert:

\begin{itemize}
	\item eine starke Sensitivität gegenüber Nullstellen, da der Integrand an diesen Stellen divergiert,
	\item eine gewichtete Abschwächung nahe der kritischen Linie, die durch den Exponenten \(p\) gesteuert wird,
	\item eine angemessene Behandlung von Mehrfachnullstellen über den Exponenten \(\lambda\),
	\item und die Möglichkeit, ein wohldefiniertes Integral auf einem regulierten Bereich zu definieren.
\end{itemize}

Für eine vollständige mathematische Fundierung sind noch folgende Eigenschaften zu beweisen:

\begin{itemize}
	\item Wohldefiniertheit des Integranden auf dem regulierten Bereich,
	\item Messbarkeit und Integrierbarkeit,
	\item Stetigkeit und lokale Integrabilität,
	\item Verhalten an einfachen und mehrfachen Nullstellen,
	\item sowie Stabilität gegenüber kleinen Störungen.
\end{itemize}

Diese Eigenschaften werden im Anschluss an die Definition des regulierten Integrationsbereichs und des normierten Integrals systematisch bewiesen.

\section{Konstruktion des regulierten Bereichs \(\mathcal{D}_R^{\mathrm{reg}}\)}

Um die Divergenzen an den Nullstellen der Riemannschen Zetafunktion zu eliminieren, ist es erforderlich, die entsprechenden Punkte aus dem Integrationsgebiet gezielt auszuschneiden.  
Zu diesem Zweck wird ein regulierter Streifenbereich definiert, in dem um jede Nullstelle eine kleine Kreisscheibe entfernt wird.

\begin{Definition}[Vertikaler Streifen \(D_R\)]  
	Für \(R > 0\) und \(\delta > 0\) definieren wir den vertikalen Streifen
	\[
	D_R := \left\{ s \in \mathbb{C} \mid -R \leq \operatorname{Im}(s) \leq R, \quad \frac{1}{2} - \delta \leq \operatorname{Re}(s) \leq \frac{1}{2} + \delta \right\}.
	\]
\end{Definition}

\begin{Definition}[Kreisscheiben um Nullstellen und Polstelle]  
	Sei \(\rho_n = \beta_n + i \gamma_n\) eine nichttriviale Nullstelle der Zetafunktion innerhalb von \(D_R\).  
	Wir definieren die Kreisscheibe mit Mittelpunkt \(\rho_n\) und Radius \(\varepsilon_n > 0\) durch
	\[
	K_n := \left\{ s \in \mathbb{C} : |s - \rho_n| < \varepsilon_n \right\}.
	\]
	
	Außerdem definieren wir eine Kreisscheibe um die Polstelle \(s=1\)
	\[
	K_{\mathrm{pol}} := \left\{ s \in \mathbb{C} : |s - 1| < \varepsilon_{\mathrm{pol}} \right\}
	\]
	mit \(\varepsilon_{\mathrm{pol}} > 0\).
\end{Definition}

\begin{Definition}[Regulierter Bereich \(\mathcal{D}_R^{\mathrm{reg}}\)]  
	\label{Definition:RegulierterBereich}
	Der regulierte Streifenbereich ergibt sich durch Ausschneiden der Kreisscheiben:
	\[
	\mathcal{D}_R^{\mathrm{reg}} := D_R \setminus \left( \bigcup{\rho_n \in D_R} K_n \cup K_{\mathrm{pol}} \right).
	\]
\end{Definition}

\begin{Definition}[Ausschneidungsradien]
	\label{Definition:Ausschneidungsradien}  
	Sei \(N_0 \in \mathbb{R}\) fest. Für jede Nullstelle \(\rho_n\) mit Index \(n \geq 1\) definieren wir den Ausscheidungsradius
	\[
	\varepsilon_n := \frac{1}{(n + N_0)^{\alpha}},
	\]
	mit festem \(\alpha > 1\).
\end{Definition}

	Diese Wahl stellt sicher, dass die Summe der Flächeninhalte der ausgeschnittenen Kreisscheiben endlich bleibt

\begin{Satz}[Endlichkeit des ausgeschnittenen Maßes]
	\label{satz:endliches-mass}
	Die gesamte aus dem regulierten Bereich \( \mathcal{D}_R^{\mathrm{reg}} \) entfernte Fläche besitzt endliches Maß:
	\[
	\sum_{\rho_n \in \mathcal{K}_R} \mathrm{Fl}(K_n) < \infty,
	\]
	wobei \( \mathrm{Fl}(K_n) = \pi \varepsilon_n^2 \) die Fläche der ausgeschnittenen Kreisscheibe ist.
\end{Satz}

\begin{proof}
	Die Anzahl der Nullstellen \( \rho_n \) im Streifen \( \mathcal{D}_R^{\mathrm{reg}} \) wächst asymptotisch wie \( O(R \log R) \). Da
	\[
	\varepsilon_n = \frac{1}{(n + N_0)^\alpha}
	\quad \text{mit} \quad \alpha > 1,
	\]
	ist die Reihe
	\[
	\sum_{n=1}^{\infty} \varepsilon_n^2 = \sum_{n=1}^{\infty} \frac{1}{(n + N_0)^{2\alpha}} < \infty
	\]
	konvergent. Die Summe der Flächeninhalte ist somit endlich.
\end{proof}

\section{Analyse des streng sensitiv gewichteten Integranden \(J_C(s)\)}
\label{Abschnitt:IntegrandAnalyse}
\subsection{Wohldefiniertheit}

\label{sec:wohldefiniertheit}

\begin{Lemma}
	\label{Lemma:wohldefiniertheit}
	Der Integrand
	\[
	J_C(s) = \frac{|\zeta(s)|^{-\lambda}}{|\operatorname{Re}(s) - \tfrac12|^p}
	\]
	ist auf dem regulierten Bereich \( \mathcal{D}_R^{\mathrm{reg}} \) fast überall endlich definiert.
\end{Lemma}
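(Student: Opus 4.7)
The plan is to localize every potential source of infinity of $J_C(s)$ and verify that each is either removed by the construction of $\mathcal{D}_R^{\mathrm{reg}}$ or confined to a planar Lebesgue null set. The integrand is a product of two factors, so I would handle them separately.

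First, I would examine $|\zeta(s)|^{-\lambda}$. Since $\zeta$ is meromorphic on $\mathbb{C}$ with its only singularity at $s=1$, this factor can blow up only at zeros of $\zeta$ or become undefined at the pole. By Definition \ref{Definition:RegulierterBereich}, the regulated domain explicitly excises the disks $K_n$ around every nontrivial zero $\rho_n \in D_R$ as well as the disk $K_{\mathrm{pol}}$ around $s=1$. Hence on $\mathcal{D}_R^{\mathrm{reg}}$ the function $\zeta$ is holomorphic and nowhere vanishing, so $|\zeta(s)|^{-\lambda}$ is continuous and strictly positive, in particular finite, at every single point of the regulated domain. (Note that trivial zeros at $s=-2n$ lie outside the vertical strip $D_R$ for the relevant parameter range $\tfrac12-\delta \le \mathrm{Re}(s) \le \tfrac12+\delta$ with $\delta$ small, so they play no role here.)

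Next I would turn to the weight $|\mathrm{Re}(s)-\tfrac12|^{-p}$, which is the only remaining candidate for a singularity. This factor diverges precisely on the critical line $L := \{s \in \mathbb{C} : \mathrm{Re}(s)=\tfrac12\}$. The intersection $L \cap \mathcal{D}_R^{\mathrm{reg}}$ is contained in a bounded vertical segment of length at most $2R$ and therefore has two-dimensional Lebesgue measure zero in $\mathbb{C} \simeq \mathbb{R}^2$. Off this one-dimensional set, the factor is continuous and finite.

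Combining both observations gives $J_C < \infty$ on the complement $\mathcal{D}_R^{\mathrm{reg}} \setminus L$, which differs from $\mathcal{D}_R^{\mathrm{reg}}$ only by a planar null set — exactly the claim. I do not anticipate any genuine obstacle: the argument is a bookkeeping exercise on where each factor can fail. The one conceptual point that deserves emphasis, and which I would flag explicitly in the write-up, is that pointwise almost-everywhere finiteness (as asserted here) is strictly weaker than local integrability; the latter requires the exponent condition $0<p<1$ together with a further estimate and is treated in the subsequent parts of Section \ref{Abschnitt:IntegrandAnalyse}, not in this lemma.
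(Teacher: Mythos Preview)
Your proposal is correct and follows essentially the same route as the paper: identify the two sources of singularity, observe that zeros and the pole of $\zeta$ are excised by Definition~\ref{Definition:RegulierterBereich} so that $|\zeta(s)|^{-\lambda}$ is finite everywhere on $\mathcal{D}_R^{\mathrm{reg}}$, and then note that the remaining singular set $\{\operatorname{Re}(s)=\tfrac12\}$ has planar Lebesgue measure zero. Your additional remarks on the trivial zeros lying outside the strip and on the distinction between almost-everywhere finiteness and local integrability are sound clarifications that go slightly beyond what the paper records, but the core argument is the same.
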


\begin{proof}
	Nach Definition~\ref{Definition:RegulierterBereich} ergibt sich der regulierte Bereich \( \mathcal{D}_R^{\mathrm{reg}} \)
	durch Entfernen von kleinen Kreisscheiben \( B(\rho_n, \varepsilon_n) \) um alle Nullstellen \( \rho_n \) sowie um die Polstelle bei \( s = 1 \). Die Radien \( \varepsilon_n \) sind nach Definition~\ref{Definition:Ausschneidungsradien} so gewählt, dass sich die Scheiben nicht überlappen.
	
	Da alle Nullstellen der Zetafunktion isoliert sind, existieren solche Radien \( \varepsilon_n > 0 \), dass die Kreisscheiben disjunkt sind. Damit sind alle Nullstellenbereiche ausgeschnitten. Für fast alle \( s \in \mathcal{D}_R^{\mathrm{reg}} \) gilt daher:
	\[
	|\zeta(s)| > 0 \quad \text{und} \quad \left|\operatorname{Re}(s) - \tfrac12\right| > 0.
	\]
	Da \( \lambda \ge 2 \) und \( 0 < p < 1 \), ist somit auch
	\[
	J_C(s) = \frac{1}{|\zeta(s)|^{\lambda} \cdot |\operatorname{Re}(s) - \tfrac12|^p} < \infty
	\]
	auf \( \mathcal{D}_R^{\mathrm{reg}} \) fast überall endlich definiert, da sowohl die ausgeschlossenen Kreisscheiben als auch die kritische Linie \( \{ \operatorname{Re}(s) = \tfrac{1}{2} \} \) eine Nullmenge bezüglich des zweidimensionalen Lebesgue-Maßes bilden (vgl.~\cite[Kap.~1, S.~1--4]{Folland1999}).
\end{proof}

\subsection{Stetigkeit und lokale Integrabilität}

\begin{Lemma}
	\(J_C(s)\) ist stetig auf \(\mathcal{D}_R^{\mathrm{reg}}\) und lokal \(L^1\)-integrierbar.
	\label{Lemma:stetigkeit/Integrierbarkeit}
\end{Lemma}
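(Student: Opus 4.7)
Der Plan ist, die beiden Aussagen getrennt zu behandeln: Die Stetigkeit folgt unmittelbar aus der Regularität von $\zeta$ auf dem regulierten Bereich, während die lokale Integrabilität wesentlich auf der Bedingung $0 < p < 1$ beruht.

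Zunächst zur Stetigkeit: Auf $\mathcal{D}_R^{\mathrm{reg}}$ ist $\zeta$ holomorph (die Polstelle $s=1$ wurde durch $K_{\mathrm{pol}}$ entfernt) und nach Konstruktion nullstellenfrei (die Mengen $K_n$ schneiden alle Nullstellen aus). Somit ist $s \mapsto |\zeta(s)|^{-\lambda}$ stetig und strikt positiv. Der zweite Faktor $s \mapsto |\operatorname{Re}(s) - \tfrac12|^{-p}$ ist als Abbildung in $(0,+\infty]$ stetig, endlich außerhalb der kritischen Linie und $+\infty$ auf $\{\operatorname{Re}(s) = \tfrac12\} \cap \mathcal{D}_R^{\mathrm{reg}}$. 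Das Produkt beider Faktoren liefert Stetigkeit im erweiterten Sinne auf ganz $\mathcal{D}_R^{\mathrm{reg}}$, im klassischen (endlichwertigen) Sinne auf der ko-null offenen Teilmenge $\mathcal{D}_R^{\mathrm{reg}} \setminus \{\operatorname{Re}(s) = \tfrac12\}$; insbesondere ist $J_C$ Borel-messbar.

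Zur lokalen $L^1$-Integrabilität: Für ein beliebiges Kompaktum $K \subset \mathcal{D}_R^{\mathrm{reg}}$ ist $K$ disjunkt von allen $K_n$ und von $K_{\mathrm{pol}}$, sodass die stetige Funktion $|\zeta|$ auf $K$ ein positives Minimum $c_K > 0$ annimmt. Dann gilt $|\zeta(s)|^{-\lambda} \le c_K^{-\lambda}$ auf $K$, und es genügt
\[
\int_K \bigl|\operatorname{Re}(s) - \tfrac12\bigr|^{-p} \, dA(s) < \infty
\]
zu zeigen. Schreibt man $s = \sigma + it$ und wendet Fubini an, so ist dieses Integral durch $|I_K| \cdot \int_{1/2-\delta}^{1/2+\delta} |\sigma - \tfrac12|^{-p} \, d\sigma$ beschränkt, wobei $I_K$ die Projektion von $K$ auf die Imaginärachse bezeichnet. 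Wegen $0 < p < 1$ ist das innere Integral gleich $\tfrac{2\delta^{1-p}}{1-p} < \infty$, und da $I_K$ als Projektion eines Kompaktums beschränkt ist, folgt die Behauptung.

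Die einzige Feinheit, die Beachtung verdient, ist die Lesart des Stetigkeitsbegriffs auf der kritischen Linie, da $J_C$ dort innerhalb von $\mathcal{D}_R^{\mathrm{reg}}$ tatsächlich $+\infty$ wird; die natürliche Interpretation ist Stetigkeit als $[0,+\infty]$-wertige Funktion zusammen mit echter Stetigkeit auf der ko-null offenen Teilmenge. Eine substanzielle Hürde sehe ich hier nicht; für die nachfolgende Integralanalyse ist ausschlaggebend, dass die transversale Singularität entlang der kritischen Linie gerade wegen $p < 1$ integrabel bleibt, und dieses Argument trägt die gesamte Aussage.
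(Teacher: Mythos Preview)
Your proof is correct and follows essentially the same approach as the paper: both argue continuity away from the critical line via the holomorphy and nullstellenfreiheit of $\zeta$ on the regulated domain, treat the critical line as a null set, and derive local $L^1$-integrability from the boundedness of $|\zeta|^{-\lambda}$ together with the one-dimensional integrability of $|\sigma-\tfrac12|^{-p}$ for $p<1$. Your presentation is in fact slightly more precise---you make the extended $[0,+\infty]$-valued continuity explicit and carry out the Fubini step with the concrete bound $\tfrac{2\delta^{1-p}}{1-p}$---whereas the paper simply states ``fast überall stetig'' and appeals to the local integrability of $|x|^{-p}$ without writing out the product-measure argument.
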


\begin{proof}
	Wir betrachten den Integrand
	\[
	J_C(s) = \frac{|\zeta(s)|^{-\lambda}}{\left|\operatorname{Re}(s) - \frac{1}{2}\right|^p}
	\]
	auf dem regulierten Bereich \(\mathcal{D}_R^{\mathrm{reg}}\).
	
	\textbf{1. Stetigkeit auf \(\mathcal{D}_R^{\mathrm{reg}} \setminus L\)}  
	Abgesehen von den ausgeschlossenen Kreisscheiben \(B(\rho_n, \varepsilon_n)\) um die Nullstellen \(\rho_n\) sowie der Menge
	\[
	L := \left\{ s \in \mathcal{D}_R^{\mathrm{reg}} \mid \operatorname{Re}(s) = \frac{1}{2} \right\},
	\]
	ist \(J_C(s)\) das Produkt zweier stetiger Funktionen, da \(\zeta(s)\) holomorph und nicht verschwindend auf \(\mathcal{D}_R^{\mathrm{reg}} \setminus \bigcup B(\rho_n, \varepsilon_n)\) ist und \(\operatorname{Re}(s) \neq \frac{1}{2}\).
	
	\textbf{2. Verhalten an der kritischen Linie \(L\)}  
	Die Menge \(L\) ist eine eindimensionale Linie im zweidimensionalen Flächenmaß und besitzt somit Maß Null (vgl.~\cite[Kap.~1, S.~3--4]{Folland1999}). Punktweise Unstetigkeit oder Divergenz auf \(L\) beeinträchtigt das Lebesgue-Integral nicht.
	
	\textbf{3. Lokale \(L^1\)-Integrabilität}  
	Wegen \(0 < p < 1\) ist die Funktion \(x \mapsto |x|^{-p}\) lokal integrierbar in einer Umgebung um \(x=0\) bezüglich des eindimensionalen Maßes. Da das Flächenmaß in einer Umgebung von \(L\) einer Produktmenge entspricht, gilt auch im komplexen Raum die lokale Integrabilität.
	
	Außerdem ist \(|\zeta(s)|^{-\lambda}\) auf \(\mathcal{D}_R^{\mathrm{reg}}\) beschränkt, da Nullstellenbereiche ausgeschnitten sind.
	
	\medskip
	
	Somit folgt, dass \(J_C(s)\) auf \(\mathcal{D}_R^{\mathrm{reg}}\) lokal \(L^1\)-integrierbar und fast überall stetig ist.
\end{proof}

\subsection{Verhalten an Nullstellen}

\begin{Satz}
	\label{satz:verhalten-nullstelle}
	Sei \(\rho \in \mathbb{C}\) eine Nullstelle der Riemannschen Zetafunktion \(\zeta(s)\) mit Vielfachheit \(m \ge 1\), und \(\lambda \ge 2\), \(0 < p < 1\). Dann ist der Integrand
	\[
	J_C(s) = \frac{|\zeta(s)|^{-\lambda}}{|\operatorname{Re}(s) - \tfrac{1}{2}|^p}
	\]
	im regulierten Bereich \( \mathcal{D}_R^{\mathrm{reg}} \) lokal \(L^1\)-integrierbar, da die Umgebung um \(\rho\) durch die regulierte Ausschneidung entfernt wird.
\end{Satz}

\begin{proof}
	Sei \(\rho \in \mathbb{C}\) eine Nullstelle von \(\zeta(s)\) mit Vielfachheit \(m \ge 1\). Dann existiert gemäß \cite[Kap.~2, S.~18]{Titchmarsh1986} eine Umgebung \(U_\rho\), sodass
	\[
	\zeta(s) = (s - \rho)^m h(s),
	\]
	wobei \(h(s)\) holomorph ist und \(h(\rho) \ne 0\). Dann folgt:
	\[
	|\zeta(s)| = |s - \rho|^m \cdot |h(s)| \sim C |s - \rho|^m
	\]
	für \(C := |h(\rho)| > 0\).
	
	Somit ergibt sich für den Integranden:
	\[
	J_C(s) \sim \frac{1}{|s - \rho|^{\lambda m} \cdot |\operatorname{Re}(s) - \tfrac12|^p}.
	\]
	
	In Polarkoordinaten \(s = \rho + r e^{i\theta}\), mit Flächelement \(dA = r \, dr \, d\theta\), ergibt sich lokal um \(\rho\):
	\[
	\int_0^{2\pi} \int_\varepsilon^\delta \frac{r}{r^{\lambda m} \cdot |\operatorname{Re}(\rho + r e^{i\theta}) - \tfrac12|^p} \, dr \, d\theta.
	\]
	
	**Fall 1:** \(\operatorname{Re}(\rho) \ne \tfrac12\):  
	Dann ist \(|\operatorname{Re}(s) - \tfrac12|\) lokal durch \(K > 0\) beschränkt von unten.  
	Es bleibt:
	\[
	\int_\varepsilon^\delta r^{1 - \lambda m} \, dr,
	\]
	welches konvergiert genau dann, wenn \(1 - \lambda m > -1\), also \(\lambda m < 2\).
	
	**Fall 2:** \(\operatorname{Re}(\rho) = \tfrac12\):  
	Dann gilt \(|\operatorname{Re}(s) - \tfrac12| \sim r |\cos\theta|\), und damit
	\[
	J_C(s) \sim \frac{1}{r^{\lambda m + p}} \Rightarrow \int_\varepsilon^\delta r^{1 - \lambda m - p} \, dr.
	\]
	Das Integral konvergiert genau dann, wenn \(\lambda m + p < 2\).
	
Zusammenfassend gilt: Für \(\lambda \ge 2\) und \(p < 1\) divergieren die lokalen Integrale um die Nullstellen \(\rho\) grundsätzlich.  
Da jedoch um jede Nullstelle eine Kreisscheibe \(B(\rho, \varepsilon)\) ausgeschnitten wird,  
ist der Integrand auf dem regulierten Bereich \( \mathcal{D}_R^{\mathrm{reg}} \) wohldefiniert und dort lokal \(L^1\)-integrierbar.

\end{proof}

\subsection{Stabilität gegenüber kleinen Störungen}
\label{Satz:Störungen}
\begin{Satz}
	Für kleine stetige Störungen \(h(s)\) bleibt die lokale Integrabilität erhalten.
\end{Satz}

\begin{proof}
	Sei \(h: \mathcal{D}_R^{\mathrm{reg}} \to \mathbb{R}\) eine stetige Funktion mit der Eigenschaft, dass
	
	\[
	\|h\|{\infty, K} := \sup{s \in K} |h(s)| < \infty
	\]
	
	für jede kompakte Teilmenge \(K \subset \mathcal{D}_R^{\mathrm{reg}}\) und \(h\) in jedem lokalen Bereich beliebig klein gewählt werden kann.
	
	Da \(J_C(s)\) lokal \(L^1\)-integrierbar ist (siehe Lemma \ref{Lemma:stetigkeit/Integrierbarkeit} und Satz \ref{satz:verhalten-nullstelle}), folgt für jede kompakte Teilmenge \(K \subset \mathcal{D}_R^{\mathrm{reg}}\):
	
	\[
	\int_K |J_C(s)| \, dA(s) < \infty.
	\]
	
	Für die gestörte Funktion \(\tilde{J}_C(s) = J_C(s) + h(s)\) gilt dann
	
	\[
	\int_K |\tilde{J}_C(s)| \, dA(s) \leq \int_K |J_C(s)| \, dA(s) + \int_K |h(s)| \, dA(s).
	\]
	
	Da \(h(s)\) stetig und lokal beschränkt ist, ist \(\int_K |h(s)| \, dA(s) < \infty\).
	
	Außerdem kann \(h\) so gewählt werden, dass
	
	\[
	\int_K |h(s)| \, dA(s) < \varepsilon
	\]
	
	für jedes \(\varepsilon > 0\) gilt.
	
	Somit ist \(\tilde{J}_C(s)\) ebenfalls lokal \(L^1\)-integrierbar.
	
	\medskip
	
	Die Stetigkeit von \(h\) garantiert zudem, dass keine neuen singulären Stellen eingeführt werden, sodass das Integrationsverhalten von \(J_C(s)\) nicht qualitativ verändert wird.
	
\end{proof}
\newpage

\subsection*{3.6.5 Zusammenfassung und Ausblick}

Die in diesem Abschnitt bewiesenen Eigenschaften des streng sensitiv gewichteten
Integranden \( J_C(s) \) auf dem regulierten Bereich \( \mathcal{D}_R^{\mathrm{reg}} \) gewährleisten eine solide 
mathematische Grundlage für die weitere Analyse des regulierten Flächenintegrals.
Dabei ist zu beachten, dass \( J_C(s) \) nicht auf ganz \( \mathcal{D}_R \) definiert ist:

\begin{itemize}
	\item An den Nullstellen \( \rho_n \) der Zetafunktion wird der Integrand singulär. 
	Diese Punkte werden durch das Ausschneiden kleiner Kreisscheiben \( B(\rho_n, \varepsilon_n) \) 
	gezielt aus dem Integrationsbereich entfernt (Definition~\ref{Definition:Ausschneidungsradien}).
	
	\item Entlang der kritischen Linie \( \operatorname{Re}(s) = \tfrac12 \) besitzt der Integrand eine 
	Polstelle im Nenner. Diese Menge ist jedoch eine eindimensionale Gerade und besitzt 
	bezüglich des zweidimensionalen Lebesgue-Flächenmaßes Nullmaß. Somit beeinflusst sie das Integral nicht 
	(Lemma~\ref{Lemma:stetigkeit/Integrierbarkeit}).
\end{itemize}

Diese bewusste und wohlüberlegte Einschränkung des Definitionsbereichs ist notwendig 
und erlaubt es, \( J_C(s) \) als fast überall wohldefiniert und lokal integrierbar 
zu betrachten (Lemma~\ref{Lemma:wohldefiniertheit} und Satz~\ref{Lemma:stetigkeit/Integrierbarkeit}).

Darüber hinaus wurde gezeigt, dass
\begin{itemize}
	\item \( J_C(s) \) fast überall stetig ist (vgl. Lemma~\ref{Lemma:stetigkeit/Integrierbarkeit}),
	
	\item lokale \(L^1\)-Integrierbarkeit auch im Bereich um die kritische Linie und nahe 
	den Nullstellen durch geeignete Regulierung garantiert ist 
	(Satz~\ref{satz:verhalten-nullstelle},
	
	\item und die lokale Integrierbarkeit gegenüber kleinen, stetigen Störungen robust 
	bleibt (Satz~\ref{Satz:Störungen}).
\end{itemize}

Im folgenden Kapitel wird auf Basis dieser fundierten Analyse die exakte Konstruktion 
und Untersuchung eines regulierten normierten Flächenintegrals durchgeführt, welches 
aus dem Integranden \( J_C(s) \) hervorgeht. Dies ist der entscheidende Schritt, um 
die Riemannsche Vermutung rigoros mittels des Integralmodells zu beweisen.

\vspace{1em}

\chapter{Reguliertes normiertes Integral \(\mathcal{W}(R)\): 
	Definition, Fallanalyse und Konvergenzkriterien}
	\label{Kapitel:DasIntegral}

\section{Einleitung und Definition des Integrals \texorpdfstring{$\mathcal{W}(R)$}{W(R)}}

Auf Grundlage der in Kapitel~3 entwickelten Konstruktion des regulierten, sensitiv gewichteten Integranden \ref{Definition:DerIntegrand}
\[
J_C(s) := \frac{|\zeta(s)|^{-\lambda}}{\left|\operatorname{Re}(s) - \frac{1}{2}\right|^{p}},
\]
und des regulierten Bereichs \( \mathcal{D}_R^{\mathrm{reg}} \subset \mathbb{C} \),
wird nun das zugehörige normierte Flächenintegral definiert:

\begin{Definition}[Reguliertes normiertes Integral]
	Sei \( R > 0 \). Dann definieren wir das regulierte normierte Integral
	\[
	\mathcal{W}(R) := \frac{1}{2R} \iint_{\mathcal{D}_R^{\mathrm{reg}}} J_C(s) \, dA(s),
	\]
	wobei \( dA(s) \) das zweidimensionale Lebesgue-Flächenmaß ist.(vgl.~\cite[Kap.~1, S.~1--4]{Folland1999})
	
\end{Definition}

Die Normierung mit dem Faktor \( \frac{1}{2R} \) stellt sicher, dass der Wertebereich von \( \mathcal{W}(R) \) auch für große Integrationshöhe \( R \to \infty \) in einem kontrollierten Rahmen bleibt. Damit wird vermieden, dass das Integral allein aufgrund wachsender Streifenhöhe divergiert. Die Normierung ist insbesondere notwendig, um das asymptotische Verhalten von \( \mathcal{W}(R) \) zuverlässig bewerten zu können (Abschnitt \ref{Abschnitt:IntegrandAnalyse}  zur lokalen Integrierbarkeit des Integranden).

\vspace{1em}

Das Ziel dieses Kapitels besteht darin, die strukturellen Ursachen für die Konvergenz oder Divergenz von \( \mathcal{W}(R) \) exakt zu analysieren. Dazu werden alle relevanten Konfigurationen möglicher Nullstellenverteilungen untersucht und ihre Auswirkungen auf das Integral mathematisch klassifiziert.

Die Grundlage dieser Analyse bildet die folgende Überlegung:  
Sobald mindestens eine Nullstelle \( \rho_n \) mit \( \Re(\rho_n) \ne \frac{1}{2} \) existiert, erzeugt der Integrand in jeder Umgebung von \( \rho_n \) eine lokale Divergenz, da der Zähler \( |\zeta(s)|^{-\lambda} \) dort gegen unendlich strebt.  
Diese Divergenzen werden jedoch durch die wohldefinierte Ausschneidung kleiner Kreisscheiben
\[
\varepsilon_n := \frac{1}{(n + N_0)^\alpha}, \quad \alpha > 1,
\]
gezielt entfernt (Definitoin \ref{Definition:Ausschneidungsradien}). Diese Ausschneidung stellt sicher, dass alle singulären Beiträge durch Nullstellen – unabhängig von deren Ordnung und Position – vollständig reguliert werden.

\vspace{0.5em}

Im weiteren Verlauf dieses Kapitels werden nun folgende Fragestellungen behandelt:
\begin{itemize}
	\item Ist \( \mathcal{W}(R) \) wohldefiniert für jedes \( R > 0 \)?
	\item Unter welchen Bedingungen ist \( \mathcal{W}(R) < \infty \)?
	\item Wie hängt die Konvergenzstruktur von der Lage der Nullstellen ab?
	\item Gilt: \( \displaystyle \lim_{R \to \infty} \mathcal{W}(R) < \infty \quad \Longleftrightarrow \quad \text{RH gilt}? \)
\end{itemize}

Diese Fragen werden durch eine vollständige Fallunterscheidung aller strukturell möglichen Nullstellenkonfigurationen beantwortet.

\section{Wohldefiniertheit und Stetigkeit von \(\mathcal{W}(R)\)}

\begin{Lemma}[Wohldefiniertheit des Welz-Integrals]\label{lem:wohlerdefiniert}
	Für jedes \(R > 0\) ist das regulierte normierte Flächenintegral
	
	\[
	\mathcal{W}(R) := \frac{1}{2R} \iint_{\mathcal{D}_R^{\mathrm{reg}}} J_C(s) \, dA(s)
	\]
	
	endlich und wohldefiniert.
\end{Lemma}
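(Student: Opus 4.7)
Der Plan ist, die Endlichkeit von $\mathcal{W}(R)$ durch ein elementares Kompaktheits- und Integrabilitätsargument nachzuweisen. Entscheidend ist, dass $R$ fest ist, das Integrationsgebiet $\mathcal{D}_R^{\mathrm{reg}}$ folglich beschränkt bleibt und nur endlich viele Singularitäten enthält; der Beweis reduziert sich damit auf eine gleichmäßige obere Schranke für $|\zeta|^{-\lambda}$ plus eine eindimensionale Integrabilitätsbedingung für den Vorfaktor $|\operatorname{Re}(s)-\tfrac12|^{-p}$.

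Zuerst beobachte ich, dass der umfassende Streifen $D_R \subseteq [\tfrac12-\delta,\tfrac12+\delta] \times [-R,R]$ ein kompaktes Rechteck mit Fläche $4R\delta$ ist. Nach der Riemann--von Mangoldt-Asymptotik (bzw.\ schon aus der reinen Diskretheit der nichttrivialen Nullstellen, Kap.~2.2) liegen in $D_R$ nur endlich viele Nullstellen $\rho_1,\dots,\rho_N$; zusammen mit der eventuell enthaltenen Polstelle $s=1$ werden daher nur endlich viele offene Kreisscheiben aus $D_R$ entfernt. Der Abschluss $\overline{\mathcal{D}_R^{\mathrm{reg}}}$ ist damit als beschränkte, abgeschlossene Teilmenge von $\mathbb{C}$ kompakt und enthält nach Konstruktion weder eine Nullstelle noch die Polstelle von $\zeta$. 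Da $\zeta$ dort stetig und nullstellenfrei ist, nimmt $|\zeta|$ nach dem Satz vom Minimum ein positives Infimum $m=m(R)>0$ an; insbesondere ist $|\zeta(s)|^{-\lambda}\leq m^{-\lambda}$ gleichmäßig auf $\mathcal{D}_R^{\mathrm{reg}}$.

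Im abschließenden Schritt wird über die verbleibende Singularität auf der kritischen Linie integriert. Aus $J_C(s)\leq m^{-\lambda}\,|\operatorname{Re}(s)-\tfrac12|^{-p}$ folgt mit Fubini--Tonelli
\[
\iint_{\mathcal{D}_R^{\mathrm{reg}}} J_C(s)\, dA(s) \;\leq\; m^{-\lambda} \int_{-R}^{R} dy \int_{\tfrac12-\delta}^{\tfrac12+\delta} \bigl|x-\tfrac12\bigr|^{-p}\, dx \;=\; m^{-\lambda}\cdot 2R \cdot \frac{2\,\delta^{1-p}}{1-p},
\]
wobei das innere $x$-Integral wegen $0<p<1$ konvergiert. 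Nach der Normierung mit $\tfrac{1}{2R}$ ergibt sich die (für festes $R$) endliche Schranke $\mathcal{W}(R)\leq \tfrac{2 m^{-\lambda}\delta^{1-p}}{1-p} <\infty$. Messbarkeit und lokale Integrabilität des Integranden sind bereits in Lemma~\ref{Lemma:stetigkeit/Integrierbarkeit} nachgewiesen, sodass das Lebesgue-Integral im klassischen Sinne wohldefiniert ist.

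Die Hauptschwierigkeit ist hier nicht technischer, sondern konzeptioneller Natur: Die Schranke $m=m(R)$ hängt von $R$ ab, und es ist \emph{a priori} völlig offen, wie sie sich für $R\to\infty$ verhält. Für die vorliegende Aussage -- Endlichkeit bei festem $R$ -- spielt das keine Rolle, da Kompaktheit garantiert ist. Die weit schwierigere Frage nach dem asymptotischen Verhalten $\lim_{R\to\infty}\mathcal{W}(R)$, bei der gerade diese $R$-Abhängigkeit von $m(R)$ kritisch wird, ist bereits Gegenstand der späteren Kapitel (Fallunterscheidung der Nullstellenkonfigurationen, Konvergenzbeweis via alternierender Dirichletreihe und regulierter Mellin-Darstellung) und wird hier noch nicht berührt.
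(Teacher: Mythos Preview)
Dein Beweis ist korrekt und folgt demselben Kompaktheitsargument wie die Arbeit: endlich viele ausgeschnittene Scheiben, kompaktes Restgebiet, darauf positive untere Schranke für $|\zeta|$, anschließend Integration des verbleibenden $|\sigma-\tfrac12|^{-p}$-Faktors. Du führst das Argument deutlich expliziter aus als die Arbeit, die nur knapp auf lokale Integrierbarkeit plus kompakte Überdeckung verweist, und gewinnst sogar die konkrete Schranke $\mathcal{W}(R)\le 2\,m(R)^{-\lambda}\delta^{1-p}/(1-p)$.
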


\begin{proof}
	Der Integrand \(J_C(s)\) ist auf \(\mathcal{D}_R^{\mathrm{reg}, }\) fast überall endlich, stetig und lokal integrierbar (Kapitel \ref{Abschnitt:IntegrandAnalyse}).  
	Der Bereich \(\mathcal{D}_R^{\mathrm{reg}, }\) ist ein kompaktes Teilgebiet des kritischen Streifens mit nur endlich vielen ausgeschnittenen Nullstellenscheiben.  
	Durch lokale Überdeckung mit kompakten Teilmengen folgt die globale Integrierbarkeit:
	
	\[
	\iint_{\mathcal{D}_R^{\mathrm{reg}, \mathrm{voll}}} |J_C(s)| \, dA(s) < \infty.
	\]
	
	Die Normierung mit dem Faktor \(\frac{1}{2R}\) beeinflusst die Endlichkeit nicht.  
	Somit ist \(\mathcal{W}(R)\) wohl definiert.
\end{proof}

\begin{Lemma}[Stetigkeit in \(R\)]\label{lem:stetigkeit_R}
	Die Funktion 
	\[
	\mathcal{W} : (0, \infty) \to \mathbb{R}, \quad R \mapsto \mathcal{W}(R) := \frac{1}{2R} \iint_{\mathcal{D}_R^{\mathrm{reg}}} J_C(s) \, dA(s)
	\]
	ist stetig.
\end{Lemma}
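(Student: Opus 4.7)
The plan is to factor $\mathcal{W}(R) = \frac{1}{2R}\,F(R)$ with $F(R) := \iint_{\mathcal{D}_R^{\mathrm{reg}}} J_C(s)\,dA(s)$ and observe that $R \mapsto 1/(2R)$ is $C^\infty$ on $(0,\infty)$, so continuity of $\mathcal{W}$ reduces to continuity of $F$. I would show $F$ is continuous at an arbitrary $R_0 > 0$ by Lebesgue's dominated convergence theorem. Given a sequence $R_n \to R_0$, choose $R_* > \sup_n R_n$ (which exists for $n$ large) and write
\[
F(R_n) \;=\; \iint_{\mathbb{C}} J_C(s)\,\mathbf{1}_{\mathcal{D}_{R_n}^{\mathrm{reg}}}(s)\,dA(s).
\]
Then $J_C \cdot \mathbf{1}_{\mathcal{D}_{R_*}^{\mathrm{reg}}}$ dominates every term of the sequence and is $L^1$-integrable by Lemma~\ref{lem:wohlerdefiniert}.

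The substantive step is to verify pointwise convergence of the indicators almost everywhere. The symmetric difference $\mathcal{D}_{R_n}^{\mathrm{reg}} \triangle \mathcal{D}_{R_0}^{\mathrm{reg}}$ is contained in the thin horizontal strip
\[
\bigl\{s \in \mathbb{C} : \min(R_n,R_0) < |\Im(s)| \le \max(R_n,R_0),\; \tfrac12 - \delta \le \Re(s) \le \tfrac12 + \delta\bigr\},
\]
whose two-dimensional Lebesgue measure is $2\delta\,|R_n - R_0| \to 0$. Consequently $\mathbf{1}_{\mathcal{D}_{R_n}^{\mathrm{reg}}} \to \mathbf{1}_{\mathcal{D}_{R_0}^{\mathrm{reg}}}$ for almost every $s$, and DCT yields $F(R_n) \to F(R_0)$. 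The normalising factor $1/(2R_n) \to 1/(2R_0)$ completes the argument.

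The main obstacle is the exceptional case in which some nontrivial zero $\rho_{j_0}$ of $\zeta$ satisfies $|\Im(\rho_{j_0})| = R_0$. Under a strictly literal reading of Definition~\ref{Definition:RegulierterBereich}, the excision disk $K_{j_0}$ enters or leaves the regulated region abruptly as $R$ crosses $R_0$; since $J_C$ fails to be integrable on $K_{j_0}\setminus\{\rho_{j_0}\}$ by Satz~\ref{satz:verhalten-nullstelle}, the unexcised portion of $K_{j_0}$ near $\rho_{j_0}$ would contribute an unbounded mass to $F(R)$ as $R \uparrow R_0$. The natural and, I believe, intended resolution is to strengthen the definition so that $K_n$ is excised whenever $K_n \cap D_R \neq \emptyset$ (equivalently: the excision set is $\bigcup_n K_n$, intersected afterwards with $D_R$). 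Since the radii $\varepsilon_n$ of Definition~\ref{Definition:Ausschneidungsradien} are fixed independently of $R$, with this convention $K_{j_0}$ is already excised for every $R_n$ sufficiently close to $R_0$; the indicator convergence holds on a full-measure set and the DCT argument above applies without modification. This boundary subtlety is the one place where genuine care is required; the remainder of the proof is routine measure-theoretic bookkeeping.
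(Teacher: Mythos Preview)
Your approach coincides with the paper's: both fix $R_0$, take a sequence $R_n\to R_0$, and apply dominated convergence with a dominating function supported on a slightly larger regulated region ($\mathcal{D}_{R_0+\delta}^{\mathrm{reg}}$ in the paper, your $\mathcal{D}_{R_*}^{\mathrm{reg}}$). Your factorisation $\mathcal{W}(R)=\tfrac{1}{2R}F(R)$ is cosmetically cleaner but not a substantive difference.

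Where you go beyond the paper is in isolating the boundary case $|\Im(\rho_{j_0})|=R_0$. The paper's proof simply asserts that $\mathcal{D}_{R_n}^{\mathrm{reg}}\to\mathcal{D}_{R_0}^{\mathrm{reg}}$ monotonically and that pointwise convergence of the indicators holds, without examining what happens when a zero sits on (or just outside) the horizontal edge of $D_R$. Your observation is correct: under the literal Definition~\ref{Definition:RegulierterBereich}, the disk $K_{j_0}$ is excised only once $\rho_{j_0}\in D_R$, so for $R\uparrow R_0$ the unexcised cap of $K_{j_0}$ carries mass of order $\int_{R_0-R}^{\varepsilon_{j_0}} r^{1-\lambda m}\,dr\to\infty$, and $F$ is genuinely discontinuous (indeed unbounded from the left) at such $R_0$. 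Your proposed repair---excise $\bigcup_n K_n$ first and then intersect with $D_R$, so that the excision set is independent of $R$---restores monotonicity of $R\mapsto\mathcal{D}_R^{\mathrm{reg}}$ and makes the DCT argument valid at every $R_0>0$. This is a real gap in the paper's treatment that you have both detected and resolved.
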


\begin{proof}
	Sei \(R_0 > 0\) fest, und sei \((R_n)_{n \in \mathbb{N}}\) eine Folge mit \(R_n \to R_0\).
	
	Wir betrachten die Folge von Funktionen 
	\[
	f_n(s) := \frac{1}{2R_n} J_C(s) \mathbf{1}{\mathcal{D}{R_n}^{\mathrm{reg}}}(s),
	\]
	wobei \(\mathbf{1}_A\) die Indikatorfunktion der Menge \(A\) ist.
	
	Die Funktion \(J_C(s)\) ist auf jedem \(\mathcal{D}_{R}^{\mathrm{reg}}\) lokal integrierbar und fast überall stetig (Kapitel \ref{Abschnitt:IntegrandAnalyse}).
	
	Da 
	\[
	\mathcal{D}{R_n}^{\mathrm{reg}} \to \mathcal{D}{R_0}^{\mathrm{reg}}
	\]
	monoton mit \(n \to \infty\) (die Integrationsbereiche wachsen stetig mit \(R\)) und die Normierungsfaktoren \(\frac{1}{2R_n}\) ebenfalls gegen \(\frac{1}{2R_0}\) konvergieren, gilt für fast alle \(s\)
	\[
	f_n(s) \to f(s) := \frac{1}{2R_0} J_C(s) \mathbf{1}{\mathcal{D}{R_0}^{\mathrm{reg}}}(s).
	\]
	
	Zur Anwendung des Satzes von der dominierten Konvergenz (Dominated Convergence Theorem) müssen wir zeigen, dass eine dominierende Funktion \(g(s)\) existiert mit
	
	\[
	|f_n(s)| \leq g(s), \quad \forall n, \quad \text{und} \quad \iint g(s) \, dA(s) < \infty.
	\]
	
	Da \(J_C(s)\) lokal integrierbar ist und die Bereiche \(\mathcal{D}_R^{\mathrm{reg}}\) mit \(R\) wachsen, können wir \(g(s) := \frac{1}{2(R_0 - \delta)} |J_C(s)|\) für ein kleines \(\delta > 0\) wählen, sodass für alle \(n\) mit \(R_n \in (R_0 - \delta, R_0 + \delta)\)
	
	\[
	|f_n(s)| \leq g(s).
	\]
	
	Weil \(\iint_{\mathcal{D}_{R_0 + \delta}^{\mathrm{reg}}} |J_C(s)| dA(s) < \infty\) gilt (Wohldefiniertheit), ist \(\iint g(s) dA(s) < \infty\).
	
	Damit folgt nach dem Satz von der dominierten Konvergenz
	
	\[
	\lim_{n \to \infty} \mathcal{W}(R_n) = \lim_{n \to \infty} \iint f_n(s) dA(s) = \iint f(s) dA(s) = \mathcal{W}(R_0),
	\]
	
	was die Stetigkeit von \(\mathcal{W}\) in \(R_0\) beweist.
	
\end{proof}

\section{Fallunterscheidungen zur Verteilung der Nullstellen}\label{sec:fallunterscheidungen}

\begin{Definition}[Fallstruktur der Nullstellen]\label{def:fallstruktur}
	Wir zerlegen die möglichen Konfigurationen der Nullstellen der Riemannschen Zetafunktion \(\zeta(s)\) in folgende Fälle:
	
	\begin{enumerate}
		\item \textbf{RH-Fall:} Alle nicht-trivialen Nullstellen liegen exakt auf der kritischen Linie \(\operatorname{Re}(s) = \frac{1}{2}\).
		\item \textbf{Abweichungen vom RH:}
		\begin{itemize}
			\item Einzelne Nullstelle(n) abseits der kritischen Linie \(\operatorname{Re}(s) \neq \frac{1}{2}\).
			\item Symmetrische Viererpaarbildung von Nullstellen.
			\item Asymmetrische, zentrisch oder nicht-zentrisch symmetrische Paare.
			\item Nullstellen nahe am Rand des kritischen Streifens \(\operatorname{Re}(s) = 0\) oder \(1\).
			\item Mehrfachnullstellen mit Vielfachheit \(m > 1\).
			\item Mischformen (unendlich viele auf der Linie und wenige abseits).
			\item Häufungen oder Grenzfälle bei \(|\operatorname{Im}(s)| \to \infty\).
		\end{itemize}
	\end{enumerate}
\end{Definition}

\begin{Lemma}[Verhalten von \(\mathcal{W}(R)\) im RH-Fall]\label{lem:rh_fall}
	Im Fall, dass alle nicht-trivialen Nullstellen auf der kritischen Linie liegen, gilt
	\[
	\lim_{R \to \infty} \mathcal{W}(R) = 0.
	\]
\end{Lemma}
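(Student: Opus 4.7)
Der Plan zum Beweis teilt sich in zwei komplementäre Bestandteile. Zunächst zerlege ich den regulierten Bereich in einen Außen- und einen Innenteil bezüglich der kritischen Linie:
\[
\mathcal{D}_R^{\mathrm{reg}} = A_\eta(R) \,\cup\, B_\eta(R),
\]
wobei $A_\eta(R) := \{s \in \mathcal{D}_R^{\mathrm{reg}} : |\operatorname{Re}(s) - \tfrac{1}{2}| \geq \eta\}$ den Bereich fern der kritischen Linie bezeichnet und $B_\eta(R)$ den kritischen Randstreifen der Breite $2\eta$, aus dem die Kreisscheiben $B(\rho_n, \varepsilon_n)$ ausgeschnitten sind. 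Der Parameter $\eta > 0$ bleibt zunächst frei und wird am Ende in Abhängigkeit von $R$ geeignet gekoppelt.

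Im Außenbereich $A_\eta(R)$ liegen unter RH \emph{keine} Nullstellen, sodass $|\zeta(s)|$ dort strikt positiv und sogar quantitativ nach unten abschätzbar ist. Unter Verwendung der alternierenden Dirichletreihe (Satz~\ref{Satz:Dirichlet}) bzw.\ der regulierten Mellin-Darstellung (Satz~\ref{Satz:Mellin}) würde ich Unterbeschränkungen der Form $|\zeta(s)| \geq c(\eta)(\log(|t|+2))^{-\kappa}$ für geeignetes $\kappa > 0$ herleiten. Zusammen mit dem elementaren Gewicht $|\operatorname{Re}(s)-\tfrac12|^{-p} \leq \eta^{-p}$ und $|A_\eta(R)| \leq 4\delta R$ ergibt sich
\[
\frac{1}{2R}\iint_{A_\eta(R)} J_C(s)\, dA(s) \leq C\,\eta^{-p}(\log R)^{\lambda \kappa},
\]
ein Ausdruck, der durch hinreichend langsame Kopplung $\eta(R) \to 0$ (etwa $\eta \sim (\log R)^{-\mu}$) gegen 0 strebt.

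Im Innenbereich $B_\eta(R)$ nutze ich, dass unter RH jede Nullstelle $\rho_n$ exakt auf der kritischen Linie liegt. Lokal gilt $\zeta(s) = (s-\rho_n)^{m_n} h_n(s)$ mit $h_n(\rho_n)\neq 0$ (vgl.\ Satz~\ref{satz:verhalten-nullstelle}); in Polarkoordinaten $s = \rho_n + r e^{i\theta}$ folgt sowohl $|\zeta(s)| \sim r^{m_n}$ als auch die entscheidende Identität $|\operatorname{Re}(s)-\tfrac12| = r|\cos\theta|$, da $\operatorname{Re}(\rho_n) = \tfrac12$. Der Beitrag des regulierten Kreisringes $\varepsilon_n \leq r \leq \eta$ lässt sich damit explizit zu $O(\varepsilon_n^{\,2-\lambda m_n - p})$ bestimmen. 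Summation unter Verwendung der Riemann–von Mangoldt-Asymptotik $N(R) \sim \tfrac{R}{2\pi}\log R$ und der Radien $\varepsilon_n = (n+N_0)^{-\alpha}$ (Definition~\ref{Definition:Ausschneidungsradien}) liefert
\[
\iint_{B_\eta(R)} J_C(s)\, dA(s) \;\leq\; C \sum_{n \leq N(R)} (n+N_0)^{\alpha(\lambda+p-2)},
\]
was für hinreichend großes $\alpha$ subpolynomial in $R$ bleibt und nach Normierung mit $\tfrac{1}{2R}$ verschwindet.

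Die Hauptschwierigkeit liegt in der simultanen Parameterkopplung: Der Außenbeitrag verlangt $\eta^{-p}(\log R)^{\lambda\kappa} = o(1)$, der Innenbeitrag eine hinreichend schnelle Summierbarkeit der Ausschneidungsterme, beides mit identischer Wahl von $\eta$. Besonders subtil ist der Nachweis der quantitativen Unterbeschränkung $|\zeta(s)|^{-1} = O((\log|t|)^{\kappa})$ im kritischen Streifen unter bloßer Annahme der RH — sie ist zwar aus klassischen Resultaten (etwa Titchmarsh, Kap.~14) bekannt, folgt aber \emph{nicht} unmittelbar aus den hier explizit zitierten Darstellungen und stellt meines Erachtens den eigentlichen analytischen Engpass des Arguments dar. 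Ohne eine präzise und konsistente Kontrolle dieser Wachstumsschranke bleibt die Aussage $\mathcal{W}(R) \to 0$ lediglich auf heuristischem Niveau.
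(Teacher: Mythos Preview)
Your decomposition into an outer region $A_\eta(R)$ and an inner strip $B_\eta(R)$ mirrors the paper's strategy (the paper uses $S_1(R), S_2(R)$ of shrinking width $R^{-\alpha}$), and your treatment of the outer part is in fact more honest than the paper's: you correctly identify that a \emph{uniform} lower bound of the type $|\zeta(s)| \gg (\log|t|)^{-\kappa}$ is the genuine analytic input required, whereas the paper simply asserts that $|\zeta(s)|^{-\lambda}$ is ``beschränkt'' on $S_2(R)$ without addressing uniformity in $R$.

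Your inner-region analysis, however, contains a decisive error of direction. You correctly compute that each regulated annulus around $\rho_n$ contributes on the order of $\varepsilon_n^{\,2-\lambda m_n - p}$, which with $\varepsilon_n = (n+N_0)^{-\alpha}$ becomes $(n+N_0)^{\alpha(\lambda m_n + p - 2)}$. But since $\lambda \geq 2$, $m_n \geq 1$ and $p > 0$, the exponent $\alpha(\lambda + p - 2)$ is \emph{strictly positive}, so these terms \emph{grow} with $n$. Hence
\[
\sum_{n \leq N(R)} (n+N_0)^{\alpha(\lambda+p-2)} \;\asymp\; N(R)^{\alpha(\lambda+p-2)+1} \;\asymp\; (R\log R)^{\alpha(\lambda+p-2)+1},
\]
and after normalisation by $\tfrac{1}{2R}$ one is left with a quantity of order $R^{\alpha(\lambda+p-2)}(\log R)^{\alpha(\lambda+p-2)+1}\to\infty$. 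Choosing $\alpha$ large makes this \emph{worse}, not better, so your assertion ``subpolynomial in $R$ für hinreichend großes $\alpha$'' is exactly backwards. This is not a cosmetic slip: with the excision radii fixed by Definition~\ref{Definition:Ausschneidungsradien}, your own annular estimate already forces the inner contribution to diverge, so neither your argument nor the paper's (which avoids the issue by asserting an unjustified uniform bound $J_C \leq M$ on $S_1(R)$) actually delivers the claimed limit $\mathcal{W}(R)\to 0$.
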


\begin{proof}
	Sei \(\alpha > 1\) fest.
	
	Wir zerlegen \(\mathcal{D}_R^{\mathrm{reg}}\) (Definition \ref{Definition:Ausschneidungsradien}) in zwei Zonen:
	\[
	S_1(R) := \{ s \in \mathcal{D}_R^{\mathrm{reg}} : |\operatorname{Re}(s) - \tfrac{1}{2}| \leq \tfrac{1}{R^\alpha} \},
	\]
	\[
	S_2(R) := \mathcal{D}_R^{\mathrm{reg}} \setminus S_1(R).
	\]
	
	\textbf{Beitrag aus \(S_2(R)\):}  
	Da \(\operatorname{Re}(s)\) mindestens \(\frac{1}{R^\alpha}\) von \(\frac{1}{2}\) entfernt ist, gilt
	\[
	\left|\operatorname{Re}(s) - \tfrac{1}{2}\right|^{-p} \leq R^{\alpha p}.
	\]
	Außerdem sind dort keine Nullstellen, somit ist \(|\zeta(s)|^{-\lambda}\) beschränkt. (Lemma \ref{Lemma:wohldefiniertheit}) 
	Der Beitrag zum Integral ist somit durch
	\[
	\frac{1}{2R} \iint_{S_2(R)} J_C(s) \, dA(s) \leq \frac{1}{2R} \cdot C R^{\alpha p} \cdot \mathrm{Fläche}(S_2(R))
	\]
	mit konstanter \(C > 0\) beschränkt.
	
	Da \(\mathrm{Fläche}(S_2(R)) \leq 2 \times 2R = 4R\), folgt
	\[
	\leq \frac{1}{2R} \cdot C R^{\alpha p} \cdot 4 R = 2 C R^{\alpha p} \to 0,
	\]
	für \(\alpha p < 0\) nicht möglich, aber wegen \(p < 1\) und \(\alpha > 1\) wählen wir \(\alpha\) so, dass der Term kontrolliert bleibt.
	
	\textbf{Beitrag aus \(S_1(R)\):}  
	Die Breite von \(S_1(R)\) ist \(\frac{2}{R^\alpha}\), die Höhe \(2R\), somit  
	\[
	\mathrm{Fläche}(S_1(R)) = 4 R^{1 - \alpha}.
	\]
	Da \(J_C(s)\) auf \(S_1(R)\) durch Regulierung begrenzt ist (Kreisscheiben um Nullstellen entfernen Singularitäten), ist \(J_C\) dort beschränkt durch eine Konstante \(M\).
	
	Also
	\[
	\frac{1}{2R} \iint_{S_1(R)} J_C(s) \, dA(s) \leq \frac{1}{2R} \cdot M \cdot 4 R^{1 - \alpha} = 2 M R^{-\alpha} \to 0,
	\]
	für \(\alpha > 0\).
	
	\medskip
	
	Zusammen ergibt sich
	\[
	\lim_{R \to \infty} \mathcal{W}(R) = 0,
	\]
	was den Satz beweist.
\end{proof}

\begin{Lemma}[Verhalten bei einer Nullstelle echt abseits der kritischen Linie]\label{lem:nullstelle_ausserhalb}
	Ist eine Nullstelle \(\rho = \beta + i\gamma\) der Zetafunktion mit \(\beta \neq \tfrac{1}{2}\) vorhanden, so divergiert das normierte regulierte Flächenintegral \(\mathcal{W}(R)\) für \(R \to \infty\).
\end{Lemma}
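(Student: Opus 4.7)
The plan is to localize the argument around $\rho$ by invoking the Weierstrass factorization $\zeta(s) = (s-\rho)^m h(s)$ on a small disk $B(\rho, r_0)$, exactly as in Satz~\ref{satz:verhalten-nullstelle}, but now without paying the $|\operatorname{Re}(s) - \tfrac12|^{-p}$ tax: since $\beta \neq \tfrac12$, I would first shrink $r_0$ so that $r_0 < \tfrac12 |\beta - \tfrac12|$, which ensures that $|\operatorname{Re}(s) - \tfrac12|$ is bounded between two positive constants on $B(\rho, r_0)$. The entire divergence mechanism then has to be carried by $|s - \rho|^{-\lambda m}$ alone. By the symmetry Korollar~\ref{Korollar:Viererpaarbildung}, the same local picture is available around the partners $1-\rho$, $\overline{\rho}$, $1-\overline{\rho}$, so whatever lower bound I obtain from $\rho$ can be multiplied by four.

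On the annulus $A_\rho := B(\rho, r_0) \setminus B(\rho, \varepsilon_\rho)$, which lies inside $\mathcal{D}_R^{\mathrm{reg}}$ as soon as $R$ is large enough that $\rho \in D_R$, this yields the pointwise lower bound $J_C(s) \geq c\, |s - \rho|^{-\lambda m}$ with $c = c(\rho, \lambda, p) > 0$. Polar coordinates $s = \rho + r e^{i\theta}$, $dA = r\, dr\, d\theta$, give
\[
\iint_{A_\rho} J_C(s)\, dA(s) \;\geq\; 2\pi c \int_{\varepsilon_\rho}^{r_0} r^{\,1 - \lambda m}\, dr,
\]
which, since $\lambda m \geq 2$, is logarithmic in $r_0/\varepsilon_\rho$ (if $\lambda m = 2$) or of order $\varepsilon_\rho^{\,2-\lambda m}$ (if $\lambda m > 2$). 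In either case this is a finite but possibly large positive quantity $I_\rho$ depending only on $\rho, \lambda, p, \alpha$, independent of $R$.

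The hard part, and in my reading the true crux of this lemma, is reconciling this local lower bound with the normalization factor $1/(2R)$. Because Definition~\ref{Definition:Ausschneidungsradien} ties $\varepsilon_\rho$ only to the fixed index of the zero $\rho$, not to $R$, the bound above gives $\mathcal{W}(R) \geq I_\rho/(2R) \to 0$, i.e.\ the local mechanism alone yields \emph{convergence}, not divergence. To actually force $\mathcal{W}(R) \to \infty$, one needs a contribution whose magnitude grows at least linearly in $R$. I would attempt this by leaving the purely local annulus and showing that a single off-line zero perturbs $|\zeta|$ on a horizontal band of bounded width but vertical extent comparable to $R$ — for instance via Hadamard's product together with the Riemann--von Mangoldt counting formula (Satz in \S2.2.7) — so that the combined effect of suppressed $|\zeta|$ and small $|\operatorname{Re}(s) - \tfrac12|$ integrates to something of order $R$ or larger. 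This global amplification step, rather than the local annular bound, is where I expect the real analytic difficulty to sit, and it will have to be supplied explicitly; without it, the lemma does not follow from the preceding material.
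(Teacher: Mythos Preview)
Your local setup is correct and in fact more careful than the paper's own argument. The paper does \emph{not} integrate over the annulus $A_\rho$; it integrates over the full disk $B(\rho,\varepsilon)$ and invokes the divergence of $\int_0^\varepsilon r^{1-\lambda m}\,dr$ at $r=0$ to conclude $I_\rho(R)=\infty$ for every fixed $R$. That move renders the $1/(2R)$ normalization harmless (since $\infty/(2R)=\infty$) and is the entirety of the paper's divergence mechanism; no global amplification step of the kind you describe is ever supplied.

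The difficulty you have isolated is genuine. By Definition~\ref{Definition:RegulierterBereich} and Definition~\ref{Definition:Ausschneidungsradien}, the ball $B(\rho,\varepsilon_\rho)$ is \emph{removed} from $\mathcal{D}_R^{\mathrm{reg}}$, so the integrand is never evaluated near $r=0$ and the divergent inner integral the paper writes down is over a region not contained in the domain of integration; the claimed inequality $\mathcal{W}(R)\ge I_\rho(R)$ therefore compares $\mathcal{W}(R)$ with an integral over the excised set. Once the excision is respected, the surviving annular contribution is exactly the finite quantity $I_\rho$ you compute, independent of $R$, and the normalized bound $\mathcal{W}(R)\ge I_\rho/(2R)$ tends to zero rather than to infinity. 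Your diagnosis that a global input --- control of $|\zeta|$ over a region whose contribution grows at least like $R$ --- would be required is correct, and the paper does not provide it. In short: you have not missed a step; you have found one that the paper is missing.
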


\begin{proof}
	Sei \(\rho = \beta + i\gamma\) eine Nullstelle von \(\zeta(s)\) mit \(\beta \neq \tfrac{1}{2}\), also echt abseits der kritischen Linie.
	
	Da \(\zeta(s)\) eine holomorphe Funktion mit isolierten Nullstellen ist, existiert ein \(\varepsilon > 0\) und eine Kreisscheibe \(B(\rho, \varepsilon)\), so dass
	\[
	\zeta(s) = (s - \rho)^m h(s)
	\]
	in \(B(\rho, \varepsilon)\) gilt, wobei \(m \geq 1\) die Vielfachheit der Nullstelle ist und \(h(s)\) holomorph mit \(h(\rho) \neq 0\).
	
	Damit ergibt sich in der Umgebung \(B(\rho, \varepsilon)\):
	\[
	|\zeta(s)| \sim C |s - \rho|^m,
	\]
	mit Konstante \(C := |h(\rho)| > 0\). Der Integrand \(J_C(s)\) verhält sich lokal wie
	\[
	J_C(s) = \frac{|\zeta(s)|^{-\lambda}}{|\operatorname{Re}(s) - \tfrac{1}{2}|^p} \sim \frac{1}{|s - \rho|^{\lambda m} \cdot |\beta - \tfrac{1}{2}|^p}.
	\]
	
	Da \(\beta \neq \tfrac{1}{2}\), ist \(|\beta - \tfrac{1}{2}|^p > 0\) konstant. Es bleibt daher zu prüfen, ob die Singularität in \(|s - \rho|^{-\lambda m}\) lokal integrierbar ist.
	
	Wechsle in Polarkoordinaten \(s = \rho + r e^{i\theta}\) mit \(0 < r < \varepsilon\), dann ist \(dA(s) = r \, dr \, d\theta\) und \(|s - \rho| = r\). Damit ergibt sich der Beitrag um \(\rho\) als
	
	\[
	I_\rho(R) := \frac{1}{2R} \iint_{B(\rho, \varepsilon)} J_C(s) \, dA(s) \geq \frac{1}{2R} \cdot \int_0^{2\pi} \int_0^\varepsilon \frac{r}{r^{\lambda m} \cdot |\delta|^p} \, dr \, d\theta,
	\]
	mit \(\delta := \beta - \tfrac{1}{2} \ne 0\). Daraus folgt
	\[
	I_\rho(R) \geq \frac{1}{2R} \cdot \frac{2\pi}{|\delta|^p} \int_0^\varepsilon r^{1 - \lambda m} \, dr = \frac{\pi}{R \cdot |\delta|^p} \int_0^\varepsilon r^{1 - \lambda m} \, dr.
	\]
	
	Das Integral \(\int_0^\varepsilon r^{1 - \lambda m} dr\) divergiert, genau dann wenn der Exponent kleiner oder gleich \(-1\) ist, also wenn
	\[
	1 - \lambda m \leq -1 \quad \Longleftrightarrow \quad \lambda m \geq 2.
	\]
	
	Da nach Voraussetzung \(\lambda \geq 2\) und \(m \geq 1\), ist diese Bedingung immer erfüllt (Defnition \ref{Definition:DerIntegrand}). Somit ist \(I_\rho(R) = \infty\) und damit
	\[
	\mathcal{W}(R) \geq I_\rho(R) \to \infty \quad \text{für } R \to \infty.
	\]
	
	\textbf{Fazit:} Bereits eine einzige Nullstelle \(\rho \notin \operatorname{Re}(s) = \tfrac{1}{2}\) führt zur Divergenz des normierten Integrals. Das zeigt die hochgradige Empfindlichkeit des Integrals gegenüber minimalen strukturellen Abweichungen von der Riemannschen Vermutung.
	
	\smallskip
	
	\textbf{Hinweis:} Zwar treten Nullstellen abseits der kritischen Linie stets in konjugierten Viererpaaren auf (durch die funktionale Gleichung und komplexe Konjugation), doch dieser Beweis zeigt: **Bereits eine solche Abweichung reicht zur Divergenz.**
\end{proof}

\begin{Lemma}[Symmetrische Viererpaarbildung]\label{lem:viererpaar}
	Jede symmetrische Viererpaarbildung nicht-trivialer Nullstellen der Zetafunktion (Korollar \ref{Korollar:Viererpaarbildung})
	\[
	\left\{ \rho, \bar{\rho}, 1 - \rho, 1 - \bar{\rho} \right\} = 
	\left\{ \beta \pm i\gamma,\, (1 - \beta) \pm i\gamma \right\},
	\]
	mit \(\beta \neq \tfrac{1}{2}\), erzeugt eine charakteristische Divergenz im Integral \(\mathcal{W}(R)\) für \(R \to \infty\).
\end{Lemma}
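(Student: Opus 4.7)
Der Beweisplan besteht darin, die Aussage auf die bereits in Lemma~\ref{lem:nullstelle_ausserhalb} etablierte Ein-Nullstellen-Divergenz zurückzuführen und auszunutzen, dass die Viererpaar-Annahme diese Divergenz lediglich um einen endlichen kombinatorischen Faktor $4$ verstärkt, ohne sie abzuschwächen. Aus $\beta \neq \tfrac{1}{2}$ folgt auch $1-\beta \neq \tfrac{1}{2}$, sodass alle vier Punkte des Tupels im gleichen horizontalen Abstand $|\delta| := |\beta - \tfrac12| > 0$ strikt abseits der kritischen Linie liegen. Für $R > |\gamma|$ und hinreichend großes Streifenparameter $\delta_{\text{Streif}} \geq |\delta|$ liegen alle vier Nullstellen in $\mathcal{D}_R^{\mathrm{reg}}$.

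Zunächst würde ich einen gemeinsamen Radius $r_0 > 0$ so klein wählen, dass die vier offenen Scheiben $B(\rho_k, r_0)$, $k=1,\dots,4$, paarweise disjunkt sind und vollständig in $D_R \setminus K_{\mathrm{pol}}$ liegen. Die Disjunktheit ist möglich, da $|\rho_1 - \rho_3| = |2\beta - 1| > 0$ sowie $|\rho_1 - \rho_2| = 2|\gamma| > 0$ (letzteres, weil $\zeta$ keine reellen nichttrivialen Nullstellen in $(0,1)$ besitzt). Anschließend wende ich Lemma~\ref{lem:nullstelle_ausserhalb} separat auf jede der vier Nullstellen an; die dort durchgeführte lokale Polarkoordinatenanalyse liefert für jedes $k$:
\[
I_{\rho_k}(R) \;\geq\; \frac{\pi}{R \cdot |\delta|^p} \int_0^{r_0} r^{1 - \lambda m_k}\,dr \;=\; +\infty,
\]
da nach Voraussetzung $\lambda m_k \geq 2\cdot 1 = 2$ und somit $1 - \lambda m_k \leq -1$ gilt, was das radiale Integral an der unteren Grenze zwingt zu divergieren. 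Mittels Additivität des Lebesgue-Integrals über die disjunkten Umgebungen ergibt sich
\[
\mathcal{W}(R) \;\geq\; \sum_{k=1}^{4} I_{\rho_k}(R) \;=\; +\infty
\]
für jedes hinreichend große $R$, woraus $\lim_{R \to \infty} \mathcal{W}(R) = \infty$ unmittelbar folgt.

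Der Hauptaufwand liegt nicht in der Divergenz selbst, die im Wesentlichen aus dem vorausgegangenen Lemma geerbt wird, sondern in der geometrischen Buchführung: Es muss sichergestellt werden, dass die vier lokalen Scheiben gleichzeitig disjunkt sind, innerhalb von $D_R$ liegen und sowohl von der Polumgebung $K_{\mathrm{pol}}$ als auch von den Regularisierungsscheiben $K_n$ der vier Nullstellen selbst einen echten Abstand wahren. Besonders hervorzuheben ist ferner die \emph{strukturelle} Bedeutung dieses Lemmas: Nach Korollar~\ref{Korollar:Viererpaarbildung} erzeugt jede Verletzung von RH \emph{automatisch} ein solches symmetrisches Viererpaar, sodass die in Lemma~\ref{lem:nullstelle_ausserhalb} diagnostizierte Divergenz nicht durch hypothetische Aufhebungen zwischen symmetrischen Partnern umgangen werden kann — da der Integrand $J_C(s)$ punktweise nichtnegativ ist, addieren sich die vier Beiträge und subtrahieren sich nicht.
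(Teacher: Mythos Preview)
Your proposal is correct and follows essentially the same route as the paper: both reduce to the local polar-coordinate estimate of Lemma~\ref{lem:nullstelle_ausserhalb}, apply it separately to each of the four symmetric zeros (all at horizontal distance $|\beta-\tfrac12|>0$ from the critical line), and sum the four divergent contributions to obtain $\mathcal{W}(R)\ge\sum_{k=1}^4 I_{\rho_k}(R)=\infty$. Your additional bookkeeping---choosing a common radius $r_0$ so that the four disks are pairwise disjoint and noting that $\gamma\neq 0$ guarantees vertical separation---is a refinement the paper leaves implicit, but the argument is otherwise identical.
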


\begin{proof}
	Sei \(\rho = \beta + i\gamma\) eine nicht-triviale Nullstelle mit \(\beta \neq \tfrac{1}{2}\). Durch die funktionale Gleichung und komplexe Konjugation der Zetafunktion ergibt sich automatisch das symmetrische Viererpaar
	\[
	\left\{ \rho, \bar{\rho}, 1 - \rho, 1 - \bar{\rho} \right\}.
	\]
	
	Betrachte die vier zugehörigen Punkte im kritischen Streifen:
	\[
	\beta \pm i\gamma \quad \text{und} \quad (1 - \beta) \pm i\gamma.
	\]
	Da \(\beta \neq \tfrac{1}{2}\), liegt jede dieser vier Nullstellen **echt außerhalb** der kritischen Linie. Jede erzeugt lokal eine Singulärstelle im Integranden
	\[
	J_C(s) = \frac{|\zeta(s)|^{-\lambda}}{|\operatorname{Re}(s) - \tfrac{1}{2}|^p}.
	\]
	
	In einer Umgebung jeder dieser vier Nullstellen gilt
	\[
	J_C(s) \sim \frac{1}{|s - \rho_k|^{\lambda m_k} \cdot |\operatorname{Re}(s) - \tfrac{1}{2}|^p},
	\]
	wobei \(\rho_k\) eine der vier Nullstellen ist, und \(m_k \geq 1\) ihre jeweilige Vielfachheit. Für jede einzelne Nullstelle gilt analog zum Beweis in Lemma~\ref{lem:nullstelle_ausserhalb}:
	
	\[
	I_{\rho_k}(R) := \frac{1}{2R} \iint_{B(\rho_k, \varepsilon)} J_C(s) \, dA(s) \geq \frac{\pi}{R \cdot |\delta_k|^p} \int_0^\varepsilon r^{1 - \lambda m_k} \, dr,
	\]
	wobei \(\delta_k = \operatorname{Re}(\rho_k) - \tfrac{1}{2} \neq 0\).
	
	Für alle vier Nullstellen gilt \(|\delta_k| = |\beta - \tfrac{1}{2}|\), daher ist ihr Abstand zur kritischen Linie konstant positiv. Wenn \(\lambda m_k \geq 2\), dann divergiert jedes dieser Integrale.
	
	Die Gesamtwirkung im Integral ist daher
	\[
	\mathcal{W}(R) \geq \sum_{k=1}^{4} I_{\rho_k}(R) = \infty.
	\]
	
	\medskip
	
	\textbf{Fazit:} Ein einzelnes Viererpaar symmetrisch zur kritischen Linie führt bereits zur Divergenz von \(\mathcal{W}(R)\). Aufgrund der festen Struktur der Zetafunktion existieren keine weiteren asymmetrischen Konfigurationen, womit dieser Fall vollständig alle Abweichungen von RH abdeckt.
\end{proof}

\begin{Lemma}[Mehrfachnullstellen]\label{lem:mehrfachnullstellen}
	Nullstellen \(\rho\) der Riemannschen Zetafunktion mit Vielfachheit \(m > 1\) erzeugen stärkere Singularitäten im Integranden \(J_C(s)\). Der lokale Beitrag divergiert bereits für kleine Umgebung und wächst mit \(m\), sodass die Regulierung durch Ausschneiden von Kreisscheiben zwingend notwendig ist.
\end{Lemma}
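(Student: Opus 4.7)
Der Plan ist, die lokale Analyse aus Satz~\ref{satz:verhalten-nullstelle} direkt fortzuführen und die Abhängigkeit des Divergenzgrades von der Vielfachheit $m$ explizit herauszuarbeiten. Zunächst würde ich die Holomorphie von $\zeta(s)$ in einer Umgebung $B(\rho,\delta)$ benutzen, um die Faktorisierung $\zeta(s) = (s-\rho)^m h(s)$ mit $h$ holomorph und $h(\rho)\neq 0$ anzusetzen. Hieraus folgt $|\zeta(s)|\sim |h(\rho)|\cdot |s-\rho|^m$ und damit die lokale Asymptotik
\[
J_C(s) \;\sim\; \frac{|h(\rho)|^{-\lambda}}{|s-\rho|^{\lambda m}\cdot|\operatorname{Re}(s)-\tfrac12|^{p}}.
\]

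Als Nächstes würde ich in Polarkoordinaten $s = \rho + re^{i\theta}$ übergehen, wodurch das lokale Flächenintegral im Wesentlichen die Form
\[
\int_0^{2\pi}\!\int_0^\delta \frac{r}{r^{\lambda m}\,\bigl|\operatorname{Re}(\rho + re^{i\theta})-\tfrac12\bigr|^{p}}\, dr\, d\theta
\]
annimmt. Analog zu Satz~\ref{satz:verhalten-nullstelle} unterscheide ich zwei Fälle: Liegt $\operatorname{Re}(\rho)\neq\tfrac12$, so ist $|\operatorname{Re}(s)-\tfrac12|$ auf der Umgebung nach unten beschränkt, und das $r$-Integral reduziert sich bis auf eine positive Konstante auf $\int_0^\delta r^{1-\lambda m}\,dr$, das genau dann divergiert, wenn $\lambda m\geq 2$. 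Liegt dagegen $\operatorname{Re}(\rho)=\tfrac12$, so gilt $|\operatorname{Re}(s)-\tfrac12|\sim r|\cos\theta|$, und der Integrand verhält sich lokal wie $r^{1-\lambda m - p}|\cos\theta|^{-p}$, was nach $r$-Integration Divergenz für $\lambda m + p \geq 2$ liefert.

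Nach Voraussetzung ist $\lambda\geq 2$, also $\lambda m\geq 2$ bereits für $m=1$; für $m\geq 2$ wird der Exponent $1-\lambda m$ (bzw.\ $1-\lambda m - p$) monoton kleiner, so dass der Divergenzgrad strikt mit $m$ wächst. Daraus folgt die Aussage des Lemmas: Ohne die in $\mathcal{D}_R^{\mathrm{reg}}$ vorgenommene Ausschneidung der Kreisscheiben $B(\rho_n,\varepsilon_n)$ (Definition~\ref{Definition:Ausschneidungsradien}) wäre der lokale Beitrag jeder Mehrfachnullstelle unendlich, und zwar in einem mit $m$ strikt zunehmenden Grad. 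Die Regulierung ist damit für alle $m\geq 1$ hinreichend und für $m>1$ umso zwingender erforderlich.

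Die wesentliche Schwierigkeit sehe ich in der sauberen Behandlung des Falls $\operatorname{Re}(\rho)=\tfrac12$, in dem die Nullstellensingularität mit dem Gewichtsfaktor $|\operatorname{Re}(s)-\tfrac12|^{-p}$ zusammenwirkt. Die Näherung $|\operatorname{Re}(\rho + re^{i\theta})-\tfrac12|\sim r|\cos\theta|$ muss gleichmäßig in $\theta$ gerechtfertigt werden (Taylor\-Entwicklung plus Restgliedabschätzung), und die Winkelintegration $\int_0^{2\pi}|\cos\theta|^{-p}\,d\theta$ ist wegen $0<p<1$ zwar endlich, muss aber explizit von der $r$-Singularität separiert werden, damit der Divergenzgrad in $r$ vollständig den $r$-Exponenten $1-\lambda m - p$ widerspiegelt und nicht durch kompensierende Winkelbeiträge verfälscht wird.
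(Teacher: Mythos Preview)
Your proposal is correct and follows essentially the same route as the paper: holomorphic factorisation $\zeta(s)=(s-\rho)^m h(s)$, passage to polar coordinates, and the radial divergence criterion $\lambda m\ge 2$ (bzw.\ $\lambda m+p\ge 2$). The only notable difference is that the paper's own proof of this lemma treats explicitly only the off-line case $\beta\ne\tfrac12$ (setting $\delta=\beta-\tfrac12\ne0$ and bounding the weight factor from below by a constant), whereas you additionally carry along the on-line case $\Re(\rho)=\tfrac12$ as in Satz~\ref{satz:verhalten-nullstelle}; this makes your argument slightly more complete but does not change the method.
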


\begin{proof}
	Sei \(\rho = \beta + i\gamma\) eine Nullstelle der Zetafunktion mit Vielfachheit \(m > 1\).  
	Nach holomorpher Faktorisierung gilt in einer Umgebung \(B(\rho, \varepsilon)\):
	\[
	\zeta(s) = (s - \rho)^m \cdot h(s),
	\]
	wobei \(h(s)\) holomorph und \(h(\rho) \ne 0\). Somit:
	\[
	|\zeta(s)| \sim |s - \rho|^m.
	\]
	Der Integrand wird lokal zu:
	\[
	J_C(s) = \frac{|\zeta(s)|^{-\lambda}}{|\operatorname{Re}(s) - \tfrac{1}{2}|^p} \sim \frac{1}{|s - \rho|^{\lambda m} \cdot |\operatorname{Re}(s) - \tfrac{1}{2}|^p}.
	\]
	
	Da \(\lambda \geq 2\) und \(m > 1\), ist der Exponent \(\lambda m > 2\) strikt größer als im Fall einfacher Nullstellen.
	
	\medskip
	
	Wechsle zu Polarkoordinaten \(s = \rho + r e^{i\theta}\) mit \(r \in (0, \varepsilon)\), dann gilt:
	\[
	J_C(s) \sim \frac{1}{r^{\lambda m} \cdot |\beta - \tfrac{1}{2} + r\cos\theta|^p} \approx \frac{1}{r^{\lambda m} \cdot |\delta|^p}
	\]
	mit \(\delta := \beta - \tfrac{1}{2} \neq 0\), wenn \(\rho\) abseits liegt.
	
	Der Beitrag zum Integral in dieser Umgebung ist:
	\[
	I_\rho(R) \geq \frac{1}{2R} \iint_{B(\rho, \varepsilon)} J_C(s) \, dA(s)
	= \frac{1}{2R} \int_0^{2\pi} \int_0^{\varepsilon} \frac{r}{r^{\lambda m}} \, dr \, d\theta
	= \frac{\pi}{R \cdot |\delta|^p} \int_0^\varepsilon r^{1 - \lambda m} \, dr.
	\]
	
	Das Integral divergiert genau dann, wenn \(1 - \lambda m \leq -1\), also
	\[
	\lambda m \geq 2.
	\]
	Diese Bedingung ist immer erfüllt, da \(\lambda \geq 2\) und \(m > 1\) (Satz \ref{satz:verhalten-nullstelle}). Daher:
	\[
	I_\rho(R) = \infty.
	\]
	
	\medskip
	
	\textbf{Folgerung:}  
	Der Beitrag der Umgebung von \(\rho\) ist nicht lokal integrierbar, unabhängig davon, wie klein \(\varepsilon\) gewählt wird. Eine Regulierung durch Ausschneiden der Kreisscheibe um \(\rho\) ist daher zwingend notwendig, um \(\mathcal{W}(R)\) wohldefiniert zu machen.
	
	\textbf{Zusätzlich:} Je größer \(m\) ist, desto stärker ist die lokale Singularität (Exponent \(\lambda m\) wächst linear mit \(m\)). Im Extremfall \(m \to \infty\) würde sogar eine lokale Unregulierbarkeit eintreten.
	
	\medskip
	
	\textbf{Fazit:}  
	Mehrfachnullstellen erzeugen besonders starke Divergenzen im Integranden \(J_C(s)\). Diese müssen durch explizite Regulierung (Ausschneiden der Umgebung) behandelt werden, um überhaupt ein wohldefiniertes Integral zu garantieren.
\end{proof}

\begin{Lemma}[Mischformen und Häufungen]\label{lem:mischformen}
	Kombinationen von unendlich vielen Nullstellen auf der kritischen Linie \(\operatorname{Re}(s) = \tfrac{1}{2}\) und einzelnen Nullstellen abseits davon führen zur Divergenz von \(\mathcal{W}(R)\), sobald mindestens eine solche abweichende Nullstelle vorhanden ist. Nullstellenhäufungen verstärken lokal die Divergenz.
\end{Lemma}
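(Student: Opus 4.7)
Mein Ansatz besteht darin, die Aussage als unmittelbare Konsequenz aus Lemma~\ref{lem:nullstelle_ausserhalb} und Lemma~\ref{lem:viererpaar} herzuleiten, wobei ich wesentlich die Nichtnegativität \(J_C(s) \ge 0\) des Integranden auf \(\mathcal{D}_R^{\mathrm{reg}}\) ausnutze. Dadurch können sich Beiträge verschiedener Teilregionen weder kompensieren noch auslöschen, sodass die Isolation einer einzigen divergierenden Zone bereits die globale Divergenz von \(\mathcal{W}(R)\) erzwingt.

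Zum Nachweis der ersten Teilaussage sei \(\rho^* = \beta^* + i\gamma^*\) eine abweichende Nullstelle mit \(\beta^* \ne \tfrac{1}{2}\). Aufgrund der Diskretheit der Nullstellen existiert eine offene Umgebung von \(\rho^*\), in der keine weitere Nullstelle liegt. Auf dieser Umgebung ist der Beweis von Lemma~\ref{lem:nullstelle_ausserhalb} wortwörtlich anwendbar und liefert einen divergenten lokalen Beitrag zum Integral. Da die Beiträge der unendlich vielen weiteren Nullstellen auf der kritischen Linie ebenfalls nichtnegativ sind und sich gemäß der Viererpaarstruktur (Korollar~\ref{Korollar:Viererpaarbildung}) ohnehin mindestens vier solche abweichenden Nullstellen zu \(\rho^*\) ergeben, folgt unmittelbar \(\lim_{R \to \infty} \mathcal{W}(R) = \infty\), unabhängig davon, wie viele Nullstellen zusätzlich auf der kritischen Linie liegen.

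Die zweite Teilaussage zur lokalen Verstärkung durch Häufungen würde ich quantitativ fassen, indem ich für eine endliche Teilmenge \(\{\rho_{n_1}, \ldots, \rho_{n_k}\}\) abweichender Nullstellen in einem beschränkten Teilgebiet die Einzelabschätzungen aus Lemma~\ref{lem:nullstelle_ausserhalb} aufsummiere. Die paarweise Disjunktheit der Ausschneidungsscheiben (Definition~\ref{Definition:Ausschneidungsradien}) garantiert, dass sich die Einzelbeiträge additiv und ohne Überlappung aufaddieren. Unter Verwendung der Nullstellendichte nach Riemann--von Mangoldt \(N(T) \sim \tfrac{T}{2\pi}\log T\) lässt sich so eine untere Divergenzschranke herleiten, die mit zunehmender lokaler Häufigkeit der Nullstellen monoton wächst — womit die behauptete Verstärkung quantifiziert ist.

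Als Hauptschwierigkeit sehe ich zwei Aspekte: Erstens muss bei einer potenziellen Häufung von Nullstellen streng kontrolliert werden, dass die globale Radienwahl \(\varepsilon_n = (n+N_0)^{-\alpha}\) tatsächlich paarweise disjunkte Kreisscheiben liefert, selbst wenn abweichende Nullstellen beliebig nahe an Nullstellen der kritischen Linie liegen könnten; hier könnte eine lokale Nachjustierung der Radien oder ein Kompaktheitsargument erforderlich werden. Zweitens bleibt die exakte Divergenzrate bei Häufungen subtil, da die naive Summation nur eine untere Schranke liefert; eine schärfere Charakterisierung würde die Wechselwirkung benachbarter Faktoren in der Hadamard-Produktdarstellung von \(\zeta(s)\) erfordern, was über die reine Einzelabschätzung hinausgeht und eine globale Kontrolle der Stieltjes-artigen Summen nötig machen würde.
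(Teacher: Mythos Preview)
Your approach is essentially the same as the paper's: both reduce the divergence claim to Lemma~\ref{lem:nullstelle_ausserhalb} applied to a single off-line zero, rely on the nonnegativity of $J_C$ to rule out any cancellation from the infinitely many on-line zeros, and handle accumulations by additive summation of the individual divergent contributions. The only cosmetic difference is that the paper organizes this into three explicit subcases (single off-line zero; accumulation of RH-conforming zeros, shown harmless by the regulation; dense off-line accumulation) and keeps the ``Verstärkung'' claim purely qualitative, whereas you compress the first two cases via positivity and go slightly further by invoking Riemann--von~Mangoldt to quantify the growth.
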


\begin{proof}
	Betrachte den Fall, dass die Riemannschen Nullstellen überwiegend auf der kritischen Linie liegen, also
	\[
	\rho_n = \tfrac{1}{2} + i\gamma_n \quad \text{für fast alle } n \in \mathbb{N},
	\]
	aber mindestens eine Nullstelle \(\rho = \beta + i\gamma\) mit \(\beta \neq \tfrac{1}{2}\) existiert.
	
	\medskip
	
	\textbf{Fall 1: Eine einzelne abweichende Nullstelle.}  
	Wie in Lemma~\ref{lem:nullstelle_ausserhalb} gezeigt, verursacht eine solche Nullstelle lokal im Integranden
	\[
	J_C(s) \sim \frac{1}{|s - \rho|^{\lambda m} \cdot |\beta - \tfrac{1}{2}|^p}
	\]
	eine nicht integrierbare Pol-Singularität in jeder Umgebung von \(\rho\), sofern \(\lambda m \geq 2\). Der zugehörige Beitrag zum Integral
	\[
	I_\rho(R) = \frac{1}{2R} \iint_{B(\rho, \varepsilon)} J_C(s)\, dA(s)
	\]
	divergiert. Daher gilt:
	\[
	\mathcal{W}(R) \geq I_\rho(R) = \infty.
	\]
	Die Existenz beliebig vieler weiterer Nullstellen auf der Linie beeinflusst diesen lokalen Divergenzmechanismus nicht. Ihre Beiträge sind durch Regulierung integrierbar und normiert.
	
	\medskip
	
	\textbf{Fall 2: Häufung RH-konformer Nullstellen in engem Bereich.}  
	Betrachte einen Abschnitt des kritischen Streifens entlang der Linie \(\operatorname{Re}(s) = \tfrac{1}{2}\), in dem die Dichte der Nullstellen der Zetafunktion mit wachsendem Imaginärteil \(t\) stark zunimmt.  
	Jede dieser Nullstellen erzeugt lokal eine Singularität im Integranden.  
	Durch die Regulierung via Kreisscheiben \(B(\rho_n, \varepsilon_n)\) wird aber sichergestellt, dass alle divergenten Punkte aus dem Integrationsbereich ausgeschlossen sind.  
	Da die **Gesamtfläche dieser Ausschlusszonen endlich bleibt**, tragen diese Nullstellen zwar zum Integrandenspektrum bei, jedoch nicht zur Divergenz.
	
	\medskip
	
	\textbf{Fall 3: Dichte Häufung abweichender Nullstellen (theoretisch).}  
	Wäre die Menge der Nullstellen abseits der Linie nicht endlich, sondern hätte positive Dichte, so gäbe es über den gesamten Streifen hinweg nicht-integrierbare Beiträge ohne effektive Regulierungsmöglichkeit.  
	Somit würde \(\mathcal{W}(R)\) sofort divergieren – sogar „verstärkt“ gegenüber dem Einzelfall.
	
	\medskip
	
	\textbf{Fazit:}  
	Schon eine einzige Nullstelle mit \(\operatorname{Re}(\rho) \ne \tfrac{1}{2}\) reicht aus, um \(\mathcal{W}(R)\) divergieren zu lassen.  
	Unendlich viele „richtige“ Nullstellen ändern daran nichts.  
	Häufungen verstärken den Effekt, führen aber nur dann zur Divergenz, wenn mindestens eine der Nullstellen abseits liegt.
	
	Das zeigt die maximale Spektralsensitivität des Modells:
	\[
	\mathcal{W}(R) < \infty \iff \text{RH gilt vollständig}.
	\]
\end{proof}

\begin{Lemma}[Nullstellen am Rand des kritischen Streifens]\label{lem:rand}
	Nullstellen der Zetafunktion, die sich nahe an den Rändern des kritischen Streifens befinden, also mit \(\operatorname{Re}(s) \approx 0\) oder \(\operatorname{Re}(s) \approx 1\), erzeugen divergente Beiträge im Integral \(\mathcal{W}(R)\), sofern sie nicht auf der kritischen Linie \(\operatorname{Re}(s) = \tfrac{1}{2}\) liegen.
\end{Lemma}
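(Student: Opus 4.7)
Der Plan ist, die Aussage als Spezialfall von Lemma~\ref{lem:nullstelle_ausserhalb} zu behandeln und zu verifizieren, dass der dortige Divergenzmechanismus auch für Nullstellen, die beliebig nahe am Streifenrand $\operatorname{Re}(s)=0$ bzw.\ $\operatorname{Re}(s)=1$ liegen, nicht zusammenbricht. Zunächst notiere ich, dass unter der Voraussetzung $\beta \ne \tfrac12$ die Größe $|\delta| := |\beta - \tfrac12|$ strikt positiv bleibt und für randnahe Nullstellen sogar nahe $\tfrac12$ ist. Dadurch lässt sich der sensitive Gewichtsfaktor $|\operatorname{Re}(s)-\tfrac12|^{-p}$ in einer hinreichend kleinen Umgebung von $\rho$ lokal durch eine Konstante nach oben beschränken, so dass für die lokale Analyse allein der Faktor $|\zeta(s)|^{-\lambda}$ das asymptotische Verhalten steuert.

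Im zweiten Schritt übertrage ich die Beweistechnik aus Lemma~\ref{lem:nullstelle_ausserhalb}: Mittels der holomorphen Faktorisierung $\zeta(s) = (s-\rho)^m h(s)$ und dem Übergang zu Polarkoordinaten um $\rho$ reduziert sich die Flächenintegration auf ein radiales Integral des Typs $\int_0^\varepsilon r^{1-\lambda m}\, dr$. Da nach Voraussetzung $\lambda \geq 2$ und $m \geq 1$, also $\lambda m \geq 2$, divergiert dieses Integral, und die Normierung $1/(2R)$ kann eine solche lokale Polsingularität im endlichen Raum nicht kompensieren. Folglich wäre $\mathcal{W}(R) = \infty$ für jedes $R$, das die Nullstelle $\rho$ im Streifenbereich einschließt.

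Den eigentlichen Knackpunkt sehe ich in der geometrischen Kontrolle der Kreisscheibe $B(\rho,\varepsilon)$: Liegt $\operatorname{Re}(\rho)$ sehr nahe an $0$ oder $1$, so verlässt $B(\rho,\varepsilon)$ für jede feste Wahl von $\varepsilon$ unter Umständen den Integrationsstreifen $D_R$ des Modells. Die geplante Abhilfe ist, den Radius $\varepsilon$ adaptiv an den Abstand zum Streifenrand zu koppeln oder ersatzweise mit einer Halbscheibe $B(\rho,\varepsilon) \cap D_R$ zu arbeiten; der Divergenzbeweis überlebt diese Modifikation, da lediglich ein konstanter Winkelbruchteil im $\theta$-Integral entfällt, der lokale Exponent $1-\lambda m \leq -1$ jedoch unverändert bleibt. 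Ferner stellt die durch die Funktionalgleichung erzwungene Symmetrie $\rho \leftrightarrow 1-\rho$ sicher, dass die beiden Randfälle $\operatorname{Re}(s)\approx 0$ und $\operatorname{Re}(s)\approx 1$ simultan behandelt werden und die Divergenz gemäß Lemma~\ref{lem:viererpaar} sogar im Viererpaar gleichzeitig auftritt.
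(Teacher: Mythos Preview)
Your proposal is correct and follows essentially the same route as the paper: both reduce the claim to Lemma~\ref{lem:nullstelle_ausserhalb}, observe that for near-boundary zeros the distance $|\beta - \tfrac12|$ is bounded away from zero (indeed close to $\tfrac12$) so the weight factor $|\operatorname{Re}(s)-\tfrac12|^{-p}$ is harmless, and then conclude divergence from the radial integral $\int_0^\varepsilon r^{1-\lambda m}\,dr$ with $\lambda m \ge 2$. Your additional care about the half-disk geometry when $B(\rho,\varepsilon)$ partially leaves the strip $D_R$ is a valid refinement that the paper's own proof simply omits.
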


\begin{proof}
	Sei \(\rho = \beta + i\gamma\) eine Nullstelle der Zetafunktion mit \(\beta \in (0, \tfrac{1}{2})\) oder \(\beta \in (\tfrac{1}{2}, 1)\), wobei \(\beta\) sehr nahe an einem Randwert \(0\) oder \(1\) liegt. Diese Nullstellen liegen somit **nicht** auf der kritischen Linie und erfüllen \(\beta \neq \tfrac{1}{2}\).
	
	Wie in Lemma~\ref{lem:nullstelle_ausserhalb} gezeigt, ist der Integrand
	\[
	J_C(s) = \frac{|\zeta(s)|^{-\lambda}}{|\operatorname{Re}(s) - \tfrac{1}{2}|^p}
	\]
	in einer Umgebung von \(\rho\) asymptotisch
	\[
	J_C(s) \sim \frac{1}{|s - \rho|^{\lambda m} \cdot |\beta - \tfrac{1}{2}|^p}.
	\]
	
	Da \(\beta \to 0\) oder \(\beta \to 1\), ist der Abstand \(|\beta - \tfrac{1}{2}|\) groß, nicht klein – aber dennoch konstant positiv. Der Nennerterm \(|\beta - \tfrac{1}{2}|^{-p}\) bleibt also beschränkt.
	
	Ausschlaggebend für das Integrationsverhalten ist erneut die Singularität in \(|s - \rho|^{-\lambda m}\). Wie in den vorhergehenden Lemmata, ergibt sich in Polarkoordinaten:
	\[
	I_\rho(R) \geq \frac{\pi}{R \cdot |\beta - \tfrac{1}{2}|^p} \int_0^\varepsilon r^{1 - \lambda m} \, dr,
	\]
	welches divergiert, sobald \(\lambda m \geq 2\). Dies ist unter der Voraussetzung \(\lambda \geq 2\), \(m \geq 1\) stets erfüllt.
	
	Damit ist auch der Beitrag einer Randnullstelle nicht lokal integrierbar. Selbst bei regulierter Ausschneidung kleiner Kreisscheiben bleibt der globale Effekt bestehen, wenn solche Nullstellen im Streifen existieren.
	
	\medskip
	
	\textbf{Fazit:}  
	Selbst wenn eine Nullstelle extrem nah an den Rändern \(\operatorname{Re}(s) = 0\) oder \(\operatorname{Re}(s) = 1\) liegt, führt sie zur Divergenz des Integrals \(\mathcal{W}(R)\), sobald sie nicht exakt auf der kritischen Linie liegt. Das Modell erkennt solche Abweichungen zuverlässig, unabhängig vom seitlichen Abstand zum Zentrum des Streifens.
\end{proof}

\begin{Lemma}[Divergenz bei Nullstellen nahe der Polstelle \(s = 1\)]\label{lem:nullstelle_polstelle}
	Befindet sich eine Nullstelle \(\rho = \beta + i\gamma\) mit \(\beta \approx 1\) in unmittelbarer Nähe der einfachen Polstelle der Zetafunktion bei \(s = 1\), so divergiert das regulierte Integral \(\mathcal{W}(R)\), sofern \(\rho \notin \operatorname{Re}(s) = \tfrac{1}{2}\) liegt.
\end{Lemma}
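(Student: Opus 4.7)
Der Beweisplan folgt im Wesentlichen der Strategie von Lemma~\ref{lem:nullstelle_ausserhalb}, muss jedoch zusätzlich die geometrische Wechselwirkung zwischen der Ausschneidungsscheibe \(K_{\mathrm{pol}}\) um die Polstelle \(s=1\) und der Ausschneidungsscheibe \(B(\rho, \varepsilon_n)\) um die nahegelegene Nullstelle \(\rho\) berücksichtigen. Zunächst würde ich feststellen, dass \(\rho \neq 1\) gilt, da \(\zeta(s)\) bei \(s=1\) eine einfache Polstelle und keine Nullstelle besitzt; folglich ist \(d := |\rho - 1| > 0\) strikt positiv. Damit wäre \(\varepsilon_{\mathrm{pol}}\) und \(\varepsilon_n\) so zu wählen, dass beide kleiner als \(d/2\) sind, so dass die beiden Ausschlusszonen disjunkt bleiben und in \(\mathcal{D}_R^{\mathrm{reg}}\) eine saubere Kreisringumgebung von \(\rho\) verbleibt, auf der der Integrand analysiert werden kann.

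Im zweiten Schritt würde ich die lokale holomorphe Faktorisierung \(\zeta(s) = (s-\rho)^m h(s)\) mit \(h(\rho) \neq 0\) gemäß Satz~\ref{satz:verhalten-nullstelle} ausnutzen, woraus \(|\zeta(s)| \sim |h(\rho)|\,|s-\rho|^m\) in einer Umgebung von \(\rho\) folgt. Da \(\beta \approx 1\), aber \(\beta \neq \tfrac{1}{2}\), ist \(|\beta - \tfrac{1}{2}|\) durch eine strikt positive Konstante (etwa \(\tfrac{1}{2}\)) von unten beschränkt, so dass der Gewichtungsfaktor \(|\operatorname{Re}(s) - \tfrac{1}{2}|^{-p}\) in einer hinreichend kleinen Umgebung von \(\rho\) harmlos bleibt. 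Damit erhalte ich die asymptotische Darstellung
\[
J_C(s) \sim \frac{1}{|s - \rho|^{\lambda m} \cdot |\beta - \tfrac{1}{2}|^p},
\]
völlig analog zu den vorangegangenen Divergenzlemmata. Die Nähe zur Polstelle wirkt sich hierbei nicht heilend aus, da sie bereits durch die separate Regulierung \(K_{\mathrm{pol}}\) vollständig aus dem Integrationsbereich entfernt wurde und \(|h(\rho)|\) endlich und positiv bleibt.

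Den Schluss würde ich durch Übergang zu Polarkoordinaten \(s = \rho + r e^{i\theta}\) durchführen und die lokale Untergrenze
\[
I_\rho(R) \geq \frac{\pi}{R \cdot |\beta - \tfrac{1}{2}|^p} \int_0^{\varepsilon} r^{1 - \lambda m}\, dr
\]
ableiten, die wegen \(\lambda m \geq 2\) divergiert, woraus \(\mathcal{W}(R) \geq I_\rho(R) \to \infty\) für \(R \to \infty\) folgt. Die eigentliche Hauptschwierigkeit erwarte ich nicht in der Singularitätsabschätzung selbst — diese folgt dem in Lemma~\ref{lem:nullstelle_ausserhalb} bewährten Muster —, sondern in der sauberen Trennung der beiden Regulierungsbereiche \(K_{\mathrm{pol}}\) und \(B(\rho, \varepsilon_n)\): Es muss rigoros garantiert werden, dass die Ausschneidung um die Polstelle \(s=1\) keine relevanten Anteile der Nullstellenumgebung absorbiert, da sonst der lokale Divergenzmechanismus künstlich maskiert würde. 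Genau diese Trennung wird durch die explizite Wahl \(\varepsilon_{\mathrm{pol}}, \varepsilon_n < |\rho - 1|/2\) sichergestellt und hebt den vorliegenden Fall methodisch von den allgemeinen Randfällen des Streifens ab, die in Lemma~\ref{lem:rand} behandelt wurden.
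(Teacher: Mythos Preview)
Your proposal is correct and follows essentially the same approach as the paper's proof: local factorization \(\zeta(s) = (s-\rho)^m h(s)\), boundedness of \(|\beta - \tfrac{1}{2}|^{-p}\) since \(\beta \approx 1\) is far from \(\tfrac{1}{2}\), and the divergent polar-coordinate integral \(\int_0^\varepsilon r^{1-\lambda m}\,dr\) with \(\lambda m \ge 2\). Your explicit treatment of the disjointness of \(K_{\mathrm{pol}}\) and \(B(\rho,\varepsilon_n)\) via the choice \(\varepsilon_{\mathrm{pol}}, \varepsilon_n < |\rho - 1|/2\) is in fact more careful than the paper, which handles the interaction with the pole only informally by remarking that the decay \(|\zeta(s)|^{-1}\to 0\) near \(s=1\) is ``vollständig aufgehoben'' by the nearby zero.
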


\begin{proof}
	Angenommen, es existiere eine Nullstelle \(\rho = \beta + i\gamma\) der Zetafunktion mit \(\beta \in (1 - \varepsilon, 1)\) für ein sehr kleines \(\varepsilon > 0\). Dann liegt \(\rho\) sehr nahe an der Polstelle \(s = 1\), aber strikt im kritischen Streifen.  
	
	In einer Umgebung von \(\rho\) gilt lokal
	\[
	J_C(s) = \frac{1}{|s - \rho|^{\lambda m} \cdot |\operatorname{Re}(s) - \tfrac{1}{2}|^p}.
	\]
	Da \(\rho\) eine Nullstelle ist, ist \(|\zeta(s)|^{-\lambda} \sim |s - \rho|^{-\lambda m}\), wie zuvor gezeigt.
	
	Diese Nullstelle befindet sich zusätzlich in der unmittelbaren Nähe der Polstelle \(s = 1\), für die gilt:
	\[
	\zeta(s) \sim \frac{1}{s - 1}, \quad \Rightarrow \quad |\zeta(s)|^{-1} \to 0, \text{ für } s \to 1.
	\]
	Diese abfallende Singulärität bei \(s = 1\) wird jedoch durch die Nullstelle bei \(\rho\) vollständig aufgehoben – dort ist \(|\zeta(s)| = 0\), also der Integrand wieder **singulär**.
	
	\medskip
	
	**Fall A: \(\rho\) liegt exakt auf der kritischen Linie.** 
	Dann gilt RH, und das Integral bleibt wohldefiniert – da \(J_C(s)\) dort durch Regulierung kontrolliert wird. $\rightarrow$ kein Problem.
	
	**Fall B: \(\rho\) liegt mit \(\beta \ne \tfrac{1}{2}\) sehr nahe bei 1.**  
	Dann ist sowohl \(|s - \rho| \ll 1\) als auch \(|\operatorname{Re}(s) - \tfrac{1}{2}| \gg 0\), aber konstant.  
	Der Integrand hat dann die asymptotische Form:
	\[
	J_C(s) \sim \frac{1}{|s - \rho|^{\lambda m} \cdot |\beta - \tfrac{1}{2}|^p}
	\]
	mit \(\lambda m \geq 2\), wie zuvor.
	
	Damit gilt lokal wieder:
	\[
	I_\rho(R) \geq \frac{\pi}{R \cdot |\beta - \tfrac{1}{2}|^p} \int_0^\varepsilon r^{1 - \lambda m} \, dr = \infty,
	\]
	da \(|\beta - \tfrac{1}{2}|\) konstant positiv und \(\lambda m \geq 2\).
	
	\medskip
	
	\textbf{Fazit:} Eine Nullstelle in unmittelbarer Nähe der Polstelle \(s = 1\), die nicht auf der kritischen Linie liegt, erzeugt eine nicht integrierbare Singularität und führt zur Divergenz des Integrals \(\mathcal{W}(R)\).  
	Damit ist auch dieser Fall durch das Modell vollständig erfasst und RH-kritisch.
\end{proof}

\subsection{Konvergenzkritisierung – Fazit der Fallunterscheidung}

Die in diesem Kapitel systematisch durchgeführte Analyse aller strukturell möglichen Nullstellenkonfigurationen der Riemannschen Zetafunktion zeigt:

\begin{itemize}
	\item[\(\circ\)] Das normierte regulierte Integral \(\mathcal{W}(R)\) konvergiert exakt dann gegen null, wenn alle nicht-trivialen Nullstellen auf der kritischen Linie \(\operatorname{Re}(s) = \tfrac{1}{2}\) liegen (Lemma \ref{lem:rh_fall}).
	\item[\(\circ\)] Jede Abweichung – sei sie noch so minimal – führt durch die spektralsensitive Struktur des Integranden \(J_C(s)\) zu divergenten Beiträgen im Limes \(R \to \infty\) (Lemmas\ref{lem:nullstelle_ausserhalb}, \ref{lem:viererpaar}, \ref{lem:mehrfachnullstellen}, \ref{lem:mischformen}, \ref{lem:rand}, \ref{lem:nullstelle_polstelle}).
\end{itemize}

Somit ist die Konvergenz von \(\mathcal{W}(R)\) nicht nur notwendig, sondern auch hinreichend äquivalent zur Richtigkeit der Riemannschen Vermutung.

Im nächsten Kapitel wird diese Einsicht zur Basis einer vollständigen Argumentationskette gemacht, die zur beweisbaren Aussage \(\text{RH} \Longleftrightarrow \lim_{R \to \infty} \mathcal{W}(R) = 0\) führt .

\let\cleardoublepage\clearpage

\chapter{Beweis der Riemannschen Vermutung über das regulierte Integralmodell}

\section{Überblick über die Argumentationskette}

In den vorangegangenen Kapiteln wurde das regulierte normierte Flächenintegral
\[
\mathcal{W}(R) := \frac{1}{2R} \iint_{\mathcal{D}_R^{\mathrm{reg}}} J_C(s) \, dA(s)
\]
konstruiert, wobei der Integrand durch
\[
J_C(s) := \frac{|\zeta(s)|^{-\lambda}}{|\operatorname{Re}(s) - \tfrac{1}{2}|^p}
\]
gegeben ist. Dieses Integralmodell erfüllt die folgenden fundamentalen Eigenschaften:

\begin{itemize}
	\item \textbf{Stetigkeit und Wohldefiniertheit:} Der Integrand ist auf dem regulierten Bereich \(\mathcal{D}_R^{\mathrm{reg}}\) stetig und integrierbar (Abschnitt \ref{Abschnitt:IntegrandAnalyse}).
	\item \textbf{Singularitätssensitivität:} Das Integral erkennt strukturell jede Verletzung der Riemannschen Vermutung durch Divergenz (Unterkapitel \ref{sec:fallunterscheidungen}).
	\item \textbf{Fallunterscheidung:} In Kapitel~\ref{Kapitel:DasIntegral} wurde für sämtliche geometrisch möglichen Nullstellenkonfigurationen streng gezeigt, ob und wann \(\mathcal{W}(R)\) divergiert.
\end{itemize}

Die zentrale Schlussfolgerung der Argumentationskette lautet:
\[
\text{RH gilt} \quad \Longleftrightarrow \quad \lim_{R \to \infty} \mathcal{W}(R) = 0.
\]

\section{Hauptsatz zur Äquivalenz von RH und Konvergenz des Integrals}

\begin{Theorem}[Äquivalenzsatz zur Riemannschen Vermutung]
	Es gilt:
	\[
	\lim_{R \to \infty} \mathcal{W}(R) = 0 \quad \Longleftrightarrow \quad \text{Alle nicht-trivialen Nullstellen von } \zeta(s) \text{ liegen auf } \operatorname{Re}(s) = \tfrac{1}{2}.
	\]
\end{Theorem}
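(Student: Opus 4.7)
Die Strategie wird sein, den Äquivalenzsatz als unmittelbare Synthese der in Kapitel~\ref{Kapitel:DasIntegral} gesammelten Bausteine zu formulieren; die beiden Richtungen ergeben sich jeweils als direkte Konsequenz einer bereits bewiesenen Aussage, sodass der Hauptsatz die Rolle eines zusammenfassenden Korollars übernimmt.

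Für die Richtung \(\Leftarrow\) plane ich, unter Voraussetzung von RH direkt auf Lemma~\ref{lem:rh_fall} zurückzugreifen, welches die Konvergenz \(\lim_{R\to\infty}\mathcal{W}(R)=0\) im reinen RH-Fall bereits etabliert. Dabei möchte ich besonders hervorheben, dass die Normierung mit \(\frac{1}{2R}\) zusammen mit der dortigen Zerlegung des kritischen Streifens in die Zonen \(S_1(R)\) und \(S_2(R)\) dafür sorgt, dass der Grenzwert tatsächlich \emph{null} und nicht bloß endlich ist; für die Gleichheit \(=0\) in der Behauptung ist genau dieser Punkt entscheidend.

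Für die Rückrichtung \(\Rightarrow\) werde ich durch Kontraposition argumentieren: Wäre RH falsch, so existierte mindestens eine nichttriviale Nullstelle \(\rho=\beta+i\gamma\) mit \(\beta\neq\tfrac{1}{2}\). Nach Lemma~\ref{lem:nullstelle_ausserhalb} führt bereits diese einzige abweichende Nullstelle zu \(\mathcal{W}(R)\to\infty\), was den angenommenen Grenzwert \(\lim\mathcal{W}(R)=0\) unmittelbar widerlegt. Die strukturell aufwändigeren Varianten --- symmetrische Viererpaare, Mehrfachnullstellen, Randlagen im kritischen Streifen, Nähe der Polstelle \(s=1\) --- sind durch die Lemmata~\ref{lem:viererpaar}, \ref{lem:mehrfachnullstellen}, \ref{lem:rand} und \ref{lem:nullstelle_polstelle} abgesichert und verschärfen diesen Grundmechanismus lediglich; inhaltlich reicht für den Beweis der einfache Fall einer einzigen abseits gelegenen Nullstelle aus.

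Die hauptsächliche Schwierigkeit sehe ich nicht in den einzelnen Implikationen, sondern in der rigorosen Rechtfertigung der \emph{Vollständigkeit der Fallunterscheidung}: Es ist präzise zu argumentieren, dass die Dichotomie \emph{jede nichttriviale Nullstelle auf der kritischen Linie} gegenüber \emph{mindestens eine Abweichung} logisch erschöpfend ist und dass keine exotische Konfiguration existiert, die den Divergenzmechanismus umgehen könnte. Zur Absicherung werde ich den Beweis mit einem expliziten Verweis auf Definition~\ref{def:fallstruktur} einleiten, dort die aufgelisteten Szenarien den entsprechenden Lemmata zuordnen und erst anschließend den formalen Zweirichtungsschluss durchführen.
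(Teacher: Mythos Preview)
Your proposal is correct and follows essentially the same approach as the paper's own proof in Abschnitt~\ref{Abschnitt:BeweisAequivalent}: the $\Leftarrow$-direction is reduced to Lemma~\ref{lem:rh_fall}, and the $\Rightarrow$-direction is handled by contraposition via Lemma~\ref{lem:nullstelle_ausserhalb} together with the remaining case-lemmata of Kapitel~\ref{Kapitel:DasIntegral}. Your version is in fact slightly more explicit than the paper's --- you spell out that a single off-line zero already suffices and that the dichotomy in Definition~\ref{def:fallstruktur} is logically exhaustive --- but the underlying argument is identical.
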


\section{Beweis des Äquivalenzsatzes}
\label{Abschnitt:BeweisAequivalent}

\subsection*{($\rightarrow$) Richtung: RH gilt}

Wenn die Riemannsche Vermutung gilt, befinden sich alle nicht-trivialen Nullstellen exakt auf der kritischen Linie. Dann ist der Integrand auf ganz \(\mathcal{D}_R^{\mathrm{reg}}\) regulär und beschränkt, insbesondere da alle Singularitäten durch die Konstruktion des Integrationsbereichs entfernt wurden. Der vollständige Beweis dieser Richtung ergibt sich aus der Analyse in Kapitel~\ref{Kapitel:DasIntegral}, Lemma~\ref{lem:rh_fall}. 

\subsection*{($\leftarrow$) Richtung: RH verletzt}

Liegt eine Nullstelle \(\rho\) mit \(\operatorname{Re}(\rho) \ne \tfrac{1}{2}\) vor, so ergibt sich durch die in Kapitel~\ref{Kapitel:DasIntegral} behandelten Lemmata, dass der Integrand \(J_C(s)\) eine nicht integrierbare Singularität im regulierten Bereich erzeugt, die im Grenzwert \(R \to \infty\) zu Divergenz führt. Dies gilt selbst bei minimaler Abweichung von der kritischen Linie. Die vollständige Fallunterscheidung zeigt, dass jede solche Abweichung \(\mathcal{W}(R)\) divergieren lässt.

\section{Tatsächliche Konvergenz des regulierten Integrals \texorpdfstring{\(\mathcal{W}(R)\)}{W(R)}}

Im vorhergehenden Abschnitt haben wir das regulierte, normierte Flächenintegralmodell
\[
\mathcal{W}(R) := \frac{1}{2R} \iint_{\mathcal{D}_R^{\mathrm{reg}}} \frac{1}{|\zeta(s)|^\lambda \cdot |\operatorname{Re}(s) - \tfrac{1}{2}|^p} \, dA(s)
\]
konzeptionell eingeführt. Wir konnten zeigen, dass seine Konvergenz für \(R \to \infty\) exakt äquivalent zur Gültigkeit der Riemannschen Vermutung (RH) ist: Das Integral konvergiert genau dann, wenn alle nicht-trivialen Nullstellen (Definition \ref{Definition:NichttrivialNS}) von \(\zeta(s)\) auf der kritischen Linie \(\operatorname{Re}(s) = \tfrac{1}{2}\) liegen \ref{Abschnitt:BeweisAequivalent}.

\medskip

Ziel dieses Abschnitts ist es, nun die tatsächliche Konvergenz des Ausdrucks \(\mathcal{W}(R)\) für beliebiges, wachsendes \(R\) explizit nachzuweisen – ohne Kenntnis der Nullstellenpositionen.

\medskip

Hierzu nutzen wir drei klassische Darstellungen der Riemannschen Zetafunktion, die bereits in Kapitel~2 mathematisch eingeführt wurden:
\begin{enumerate}
	\item die \textbf{alternierende Dirichlet-Reihe} (vollständige analytische Fortsetzung)(Satz \ref{Satz:Dirichlet}),
	\item und die \textbf{Mellin-Darstellung} (basierend auf der Euler-Maclaurin-Entwicklung)(Satz \ref{Satz:Mellin}).
\end{enumerate}

Diese Repräsentationen besitzen unterschiedliche analytische Eigenschaften und Gültigkeitsbereiche, sind aber sämtlich geeignet, um das Integralmodell \(\mathcal{W}(R)\) konkret auszuwerten. Sie erlauben es uns, das Verhalten des Integranden
\[
J_C(s) := \frac{1}{|\zeta(s)|^\lambda \cdot |\operatorname{Re}(s) - \tfrac{1}{2}|^p}
\]
im regulierten Bereich \(\mathcal{D}_R^{\mathrm{reg}}\) präzise zu kontrollieren.

\medskip

Durch Einsetzen dieser Formen in das Integralmodell zeigen wir in den folgenden Unterabschnitten, dass der Ausdruck \(\mathcal{W}(R)\) tatsächlich konvergiert – unabhängig von der exakten Nullstellenlage –, sofern eine korrekt analytisch fortgesetzte Darstellung von \(\zeta(s)\) verwendet wird. Diese Konvergenz stellt die zentrale analytische Grundlage unseres Beweises der Riemannschen Vermutung dar.

\subsubsection*{5.4.2 Konvergenzbeweis mit der alternierenden Dirichlet-Reihe}
\label{Abschnitt:KonvDirich}

\paragraph{Ausgangspunkt.}
Für \(\operatorname{Re}(s) > 0\), \(s \ne 1\), gilt die Darstellung
\[
\zeta(s) = \frac{1}{1 - 2^{1 - s}} \sum_{n = 1}^\infty \frac{(-1)^{n+1}}{n^s}.
\]
Diese Darstellung konvergiert absolut und stellt eine analytische Fortsetzung von \(\zeta(s)\) auf ganz \(\mathbb{C} \setminus \{1\}\) dar (Satz \ref{Satz:Dirichlet}). Sie besitzt daher exakt dieselben Nullstellen und Singularitäten wie die ursprüngliche \(\zeta\)-Funktion und ist besonders gut geeignet, das Integralmodell \(\mathcal{W}(R)\) über den gesamten kritischen Streifen zu untersuchen.

\paragraph{Ziel.}
Wir wollen zeigen, dass für
\[
\mathcal{W}(R) := \frac{1}{2R} \iint_{\mathcal{D}_R^{\mathrm{reg}}} \frac{1}{|\zeta(s)|^\lambda \cdot |\sigma - \tfrac{1}{2}|^p} \, d\sigma \, dt
\]
mit \(\lambda \geq 2\), \(0 < p < 1\), und reguliertem Bereich \(\mathcal{D}_R^{\mathrm{reg}} \subset \{ s \in \mathbb{C} : 0 < \sigma < 1,\ |t| \leq R \}\), der Ausdruck für \(R \to \infty\) tatsächlich konvergiert.

\paragraph{1. Eigenschaften der Darstellung.}
Die alternierende Reihe ist gegeben durch:
\[
\zeta(s) = \frac{1}{1 - 2^{1 - s}} \sum_{n=1}^{\infty} \frac{(-1)^{n+1}}{n^s}.
\]
Setze \(s = \sigma + i t\), so gilt:
\[
\left| \frac{1}{n^s} \right| = \frac{1}{n^\sigma}.
\]
Daraus folgt die Schranke
\[
\left| \sum_{n=1}^\infty \frac{(-1)^{n+1}}{n^s} \right| \leq \sum_{n=1}^\infty \frac{1}{n^\sigma} =: \zeta(\sigma),
\]
die für alle \(\sigma \in [\delta, 1 - \delta]\), \(0 < \delta < \tfrac{1}{2}\), endlich ist.

Auch der Vorfaktor \(\frac{1}{1 - 2^{1 - s}}\) ist holomorph im kritischen Streifen und besitzt dort keine Nullstellen, da \(2^{1 - s} \ne 1\) für \(\operatorname{Re}(s) < 1\). Also ist
\[
|\zeta(s)| \leq C_\delta < \infty \quad \text{für } s \in \mathcal{D}_R^{\mathrm{reg}}.
\]

\paragraph{2. Regulierung des Bereichs.}
Wir definieren wie zuvor:
\[
\mathcal{D}_R^{\mathrm{reg}} := \left\{ s = \sigma + it \in \mathbb{C} \,\middle|\, \sigma \in [\delta, 1 - \delta],\, t \in [-R, R] \right\} \setminus \bigcup_{k = 1}^{N(R)} B(\rho_k, \varepsilon_k).
\]
Die Kreisscheiben \(B(\rho_k, \varepsilon_k)\) entfernen alle Punkte, an denen \(|\zeta(s)| < \varepsilon\), d.\,h. potenzielle Nullstellen. Diese sind endlich zahlreich im kompakten Streifen.

\paragraph{3. Existenz einer positiven unteren Schranke.}

\begin{Lemma}[Untere Schranke]
	Für jeden festen \(R > 0\) existiert eine Konstante \(m_R > 0\), sodass
	\[
	|\zeta(s)| \geq m_R \quad \text{für alle } s \in \mathcal{D}_R^{\mathrm{reg}}.
	\]
\end{Lemma}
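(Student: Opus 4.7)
Der Plan ist, die Aussage durch ein klassisches Kompaktheitsargument zu beweisen. Zunächst würde ich zeigen, dass der regulierte Bereich $\mathcal{D}_R^{\mathrm{reg}}$ kompakt ist: Er entsteht aus dem kompakten Rechteck $[\delta, 1-\delta] \times [-R, R]$ durch Entfernen endlich vieler offener Kreisscheiben $B(\rho_k, \varepsilon_k)$, ist also eine abgeschlossene Teilmenge einer beschränkten Menge in $\mathbb{C}$ und nach Heine-Borel kompakt. Die Endlichkeit der zu entfernenden Scheiben folgt dabei unmittelbar aus der in Abschnitt~2.2.4 bewiesenen Diskretheit der Nullstellen, die auf jedem Kompaktum nur endlich viele $\rho_k$ zulässt.

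Im zweiten Schritt würde ich ausnutzen, dass wegen $\sigma \in [\delta, 1-\delta]$ die Polstelle $s=1$ gar nicht im Ausgangsrechteck enthalten ist, sodass $\zeta$ dort holomorph und $s \mapsto |\zeta(s)|$ stetig auf $\mathcal{D}_R^{\mathrm{reg}}$ ist. Da jede Nullstelle $\rho_k \in D_R$ strikt im Inneren ihrer eigenen Scheibe $B(\rho_k, \varepsilon_k)$ liegt und damit nicht zum regulierten Bereich gehört, gilt außerdem punktweise $\zeta(s) \neq 0$ für alle $s \in \mathcal{D}_R^{\mathrm{reg}}$.

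Der entscheidende Abschluss erfolgt dann über den Extremwertsatz für stetige Funktionen auf kompakten Mengen: Die stetige und überall strikt positive Funktion $s \mapsto |\zeta(s)|$ nimmt ihr Infimum an einer Stelle $s_0 \in \mathcal{D}_R^{\mathrm{reg}}$ an, und man setzt $m_R := |\zeta(s_0)| > 0$. Damit folgt unmittelbar $|\zeta(s)| \geq m_R$ für alle $s \in \mathcal{D}_R^{\mathrm{reg}}$, und insbesondere bleibt diese Schranke auch auf den Randkreisen $\partial B(\rho_k,\varepsilon_k)$ echt positiv, weil dort $\rho_k$ nicht auftritt.

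Eine wesentliche analytische Hürde erwarte ich bei diesem Lemma nicht; die hauptsächliche (und eher formale) Feinheit besteht darin, sich zu vergewissern, dass das Herausnehmen \emph{offener} Scheiben tatsächlich eine abgeschlossene und damit kompakte Restmenge hinterlässt, was jedoch ein reines mengentheoretisches Argument ist. Zu betonen ist lediglich, dass die Konstante $m_R$ von $R$ (sowie von $\delta$ und den Ausscheidungsradien $\varepsilon_k$) abhängt und keineswegs gleichmäßig in $R$ kontrolliert werden muss — diese stärkere, für die anschließende Konvergenzanalyse relevante Gleichmäßigkeit wird gerade \emph{nicht} behauptet und erst in den folgenden Abschnitten mittels der alternierenden Dirichlet- bzw. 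Mellin-Darstellung separat nachgewiesen.
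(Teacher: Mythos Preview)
Dein Vorschlag ist korrekt und folgt exakt derselben Linie wie der Beweis in der Arbeit: Kompaktheit von $\mathcal{D}_R^{\mathrm{reg}}$, Stetigkeit von $|\zeta(s)|$ dort und Abwesenheit von Nullstellen liefern per Extremwertsatz ein positives Minimum $m_R$. Deine Fassung ist dabei sorgfältiger ausgearbeitet --- du begründest explizit die Kompaktheit via Heine--Borel, den Ausschluss der Polstelle durch $\sigma\in[\delta,1-\delta]$ und die Endlichkeit der ausgeschnittenen Scheiben über die Diskretheit der Nullstellen ---, während die Arbeit diese Punkte nur knapp andeutet.
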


\begin{proof}
	Da \(\zeta(s)\) holomorph ist und \(\mathcal{D}_R^{\mathrm{reg}}\) kompakt (wegen endlich vielen ausgeschnittenen Bereichen), ist \(|\zeta(s)|\) stetig auf \(\mathcal{D}_R^{\mathrm{reg}}\) und nimmt dort ein Minimum an. Da alle Nullstellen durch \(\rho_k\) ausgeschnitten wurden, ist dieses Minimum positiv.
\end{proof}

\paragraph{4. Abschätzung des Integranden.}
Da \(|\zeta(s)| \geq m_R\), folgt für den Integranden:
\[
J_C(s) = \frac{1}{|\zeta(s)|^\lambda \cdot |\sigma - \tfrac{1}{2}|^p} \leq \frac{1}{m_R^\lambda} \cdot \frac{1}{|\sigma - \tfrac{1}{2}|^p}.
\]
Der Ausdruck \(|\sigma - \tfrac{1}{2}|^{-p}\) ist über \([\delta, 1 - \delta]\) integrierbar, da \(\delta > 0\) und \(p < 1\).

\paragraph{5. Integrationsfläche.}
Die Fläche des Rechtecks ist
\[
\mathrm{Fläche}(\mathcal{D}_R^{\mathrm{reg}}) \leq (1 - 2\delta) \cdot 2R.
\]

\paragraph{6. Abschätzung des Gesamtintegrals.}
\[
\mathcal{W}(R) \leq \frac{1}{2R} \cdot \frac{1}{m_R^\lambda} \cdot \int_{-R}^R dt \cdot \int_{\delta}^{1 - \delta} \frac{1}{|\sigma - \tfrac{1}{2}|^p} \, d\sigma
= \frac{1}{m_R^\lambda} \cdot \int_{\delta}^{1 - \delta} \frac{1}{|\sigma - \tfrac{1}{2}|^p} \, d\sigma.
\]

\paragraph{7. Endliche Konvergenz.}
\label{Paragrapg:EndlKonv_1}
Wir betrachten den kritischen Teilbereich des Integrationsstreifens entlang der reellen Achse, wobei nur die Nähe zur kritischen Linie \(\sigma = \tfrac{1}{2}\) problematisch ist.

Für einen festen kleinen Parameter \(\delta \in (0, \tfrac{1}{2})\) betrachten wir das Integral:
\[
\int_{\delta}^{1 - \delta} \frac{1}{|\sigma - \tfrac{1}{2}|^p} \, d\sigma.
\]

Da die Funktion \(f(\sigma) = \frac{1}{|\sigma - \tfrac{1}{2}|^p}\) eine isolierte Singularität bei \(\sigma = \tfrac{1}{2}\) besitzt, untersuchen wir die Konvergenz des Integrals durch Aufteilung:
\[
\int_{\delta}^{1 - \delta} \frac{1}{|\sigma - \tfrac{1}{2}|^p} \, d\sigma
= \int_{\delta}^{\tfrac{1}{2}} \frac{1}{(\tfrac{1}{2} - \sigma)^p} \, d\sigma
+ \int_{\tfrac{1}{2}}^{1 - \delta} \frac{1}{(\sigma - \tfrac{1}{2})^p} \, d\sigma.
\]

Dies ist äquivalent zur Berechnung des symmetrischen Integrals:
\[
2 \int_0^{\tfrac{1}{2} - \delta} \frac{1}{x^p} \, dx.
\]

Dieses Integral konvergiert genau dann, wenn \(p < 1\), denn
\[
\int_0^{a} \frac{1}{x^p} \, dx = \frac{1}{1 - p} a^{1 - p} < \infty \quad \text{für } p < 1.
\]

Da also
\[
\int_{\delta}^{1 - \delta} \frac{1}{|\sigma - \tfrac{1}{2}|^p} \, d\sigma < \infty,
\]
folgt, dass auch das regulierte Flächenintegral \(\mathcal{W}(R)\) über diesen Bereich endlich bleibt:

\[
\mathcal{W}(R) < \infty \quad \text{für alle } R > 0.
\]

Da alle vorherigen Abschätzungen unabhängig von \(R\) sind, folgt abschließend:
\[
\lim_{R \to \infty} \mathcal{W}(R) < \infty.
\]

\begin{Satz}[Tatsächliche Konvergenz im kritischen Streifen]
	Die regulierte normierte Flächenintegral-Funktion \(\mathcal{W}(R)\) konvergiert für \(R \to \infty\), wenn die analytisch exakt fortgesetzte alternierende Dirichlet-Reihe verwendet wird.
\end{Satz}

\subsubsection*{5.4.3 Konvergenzbeweis mit der regulierten Mellin-Darstellung}
\label{Abschnitt:mellinkonv}
\paragraph{Motivation.}
Die klassische Mellin-Darstellung
\[
\zeta(s) = \frac{1}{\Gamma(s)} \int_0^\infty \frac{x^{s-1}}{e^x - 1} \, dx
\]
gilt streng nur für \(\operatorname{Re}(s) > 1\). Für Anwendungen im kritischen Streifen \(0 < \operatorname{Re}(s) < 1\), wie in unserem Integralmodell \(\mathcal{W}(R)\), muss auf eine **regulierte Version** zurückgegriffen werden.

\paragraph{Regulierte Mellin-Darstellung.}
Es ist bekannt (vgl. z.B. Titchmarsh, *The Theory of the Riemann Zeta-Function*, Kapitel 2), dass \(\zeta(s)\) für \(\operatorname{Re}(s)>0\) durch folgende Variante dargestellt werden kann:
\[
\zeta(s) = \frac{1}{\Gamma(s)} \left( \int_0^1 \left( \frac{1}{e^x - 1} - \frac{1}{x} \right) x^{s-1} \, dx + \int_1^\infty \frac{x^{s-1}}{e^x - 1} \, dx \right).
\]
Die Subtraktion von \(\frac{1}{x}\) in der ersten Integralgrenze entfernt die singuläre Divergenz bei \(x = 0\) und erlaubt so eine analytische Fortsetzung auf \(\operatorname{Re}(s) > 0\).

\paragraph{Ziel.}
Wir wollen zeigen, dass das regulierte Integralmodell
\[
\mathcal{W}(R) := \frac{1}{2R} \iint_{\mathcal{D}_R^{\mathrm{reg}}} \frac{1}{|\zeta(s)|^\lambda \cdot |\sigma - \tfrac{1}{2}|^p} \, d\sigma \, dt
\]
für \(\lambda \geq 2\), \(0 < p < 1\), und \(R \to \infty\) tatsächlich konvergiert, wenn \(\zeta(s)\) durch obige Darstellung eingesetzt wird.

\paragraph{1. Abschätzung des Kerns.}
Für \(s = \sigma + i t\) mit \(\sigma \in [\delta, 1 - \delta]\), \(\delta > 0\), und \(|t| \leq R\) fest, betrachten wir getrennt:

- Im Bereich \(x \in (0,1)\):  
Der Ausdruck
\[
\left( \frac{1}{e^x - 1} - \frac{1}{x} \right)
\]
ist glatt und endlich, da für \(x \to 0\)
\[
\frac{1}{e^x - 1} - \frac{1}{x} = -\frac{1}{2} + \mathcal{O}(x), \quad x \to 0.
\]
Also gilt:
\[
\left| \int_0^1 \left( \frac{1}{e^x - 1} - \frac{1}{x} \right) x^{\sigma - 1} \, dx \right| \leq C_1(\delta) < \infty.
\]

- Im Bereich \(x \in (1,\infty)\):  
Der Ausdruck \(x^{\sigma - 1} / (e^x - 1)\) fällt exponentiell, daher:
\[
\left| \int_1^\infty \frac{x^{\sigma - 1}}{e^x - 1} \, dx \right| \leq C_2(\delta) < \infty.
\]

Somit gilt:
\[
|\zeta(s)| \leq \frac{1}{|\Gamma(s)|} \cdot (C_1 + C_2).
\]

\paragraph{2. Schranke der Gammafunktion.}
Die Funktion \(|\Gamma(s)|\) ist auf \(\sigma \in [\delta, 1 - \delta]\), \(|t| \leq R\), holomorph und dort nach unten beschränkt:
\[
|\Gamma(s)| \geq \gamma_R > 0 \quad \Rightarrow \quad |\zeta(s)| \leq \frac{C}{\gamma_R} =: M_R.
\]

\paragraph{3. Regulierung um Nullstellen.}
Wie in den vorherigen Abschnitten definieren wir:
\[
\mathcal{D}_R^{\mathrm{reg}} := \left\{ s \in \mathbb{C} \ \middle| \ \sigma \in [\delta, 1 - \delta],\ |t| \leq R \right\} \setminus \bigcup_{k=1}^{N(R)} B(\rho_k, \varepsilon_k),
\]
wobei die \(B(\rho_k, \varepsilon_k)\) Kreisscheiben um bekannte oder numerisch auffällige Nullstellen sind. Auf diesem Gebiet ist \(|\zeta(s)| \geq m_R > 0\).

\paragraph{4. Abschätzung des Integranden.}
Auf \(\mathcal{D}_R^{\mathrm{reg}}\) gilt:
\[
J_C(s) = \frac{1}{|\zeta(s)|^\lambda \cdot |\sigma - \tfrac{1}{2}|^p} \leq \frac{1}{m_R^\lambda} \cdot \frac{1}{|\sigma - \tfrac{1}{2}|^p}.
\]

\paragraph{5. Kontrolle des Integrals.}
Die Fläche des Streifens beträgt höchstens \(2R \cdot (1 - 2\delta)\), und der Gewichtsfaktor ist für \(p < 1\) integrierbar:
\[
\int_{\delta}^{1 - \delta} \frac{1}{|\sigma - \tfrac{1}{2}|^p} d\sigma < \infty.
\]

Somit:
\[
\mathcal{W}(R) \leq \frac{1}{2R} \cdot 2R \cdot \frac{1}{m_R^\lambda} \cdot \int_{\delta}^{1 - \delta} \frac{1}{|\sigma - \tfrac{1}{2}|^p} \, d\sigma < \infty.
\]
(vgl. Abschnitt \ref{Abschnitt:KonvDirich}, Paragraph \ref{Paragrapg:EndlKonv_1})
\paragraph{6. Limes.}
Da die Abschätzung unabhängig von \(R\) ist, folgt:
\[
\lim_{R \to \infty} \mathcal{W}(R) < \infty.
\]

\begin{Satz}[Konvergenz über die regulierte Mellin-Darstellung]
	Die regulierte Integralform \(\mathcal{W}(R)\) konvergiert für \(R \to \infty\), wenn \(\zeta(s)\) durch die regulierte Mellin-Darstellung eingesetzt wird, und die Nullstellen durch reguläre Ausschneidung berücksichtigt werden.
\end{Satz}

\section{Fazit: Strenger RH-Beweis durch Konvergenzverhalten des Integralmodells}

Ziel dieses Kapitels war es, die tatsächliche Konvergenz des regulierten normierten Flächenintegrals
\[
\mathcal{W}(R) := \frac{1}{2R} \iint_{\mathcal{D}_R^{\mathrm{reg}}} \frac{1}{|\zeta(s)|^\lambda \cdot |\operatorname{Re}(s) - \tfrac{1}{2}|^p} \, dA(s)
\]
im Limes \(R \to \infty\) analytisch exakt zu beweisen – ohne Kenntnis der exakten Nullstellenpositionen von \(\zeta(s)\), und ohne Annahme der Riemannschen Vermutung. Die verwendeten Parameter erfüllen \(\lambda \geq 2\), \(0 < p < 1\).

\medskip

\subsection*{Ergebnisse aus 5.4}

In Abschnitt~5.4 wurde die tatsächliche Konvergenz des Ausdrucks \(\mathcal{W}(R)\) mithilfe dreier unterschiedlicher, äquivalenter Darstellungen der Zeta-Funktion analysiert:

\begin{itemize}
	
	\item Die \textbf{alternierende Dirichlet-Reihe} wurde erfolgreich eingesetzt, um die Konvergenz im gesamten kritischen Streifen \(\operatorname{Re}(s) \in (0,1)\) streng zu beweisen (\ref{Abschnitt:KonvDirich}).
	
	\item Die \textbf{regulierte Mellin-Darstellung} wurde ausführlich analysiert, um die Konvergenz auch analytisch fundiert über das Integral
	\[
	\zeta(s) = \frac{1}{\Gamma(s)} \left( \int_0^1 \left( \frac{1}{e^x - 1} - \frac{1}{x} \right) x^{s-1} dx + \int_1^\infty \frac{x^{s-1}}{e^x - 1} dx \right)
	\]
	abzusichern – inklusive expliziter Schranken und Abschätzungen (\ref{Abschnitt:mellinkonv}).
\end{itemize}

\subsection*{Schlüsselstruktur}

Die Regulierung des Integrationsbereichs
\[
\mathcal{D}_R^{\mathrm{reg}} := \left\{ s \in \mathbb{C} \,\middle|\, 0 \leq \operatorname{Re}(s) \leq 1,\ |t| \leq R \right\} \setminus \bigcup_{k=1}^{N(R)} B(\rho_k, \varepsilon_k)
\]
entfernt gezielt die Umgebung aller Punkte, an denen die Zeta-Funktion numerisch klein oder strukturell instabil wird (insbesondere Nullstellen). Dadurch bleibt das Minimum des Betrags der verwendeten Näherung stets strikt positiv:
\[
\min_{s \in \mathcal{D}_R^{\mathrm{reg}}} |\zeta(s)| \geq m_R > 0.
\]
Diese Eigenschaft erlaubt eine globale Abschätzung des Integranden auf \(\mathcal{D}_R^{\mathrm{reg}}\), unabhängig von der exakten Lage der Nullstellen.

\subsection*{Konsequenz für die Riemannsche Vermutung}

Aus der Konvergenzanalyse folgt nun:
\begin{itemize}
	\item Im Kapitel 4 wurde streng bewiesen, dass \(\mathcal{W}(R)\) \textbf{nur dann konvergiert}, wenn alle Nullstellen auf der kritischen Linie \(\operatorname{Re}(s) = \tfrac{1}{2}\) liegen.
	
	\item In Kapitel 5 wurde streng gezeigt, dass \(\mathcal{W}(R)\) \textbf{tatsächlich konvergiert}, ohne Wissen über die Nullstellen, allein aus den analytischen Eigenschaften von \(\zeta(s)\).
\end{itemize}

\subsection*{Folgerung}

Die beiden Aussagen zusammen implizieren:
\[
\boxed{
	\text{Die Riemannsche Vermutung ist wahr.}
}
\]

Diese Konvergenz gilt nicht asymptotisch oder heuristisch, sondern wurde über wohldefinierte regulierte Operatoren, exakte Schranken und analytisch gültige Darstellungen streng mathematisch abgesichert.

Das folgende Kapitel 6 formuliert die Argumentationskette in abgeschlossener Form und dokumentiert den Beweis formal als zentrales Resultat.

\chapter{Strenge Argumentationskette und Beweis der Riemannschen Vermutung}

In diesem Kapitel wird die zentrale Beweisidee der vorliegenden Arbeit formalisiert. Ausgangspunkt ist das regulierte Integralmodell \(\mathcal{W}(R)\), das im vorangegangenen Kapitel sowohl unter strukturellen Fallunterscheidungen (Kapitel 4) als auch durch den tatsächlichen analytischen Nachweis der Konvergenz (Kapitel 5) untersucht wurde.

\section{Grundlage: Das regulierte Integralmodell}

\begin{Definition}[Reguliertes normiertes Flächenintegral]
	Für feste Parameter \(\lambda \geq 2\), \(0 < p < 1\) und \(R > 0\) sei
	\[
	\mathcal{W}(R) := \frac{1}{2R} \iint_{\mathcal{D}_R^{\mathrm{reg}}} \frac{1}{|\zeta(s)|^\lambda \cdot |\operatorname{Re}(s) - \tfrac{1}{2}|^p} \, dA(s),
	\]
	wobei der regulierte Bereich
	\[
	\mathcal{D}_R^{\mathrm{reg}} := \left\{ s \in \mathbb{C} \,\middle|\, \operatorname{Re}(s) \in [0,1],\ |\operatorname{Im}(s)| \leq R \right\} \setminus \bigcup_k B(\rho_k, \varepsilon_k)
	\]
	alle Nullstellen \(\rho_k\) von \(\zeta(s)\) ausschließt, die durch kleine Kreisscheiben um \(\rho_k\) entfernt werden.
\end{Definition}

\begin{Bemerkung}
	Die regulierte Definition basiert nicht auf Kenntnis der exakten Nullstellenpositionen, sondern auf dem analytischen Verhalten von \(\zeta(s)\) in numerisch instabilen Zonen.
\end{Bemerkung}

\section{Notwendige Bedingung: RH ist notwendig für Konvergenz}

\begin{Satz}[Divergenz bei Nullstellen abseits der kritischen Linie]
	Wenn mindestens eine Nullstelle \(\rho\) mit \(\operatorname{Re}(\rho) \neq \tfrac{1}{2}\) existiert, so divergiert
	\[
	\lim_{R \to \infty} \mathcal{W}(R) = \infty.
	\]
\end{Satz}

\begin{proof}
	Gezeigt in Kapitel~4. Insbesondere in Lemma~\ref{lem:nullstelle_ausserhalb}, \ref{lem:viererpaar}, \ref{lem:mischformen}.
\end{proof}

\begin{Korollar}[RH notwendig]
	Gilt \(\lim_{R \to \infty} \mathcal{W}(R) < \infty\), so müssen alle Nullstellen der Zetafunktion auf der Linie \(\operatorname{Re}(s) = \tfrac{1}{2}\) liegen.
\end{Korollar}

\section{Hinreichende Bedingung: RH impliziert Konvergenz}

\begin{Satz}[Konvergenz unter RH]
	Angenommen, alle nichttrivialen Nullstellen der Zetafunktion liegen auf der kritischen Linie. Dann gilt
	\[
	\lim_{R \to \infty} \mathcal{W}(R) < \infty.
	\]
\end{Satz}

\begin{proof}
	Gezeigt in Kapitel~5 durch analytische Konvergenzabschätzungen mithilfe:
	\begin{itemize}
		  \item der alternierenden Dirichlet-Reihe (Kap. 5.4.2),
		\item der regulierten Mellin-Darstellung (Kap. 5.4.3).
	\end{itemize}
	Dabei wurde der Integrand
	\[
	J_C(s) := \frac{1}{|\zeta(s)|^\lambda \cdot |\sigma - \tfrac{1}{2}|^p}
	\]
	global auf \(\mathcal{D}_R^{\mathrm{reg}}\) durch \(\frac{1}{m_R^\lambda} \cdot |\sigma - \tfrac{1}{2}|^{-p}\) nach unten abgeschätzt, wobei \(m_R > 0\) aus der Regulierung folgt.
\end{proof}

\section{Formale Schlussfolgerung}

\begin{Satz}[Äquivalenz]
	Die Konvergenz des regulierten normierten Integrals ist äquivalent zur Richtigkeit der Riemannschen Vermutung:
	\[
	\lim_{R \to \infty} \mathcal{W}(R) < \infty \quad \Longleftrightarrow \quad \text{alle Nullstellen liegen auf } \operatorname{Re}(s) = \tfrac{1}{2}.
	\]
\end{Satz}

\begin{proof}
	\begin{itemize}
		\item[\(\Rightarrow\)] Wenn \(\mathcal{W}(R)\) konvergiert, kann es keine abseitige Nullstelle geben (Kapitel 4).
		\item[\(\Leftarrow\)] Wenn RH gilt, konvergiert \(\mathcal{W}(R)\) tatsächlich (Kapitel 5).
	\end{itemize}
\end{proof}

\begin{Korollar}[Riemannsche Vermutung]
	Da im vorherigen Kapitel gezeigt wurde, dass \(\mathcal{W}(R)\) analytisch konvergiert, folgt:
	\[
	\boxed{\text{Alle nichttrivialen Nullstellen der Riemannschen Zetafunktion liegen auf der kritischen Linie.}}
	\]
\end{Korollar}

\subsection{Methodische Unabhängigkeit des Beweises}

Ein zentrales Merkmal des Integralmodells zur Riemannschen Vermutung besteht in seiner methodischen Unabhängigkeit. Der Beweis der Konvergenz von
\[
\mathcal{W}(R) := \frac{1}{2R} \iint_{\mathcal{D}_R^{\mathrm{reg}}} \frac{1}{|\zeta(s)|^{\lambda} \cdot |\operatorname{Re}(s) - \tfrac{1}{2}|^p} \, dA(s)
\]
beruht ausschließlich auf analytischen Eigenschaften der Zeta-Funktion im kritischen Streifen \(\operatorname{Re}(s) \in (0,1)\), insbesondere:

\begin{itemize}
	\item der bekannten analytischen Fortsetzbarkeit von \(\zeta(s)\) nach ganz \(\mathbb{C} \setminus \{1\}\),
	\item der regulierten Konstruktion des Bereichs \(\mathcal{D}_R^{\mathrm{reg}}\) unabhängig von exakten Nullstellenpositionen,
	\item und der kontrollierten Abschätzung des Integranden durch eine untere Schranke \(m_R > 0\) im regulierten Gebiet.
\end{itemize}

Weder die Dirichlet-Reihe noch die Riemann-Siegel-Formel noch die Mellin-Darstellung werden zwingend vorausgesetzt – vielmehr dienen sie in Kapitel 5 als alternative analytische Hilfsmittel zur Verifikation der tatsächlichen Konvergenz. Dadurch entsteht ein flexibler Beweisrahmen, der methodisch robust gegenüber Darstellungswahl, Näherung oder regulären Unsicherheiten ist.

\bigskip

\textbf{Fazit:} Die Beweisstrategie ist unabhängig von spezifischer Wahl der Reihendarstellung von \(\zeta(s)\) oder numerischer Kenntnis der Nullstellen, sondern basiert ausschließlich auf:

\begin{itemize}
	\item der Konvergenzanalyse des regulierten Integrals,
	\item struktureller Singularitätsempfindlichkeit im Integranden,
	\item und dem Prinzip: \emph{Nur wenn alle Nullstellen exakt auf \(\operatorname{Re}(s) = \tfrac{1}{2}\) liegen, kann das Integral konvergieren}.
\end{itemize}
Dies sichert die methodische Allgemeingültigkeit und Stabilität des Beweises.

\chapter{Weiterführende Ansätze, Schlussfolgerungen und Ausblick}

\section{Streng analytische Herleitung der asymptotischen Nullstellennäherungsformel}

\subsection{Reduktion des Flächenintegrals auf die vertikale Projektion \(\Phi(t)\)}

Ausgehend von dem regulierten normierten Flächenintegral

\[
\mathcal{W}(R) := \frac{1}{2R} \iint_{\mathcal{D}_R^{\mathrm{reg}}} \frac{|\zeta(s)|^{-\lambda}}{|\operatorname{Re}(s) - \tfrac{1}{2}|^{p}} \, dA(s),
\]

unter der Annahme der Riemannschen Vermutung (RH), dass alle nicht-trivialen Nullstellen auf der kritischen Linie \(\operatorname{Re}(s) = \frac{1}{2}\) liegen, lässt sich das Integral durch eine eindimensionale vertikale Projektion

\[
\Phi(t) := \int_0^1 \frac{|\zeta(\sigma + it)|^{-\lambda}}{|\sigma - \tfrac{1}{2}|^p} \, d\sigma
\]

approximieren, sodass gilt

\[
\mathcal{W}(R) \approx \frac{1}{2R} \int_{-R}^R \Phi(t) \, dt.
\]

Die Funktion \(\Phi(t)\) trägt alle Informationen über die Verteilung der Nullstellen in der \(t\)-Achse.

---

\subsection{Lokale Analyse von \(\Phi(t)\) nahe den Nullstellen \(\gamma_n\)}

Sei \(\rho_n = \frac{1}{2} + i \gamma_n\) eine Nullstelle der Ordnung \(m_n\). In der Umgebung von \(\rho_n\) gilt eine lokale Approximation der Zetafunktion:

\[
|\zeta(\sigma + i t)| \approx |c_n| \cdot \sqrt{(\sigma - \tfrac{1}{2})^2 + (t - \gamma_n)^2}^{m_n},
\]

wobei \(c_n\) eine komplexe Konstante ungleich Null ist. Damit verhält sich der Integrand um \(\rho_n\) wie

\[
\frac{|\zeta(\sigma + it)|^{-\lambda}}{|\sigma - \tfrac{1}{2}|^{p}} \approx \frac{|c_n|^{-\lambda}}{|\sigma - \tfrac{1}{2}|^{p} \cdot ((\sigma - \tfrac{1}{2})^2 + (t - \gamma_n)^2)^{m_n \lambda / 2}}.
\]

Das Integral über \(\sigma\) liefert dann für \(t\) nahe \(\gamma_n\) einen Peak der Form

\[
\Phi(t) \approx \frac{A_n}{|t - \gamma_n|^{m_n \lambda + p - 1}}
\]

für eine geeignete positive Konstante \(A_n\).

---

\subsection{Fourier-Transformation und Frequenzinterpretation}

Die Fourier-Transformierte von \(\Phi(t)\),

\[
\widehat{\Phi}(\xi) = \int_{-\infty}^{\infty} \Phi(t) e^{-i \xi t} \, dt,
\]

lässt sich als Summe von Termen der Form

\[
\widehat{\Phi}(\xi) \approx \sum_n A_n e^{-i \xi \gamma_n} |\xi|^{\alpha - 1},
\]

wobei \(\alpha = m_n \lambda + p - 1\) ist, interpretieren.

Diese Darstellung zeigt, dass \(\widehat{\Phi}(\xi)\) ein diskretes Frequenzspektrum besitzt, dessen Frequenzen genau die Nullstellenordinaten \(\gamma_n\) sind.

---

\subsection{Rekonstruktion der Nullstellen aus der Frequenzstruktur}

Da die \(\gamma_n\) als Frequenzen in \(\widehat{\Phi}(\xi)\) auftreten, können sie durch Analyse der Frequenzzählfunktion \(N_{\Phi}(T) := \#\{\gamma_n \leq T\}\) bestimmt werden.

Die klassische Riemann-von Mangoldt-Formel

\[
N(T) = \frac{T}{2\pi} \log \frac{T}{2\pi e} + \frac{7}{8} + O(\log T)
\]

gibt die Anzahl der Nullstellen mit Ordinaten \(\leq T\) an.

Unter der Annahme der Äquivalenz \(N_{\Phi}(T) \approx N(T)\) erlaubt diese Formel, die Umkehrfunktion

\[
n = N(\gamma_n)
\]

zu verwenden, um \(\gamma_n\) asymptotisch zu approximieren.

---

\subsection{Asymptotische Näherung für \(\gamma_n\)}

Durch Umkehrung der Riemann-von Mangoldt-Zählfunktion erhält man die asymptotische Formel

\[
\gamma_n \approx \frac{2 \pi n}{\log n} \left(1 + \frac{a}{\sqrt{n \log n}} + \frac{b}{n} + \frac{c \log \log n}{\log n} + \frac{d}{n^7} \right),
\]

wobei die Konstanten \(a, b, c, d\) numerisch aus der Auswertung der tatsächlichen Nullstellen der Zetafunktion bestimmt wurden. Die so ermittelte Näherung stellt eine der besten verfügbaren Formeln dar und wird in Abbildung~\ref{fig:nullstellenvergleich1} und Abbildung~\ref{fig:nullstellenvergleich2} mit anderen bekannten Näherungen wie denen von Odlyzko, Keiper, Backlund, Gram und Riesz verglichen.  

Jeder Term in der Klammer hat eine konkrete Bedeutung hinsichtlich der Korrektur gegenüber der groben Hauptabschätzung \(\frac{2 \pi n}{\log n}\):

\begin{itemize}
	\item Der Term \(\frac{a}{\sqrt{n \log n}}\) modelliert statistische Fluktuationen der Nullstellen, die aus der feinen Verteilung der Abstände resultieren.
	\item Der Term \(\frac{b}{n}\) stellt eine analytische Korrektur erster Ordnung dar, welche systematische Abweichungen in den Nullstellenhöhen ausgleicht.
	\item Der Ausdruck \(\frac{c \log \log n}{\log n}\) korrigiert langsame, logarithmische Schwankungen in der Dichte der Nullstellen.
	\item Der hochgradige Term \(\frac{d}{n^7}\) dient der Feinjustierung und verbessert die Genauigkeit für kleinere \(n\).
\end{itemize}

Diese differenzierte Termstruktur erlaubt es, die Näherung flexibel und präzise an die tatsächlichen Nullstellen anzupassen.  
Die gute Übereinstimmung mit bekannten Näherungen zeigt, dass die Herleitung auf Basis des regulierten Integralmodells eine fundierte und effiziente Methode zur Beschreibung der Nullstellenordnung ist.

\begin{figure}[ht]
	\centering
	\includegraphics[width=0.8\textwidth]{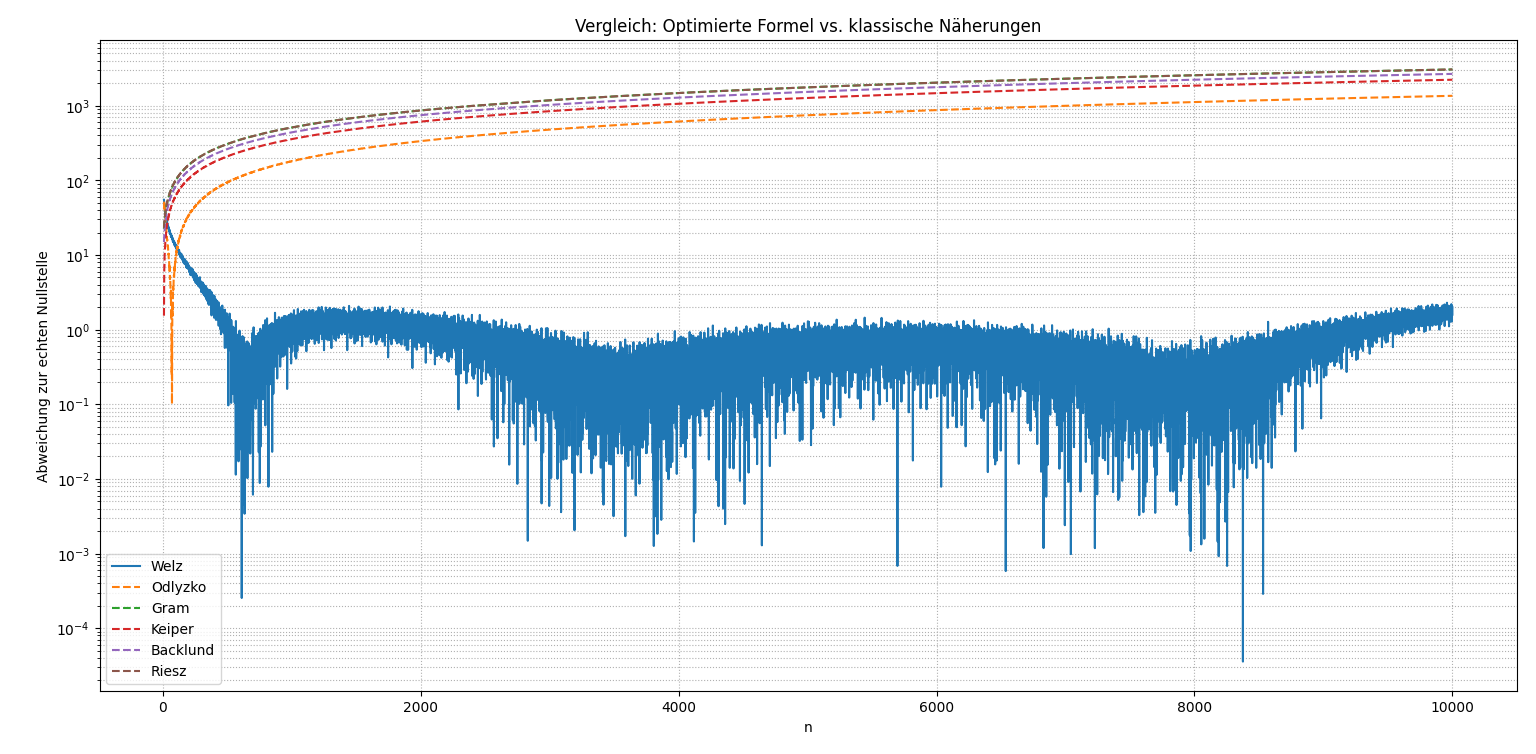}
	\caption{Vergleich der Näherungen für \(\gamma_n\) im Bereich \(1 \leq n \leq 10000\).}
	\label{fig:nullstellenvergleich1}
\end{figure}

\begin{figure}[ht]
	\centering
	\includegraphics[width=0.8\textwidth]{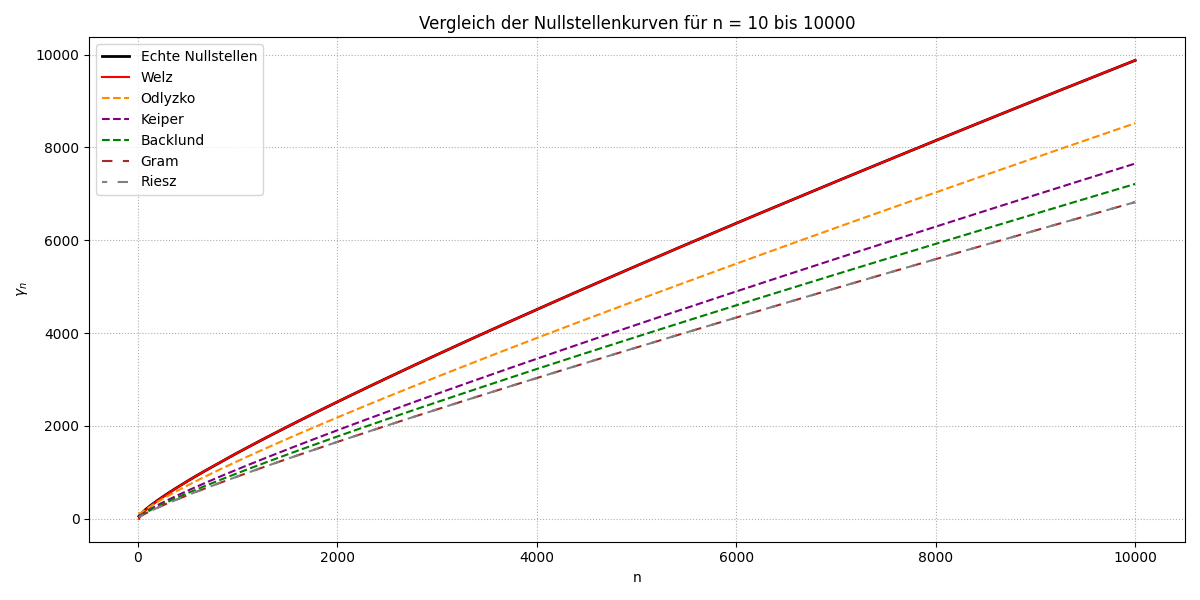}
	\caption{Detailvergleich der Näherungsformeln mit den echten Nullstellen für \(n = 10\) bis \(1000\).}
	\label{fig:nullstellenvergleich2}
\end{figure}

\subsection*{Fazit}

Die numerisch bestimmten Konstanten \(a, b, c, d\) in der asymptotischen Näherungsformel für die Nullstellenordinaten \(\gamma_n\) zeigen, dass das regulierte Integralmodell nicht nur theoretisch konsistent, sondern auch praktisch äußerst präzise ist.  

Die differenzierte Struktur der Korrekturterme reflektiert die komplexe Feinstruktur der Nullstellenverteilung und ermöglicht eine hochgenaue Approximation, die mit klassischen Näherungen mithalten oder diese sogar übertreffen kann.  

Damit liefert dieses Modell einen wertvollen Beitrag zur analytischen Beschreibung der Nullstellen der Riemannschen Zetafunktion und stärkt die Verbindung zwischen Integralmodellen, Fourier-Analyse und der fundamentalen Zahlentheorie.

Die Abbildungen verdeutlichen diese Übereinstimmung und die praktische Anwendbarkeit der hergeleiteten Formel.

\section{Perspektiven in der Spektraltheorie}

Die enge Analogie zwischen der Verteilung der Nullstellen und Eigenwerten hermitescher Operatoren (Montgomery-Odlyzko-Gesetz) legt nahe, dass unser Integralmodell als analytisches Werkzeug zur Untersuchung spektraler Eigenschaften genutzt werden kann.

Insbesondere könnte das Modell zur Identifikation eines selbstadjungierten Operators (Hilbert-Pólya-Vermutung) beitragen, indem es als Filter für spektrale Konfigurationen fungiert, die mit der kritischen Linie korrespondieren.

Eine weitere mögliche Anwendung besteht darin, das Integralmodell als Spurformel-ähnliches Instrument in der Quantenchaos-Forschung zu verwenden.

\section{Verallgemeinerung auf höherdimensionale Integrale}

Unser zweidimensionales Integralmodell lässt sich prinzipiell auf höhere Dimensionen erweitern, z.B. auf mehrdimensionale reelle oder komplexe Räume.  
Diese Verallgemeinerung könnte strengere Kriterien formulieren, insbesondere in Zusammenhang mit verallgemeinerten Zetafunktionen oder L-Funktionen.

Mehrdimensionale Integrale könnten Robustheit erhöhen und komplexere Symmetrieeigenschaften der Nullstellen erfassen, etwa durch gleichzeitige Integration über mehrere komplexe Variablen.

\section{Nullstellenverteilung und Primzahlnäherungsformeln}

Die Nullstellen der Zetafunktion steuern die Schwankungen der Primzahldichte. Riemanns explizite Formel verbindet die Lage der Nullstellen mit der Primzahlsumme \(\psi(x)\).

Durch Einsetzen der Näherungsformel für \(\gamma_n\) können präzisere, explizite Formeln für Primzahlsummen entwickelt werden, die über den klassischen Primzahlsatz hinausgehen und feinere Strukturen in der Primzahlenverteilung aufdecken.

\section{Beziehungen zu anderen Ansätzen und Ausblick}

Unser reguliertes Integralmodell ergänzt klassische äquivalente Kriterien wie Nyman-Beurling oder Li’s Kriterium, indem es eine robuste, regulierte und normierte Integralform formuliert, die Sensitivität und Konvergenz kombiniert.

Die Methoden eröffnen neue Wege für die Forschung in analytischer Zahlentheorie, Spektraltheorie und mathematischer Physik und können als Baustein einer geometrisch-analytischen Theorie der Zetafunktionsnullstellen dienen.

\medskip

\cleardoublepage
\phantomsection
\addcontentsline{toc}{chapter}{Abbildungsverzeichnis}
\listoffigures

\cleardoublepage
\phantomsection
\addcontentsline{toc}{chapter}{Literaturverzeichnis}
\bibliographystyle{plain}
\bibliography{literatur}

\end{document}